\newcommand{\scr}[1]{\mathscr{#1}}
\newcommand{\frk}[1]{\mathfrak{#1}}
\newcommand{\bb}[1]{\mathbb{#1}}
\renewcommand{\rm}[1]{\mathrm{#1}}
\newcommand{\N}{\mathbb{N}}	
\newcommand{\R}{\mathbb{R}}	
\newcommand{\HH}{\mathbb{H}}	
\newcommand{\E}{\mathbb{E}}	
\newcommand{\Co}{\mathscr{C}}	
\renewcommand{\Vec}{\mathrm{Vec}}	
\newcommand{\Span}{\mathrm{span}}	
\newcommand{\ssubset}{\subset\subset}	
\newcommand{\dd}{\,\mathrm{d}}	
\newcommand{\de}{\partial}		
\newcommand{\LL}{\mathtt{L}}	
\newcommand{\LI}{\mathtt{L^{\infty}}}	
\newcommand{\THEN}{\Rightarrow}	
\newcommand{\IFF}{\Leftrightarrow}	
\newcommand{\ddx}{\framebox[\width]{$\Rightarrow$} }
\newcommand{\ssx}{\framebox[\width]{$\Leftarrow$} }
\renewcommand{\Im}{\mathrm{Im}}
\theoremstyle{plain}
\newtheorem{Prop}{Proposition}[section]
\newtheorem{Teo}[Prop]{Theorem}
	\newtheorem*{Teo*}{Theorem}
\newtheorem{Lem}[Prop]{Lemma}
\newtheorem{Cor}[Prop]{Corollary}
\theoremstyle{definition}
\newtheorem{Def}[Prop]{Definition}
\newtheorem{Rem}[Prop]{Remark}
\newtheoremstyle{esempio}
	{3pt}
	{3pt}
	{}
	{}
	{}
	{:}
	{.5em}
	{}
\theoremstyle{esempio}
\newtheorem{ese}[Prop]{\textsc{Example}}
\newtheorem{exe}[Prop]{\textsc{Exercise}}
\theoremstyle{remark}
\newcommand{\dcc}{d_{CC}}
\newcommand{\diam}{\rm{diam}}
\newcommand{\f}{\mathtt{f}}
\newcommand{\LUI}{\LI([0,1];\E)}
\newcommand{\End}{\mathtt{End}}
\DeclareMathOperator*{\esup}{ess\,sup}
\newcommand{\wsto}{\overset*{\rightharpoonup}} 
\renewcommand{\HH}{\bb H}	
\newcommand{\h}{\frk h}
\newcommand{\g}{\frk g} 	
\newcommand{\str}{\tau} 
\newcommand{\Cone}{\mathtt{Cone}}
\newcommand{\Ad}{\rm{Ad}}
\newcommand{\ad}{\rm{ad}}
\newcommand{\Int}{\rm{int}}
\title[Regularity of spheres]{Regularity properties of spheres in homogeneous groups}
\author[Le Donne]{Enrico Le Donne}
\author[Nicolussi Golo]{Sebastiano Nicolussi Golo}
\thanks{S.N.G.~has been supported by the People Programme (Marie Curie Actions) of the European Union’s Seventh Framework Programme FP7/2007-2013/ under REA grant agreement n. 607643.}
\address[Le Donne, Nicolussi Golo]{University of Jyvaskyla, Finland}%
\date{\today}
\subjclass[2010]{%
28A75, 
22E25,  
53C60, 
53C17, 
26A16, 
}
\keywords{sub-Finsler geometry, %
sub-Riemannian geometry, %
Carnot groups, %
Lipschitz regularity, %
abnormal geodesic, %
Heisenberg group%
}
\begin{document}
\maketitle

\begin{abstract} 
	We study left-invariant distances on Lie groups for which
	there exists a one-parameter family of homothetic automorphisms.
	The main examples are Carnot groups,
	in particular the Heisenberg group with the standard dilations.	
	We are interested in criteria implying 
	that, locally and away from the diagonal, the distance is Euclidean Lipschitz 
	and, consequently,
	that the metric spheres are boundaries of Lipschitz domains in the Euclidean sense.
	In the first part of the paper, we consider geodesic distances. 
	In this case, we actually prove the regularity of the distance in the more general context of sub-Finsler manifolds with no abnormal geodesics.
	Secondly, for general groups we identify an algebraic criterium
	in terms of the dilating automorphisms, which for example makes us
	conclude the regularity of homogeneous distances 
	on the Heisenberg group.
	In such a group, we analyze in more details the geometry of metric spheres.
	We also provide examples
	of homogeneous groups
	 where spheres presents cusps.
\end{abstract}

\phantomsection
\addcontentsline{toc}{section}{Contents}
\setcounter{tocdepth}{2}
\tableofcontents

\section{Introduction}

The study of the asymptotic geometry of groups lead us to investigate   spheres in homogeneous groups, examples of which are asymptotic cones of  finitely generated nilpotent groups. 
%
A homogeneous group is a Lie group $G$ endowed with a family of Lie group automorphisms $\{\delta_\lambda\}_{\lambda>0}$ and a left-invariant distance $d$ for which each $\delta_\lambda$ multiplies the distance by $\lambda$, 
 see Section~\ref{subs08291614}.
%
An algebraic characterization of these groups is known  by \cite{MR812604}.
In fact, 
the Lie algebra $\g$ of $G$ admits a \emph{grading},
i.e., a decomposition $\g=\bigoplus_{i\geq1} V_i$ such that $[V_i,V_j]\subset V_{i+j}$.
For simplicity, we assume that
 the dilations are 
the ones induced by the grading. Namely, the 
dilation of factor $\lambda$ {\em relative} to the grading
 is the one
such that
$(\delta_\lambda)_*(v)=\lambda^i v$ for all $v\in V_i$.
Not all homogeneous metrics admit this type of dilations.
We denote by $0$ the neutral element of   $G$ and by $\bb S_d$ the unit sphere at 0 for a distance $d$ on $G$, i.e., $\bb S_d:=\{p\in G:d(0,p)=1\}$.

In this paper we want to exclude  cusps in spheres
since their presence in the asymptotic cone 
of a finitely generated nilpotent group 
may give a 
slower
 rate of convergence in the blow down, see \cite{MR3153949}.
We
find criteria 
implying that the metric spheres are boundaries of Lipschitz domains
and in fact that 
the distance function from a point is a locally Lipschitz function with respect to a Riemannian metric.
\\

First, we address the case where the distance $d$ is a length distance. 
Thanks to a characterization of Carnot groups, see \cite{MR3283670}, the group $G$ is in this case a stratified group and $d$ is a sub-Finsler distance.
Being a \emph{stratified group} means that the grading of $\g$ 
is such that the first layer $V_1$ generates $\g$. 
Being a \emph{sub-Finsler distance} means that there are a left-invariant subbundle $\Delta\subset TG$ and a left-invariant norm $\|\cdot\|$ on $\Delta$ such that the length induced by $d$ of an absolutely continuous curve $\gamma:[0,1]\to G$ is equal to 
$
\int_0^1 \|\gamma'(t)\| \dd t ,
$
where $\|\gamma'(t)\|=+\infty$ if $\gamma'(t)\notin\Delta$.
The left-invariant subbundle $\Delta$ is actually the one generated by $V_1$.

In the sub-Finsler case, 
an obstruction to Lipschitz regularity of the sphere comes from the presence of 
length-minimizing curves (also called geodesics) that are not regular, in the sense that
the first variation parallel to the subbundle $\Delta$ does not have maximal rank, see Definition~\ref{def07101125}.


\begin{Teo}\label{teo04221818}
	Let $G$ be a stratified group endowed with a sub-Finsler metric $d$.
	Let $d_0:G\to[0,+\infty)$, $p\mapsto d(0,p)$.
	Let $p\in G$ be such that all geodesics from $0$ to $p$ are regular.
	Then, for any Riemannian metric $\rho$ on $G$ the function $d_0$ is Lipschitz with respect to $\rho$ in some neighborhood of $p$.
\end{Teo}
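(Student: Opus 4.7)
The plan is to derive the Lipschitz bound from an implicit-function-theorem construction at a regular minimizing control, combined with a compactness argument that propagates regularity to a neighborhood of $p$. Write $E\colon\LI([0,1];V_1)\to G$ for the endpoint map sending a control $u$ to the endpoint at time $1$ of the horizontal curve starting at $0$, so that $d_0(q)=\inf\{\int_0^1\|u(t)\|\dd t : E(u)=q\}$. The hypothesis then says that for every minimizing control $u_0$ with $E(u_0)=p$ the differential $dE_{u_0}$ is surjective onto $T_pG$.

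First I would fix one such $u_0$ and apply the surjection form of the implicit function theorem to the smooth map $E$: this produces a neighborhood $U$ of $p$ in $G$ and a $C^1$ section $\sigma\colon U\to \LI$ with $\sigma(p)=u_0$ and $E\circ\sigma=\Id_U$, whose Lipschitz constant from $(U,\rho)$ into $(\LI,\|\cdot\|_{\LI})$ is controlled by the operator norm of a right inverse of $dE_{u_0}$. The auxiliary function $f(q):=\int_0^1\|\sigma(q)(t)\|\dd t$ is then Lipschitz, dominates $d_0$ on $U$, and equals $d_0(p)$ at $p$; this already yields the one-sided bound $d_0(q)-d_0(p)\leq C\rho(q,p)$ on $U$.

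To upgrade this bound to two-sided Lipschitz on a neighborhood, I would argue by contradiction. Assume $d_0$ is not $\rho$-Lipschitz on any neighborhood of $p$; then there are sequences $q_n,q_n'\to p$ with $d_0(q_n)-d_0(q_n')\geq n\,\rho(q_n,q_n')$. Choose constant-speed minimizing controls $u_n$ with $E(u_n)=q_n$. Since $\int\|u_n\|\dd t=d_0(q_n)\to d_0(p)$, the $u_n$ are uniformly bounded in $\LI$, so a subsequence converges weakly-$*$ to some $u_\infty$; by Arzel\`a--Ascoli applied to the associated horizontal curves together with lower semicontinuity of the length, $u_\infty$ is a minimizing control from $0$ to $p$. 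By hypothesis $dE_{u_\infty}$ is surjective, and granting that this persists to nearby $u_n$ with right-inverses of uniformly bounded norm $K$, repeating the construction above at each $u_n$ gives $d_0(q_n')\leq d_0(q_n)+K\rho(q_n,q_n')$, contradicting the choice of $(q_n,q_n')$ once $n>K$.

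The main technical obstacle is precisely the stability of surjectivity of $dE_u$ under the weak-$*$ convergence $u_n \wsto u_\infty$. Since $E$ and $dE$ depend on $u$ through the solution of a nonlinear ODE driven by $u$, pointwise continuity of $u\mapsto dE_u$ is not automatic from weak-$*$ convergence of controls alone. In the stratified setting one should be able to resolve this by exploiting the affine-in-control form of the horizontal ODE: bounded weakly-$*$ convergent controls yield locally uniformly convergent solution curves, from which one should extract enough regularity of $u\mapsto dE_u$ along the sequence to conclude openness of surjectivity at $u_\infty$ and uniform bounds on the corresponding right inverses.
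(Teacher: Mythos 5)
Your overall architecture is the same as the paper's (quantitative inverse function theorem at a regular minimizing control, weak-$*$ compactness of minimizing controls, and propagation of regularity to nearby points), but the step you explicitly ``grant'' --- that surjectivity of $\dd\End_0$ persists along the weakly-$*$ convergent sequence $u_n\wsto u_\infty$ \emph{with right inverses of uniformly bounded norm} --- is precisely where the real work lies, and as stated your sketch does not close it. Weak-$*$ convergence of controls does not give norm convergence of $u\mapsto\dd\End_0(u)$, so surjectivity is not an open condition you can invoke by naive perturbation; what is actually true, and what the paper proves, is that the \emph{minimal stretching} $u\mapsto\tau(\dd\End_0(u))$ of Definition~\ref{def05111117} is weakly-$*$ lower semicontinuous (Corollary~\ref{cor06021214}). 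Establishing this requires two ideas absent from your outline: first, the differential of the end-point map is itself the end-point map of a doubled control system (Proposition~\ref{prop06021149}), so the weak-$*$ continuity of end-point maps (Proposition~\ref{prop05111227}, your ``uniformly convergent solution curves'') transfers to the differential; second, surjectivity of $\dd\End_0(u)$ with a given stretching can be witnessed on a \emph{finite-dimensional} subspace $W$ of controls, on which weak-$*$ convergence upgrades to strong convergence of the restricted operators $\dd\End_0(u_k)|_W$, yielding the lower semicontinuity and hence the uniform right-inverse bound. Without this, both your contradiction argument and the uniformity of the radius on which the implicit-function section $\sigma$ is defined are unsupported. (The paper packages this as Lemmas~\ref{lem06021600} and~\ref{lem05251454}: one first shrinks the neighborhood so that \emph{all} minimizing controls into it are regular with $\tau$ uniformly bounded below, then gets a uniform one-sided pointwise bound and chains it along $\rho$-geodesics.)

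Two smaller points. In your contradiction step the minimizing controls should be attached to the point with the \emph{smaller} value of $d_0$: from a section around a minimizer $u_n$ ending at $q_n'$ you get $d_0(q_n)\le d_0(q_n')+K\rho(q_n,q_n')$, which contradicts $d_0(q_n)-d_0(q_n')\ge n\,\rho(q_n,q_n')$; as written, your inequality $d_0(q_n')\le d_0(q_n)+K\rho(q_n,q_n')$ is consistent with the assumed blow-up and contradicts nothing. Also, even granting uniform right inverses, you must check that the radius $\hat\epsilon_{u_n}$ on which the open-mapping inclusion holds does not shrink faster than $\rho(q_n,q_n')$; this follows from uniform bounds on the second derivative of $\End_0$ on the norm-bounded set of minimizers together with the uniform lower bound on $\tau$, but it needs to be said.
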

We will actually state and prove Theorem~\ref{teo04221818} in the more general setting of \emph{sub-Finsler manifolds of  constant-type norm}, see Section~\ref{subs07101127}.

 In   case of homogeneity, the regularity of  the distance implies also the regularity of the spheres.
Hence, 
using Theorem~\ref{teo04221818}
we easily get the second   result
for sub-Finsler homogeneous groups.
\begin{Teo}\label{teo04221825}
	Let $G$ be a stratified group endowed with a sub-Finsler metric $d$.
	Let $p\in\bb S_d$ be such that all geodesics from $0$ to $p$ are regular.
	Then, in smooth coordinates,
the set $\bb S_d$ is a Lipschitz graph
in some neighborhood of $p$.
	In particular, if all non-constant geodesics are regular, then metric balls 
	are 
	Lipschitz domains.
\end{Teo}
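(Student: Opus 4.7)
The plan is to deduce this statement from Theorem~\ref{teo04221818} by exploiting the dilation structure of $G$. First, I would fix a left-invariant Riemannian metric $\rho$ on $G$; by the hypothesis on geodesics from $0$ to $p$, Theorem~\ref{teo04221818} provides that $d_0 := d(0,\cdot)$ is $\rho$-Lipschitz on an open neighborhood $U$ of $p$. In any smooth chart near $p$, $\rho$-Lipschitz is equivalent to Euclidean Lipschitz, so the remaining task is to convert this Lipschitz behavior of $d_0$ into a Lipschitz graph description of the level set $\bb S_d = \{d_0 = 1\}$, using the dilations to provide a canonical transverse direction.

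To this end, I would work with the smooth vector field $X$ on $G \setminus \{0\}$ whose flow at time $t$ is $\delta_{e^t}$; notice $X_p \neq 0$ since $p \neq 0$. Picking a smooth hypersurface $T \ni p$ transverse to $X$ at $p$, the map
\[
\Psi \colon T \times (-\varepsilon, \varepsilon) \longrightarrow G, \qquad (q,t) \longmapsto \delta_{e^t}(q),
\]
is a smooth diffeomorphism onto a neighborhood $U' \subset U$ of $p$ once $T$ and $\varepsilon > 0$ are small enough. Homogeneity of $d$ then gives $d_0\bigl(\Psi(q,t)\bigr) = e^t\, d_0(q)$; after shrinking $T$ so that $d_0 > 0$ on it (possible by continuity, since $d_0(p) = 1$), the sphere inside $U'$ is exactly the graph
\[
\bb S_d \cap U' = \Psi\bigl(\{(q,t) \in T \times (-\varepsilon,\varepsilon) : t = -\log d_0(q)\}\bigr).
\]

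Finally, the graph function $q \mapsto -\log d_0(q)$ is Lipschitz on $T$, because $d_0$ is $\rho$-Lipschitz on $U \supset T$ and bounded away from $0$ there, while $-\log$ is smooth on $(0,\infty)$. This yields the Lipschitz graph description in the smooth coordinates supplied by $\Psi$. For the final assertion, if every non-constant minimizing curve from $0$ is regular, then the hypothesis of the theorem holds at each $p \in \bb S_d$, and the same construction produces a Lipschitz graph description around every sphere point; moreover, in the adapted coordinates the closed unit ball becomes the subgraph $\{(q,t) : t \leq -\log d_0(q)\}$, so the ball lies locally on one side of its boundary and is therefore a Lipschitz domain. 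The substantial work sits entirely in Theorem~\ref{teo04221818}; once that is granted, the present statement follows as a direct consequence of the dilation structure, and no serious obstacle remains.
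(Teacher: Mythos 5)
Your proposal is correct and follows essentially the same route as the paper: the paper's Remark~\ref{rem09051148} is exactly your diffeomorphism $\Psi$ (there realized globally as $\phi(p,t)=\delta_{1/t}p$ on $S\times(0,\infty)$ with $S$ a smooth Euclidean sphere transverse to the dilations), and Lemma~\ref{lem09051217} records the same equivalence you use between local Lipschitzness of $d_0$ and the sphere being a Lipschitz graph in the dilation direction. The only differences are cosmetic (a local transversal $T$ and the graph function $-\log d_0$ instead of $d_0$ itself).
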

   Notice that 
a ball may be a Lipschitz domain even if the distance from a point is not Lipschitz (we give an example in Remark \ref{rem09051840}).
In Section~\ref{sec07101128} we also present examples of sub-Riemannian and sub-Finsler distances whose balls have a cusp.
\\

At a second stage, we drop the hypothesis of $d$ being a length distance and we present a result similar to the previous Theorem~\ref{teo04221825} in the context of homogeneous groups.
Hereafter we denote by $L_p$ and $R_p$ the left and the right translations on $G$, respectively, and by $\bar\delta(p)$ the vector $\left.\frac{\dd}{\dd t} \delta_t(p)\right|_{t=1}\in T_pG$, where  $\{\delta_t\}_{t>0}$ are the dilations relative to a grading.
\begin{Teo}\label{teo04221835}
	Let $(G,d)$ be a homogeneous group with dilations relative to a grading, see Definition~\ref{def08291617}.
	Assume $p\in\bb S_d$ is such that 
	\begin{equation}\label{eq04221835}
		\dd L_p(V_1) + \dd R_p(V_1) + \Span\{\bar \delta(p)\} = T_pG .
	\end{equation}	
	Then, in some neighborhood of $p$ we have that 
	the sphere $\bb S_d$ is a Lipschitz graph    
	and the distance $d_0$ from the identity is Lipschitz with respect to any Riemannian metric $\rho$.
\end{Teo}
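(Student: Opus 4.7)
The strategy is to control $d_0$ near $p$ by representing nearby points as compositions of left $V_1$-translations, right $V_1$-translations, and dilations, each of whose effect on $d_0$ is explicitly controlled by the triangle inequality together with the left-invariance and the homogeneity of $d$.

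Consider the smooth map $F:V_1\times V_1\times(0,+\infty)\to G$ defined by
\[
F(v,w,\lambda):=\exp(w)\cdot\delta_\lambda(p)\cdot\exp(v).
\]
Then $F(0,0,1)=p$ and a direct computation gives
\[
\dd F_{(0,0,1)}(\dot v,\dot w,\dot\lambda) = \dd L_p(\dot v) + \dd R_p(\dot w) + \dot\lambda\,\bar\delta(p),
\]
so \eqref{eq04221835} is exactly the statement that $\dd F_{(0,0,1)}$ is surjective onto $T_pG$. Since surjectivity of the differential is an open condition on the base point, the analogous map $F_q(v,w,\lambda):=\exp(w)\delta_\lambda(q)\exp(v)$ remains a submersion at $(0,0,1)$ for every $q$ in some open neighborhood $U$ of $p$. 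The implicit function theorem with parameters, applied after fixing a linear complement of $\ker \dd F_{(0,0,1)}$, then produces a $C^1$ map $\Sigma:\Omega\to V_1\times V_1\times(0,+\infty)$, defined on an open neighborhood $\Omega$ of $(p,p)$ in $U\times U$, satisfying $\Sigma(q,q)=(0,0,1)$ and $F_q(\Sigma(q,q'))=q'$.

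The key estimate is a double application of the triangle inequality. If $q'=\exp(w)\delta_\lambda(q)\exp(v)$, then by left-invariance
\[
d_0(q') \leq d(0,\exp w) + d(0,\delta_\lambda(q)) + d(0,\exp v) = |w|_d + \lambda\,d_0(q) + |v|_d,
\]
where $|u|_d:=d(0,\exp u)$; applying the same estimate to $\delta_\lambda(q)=\exp(-w)\,q'\,\exp(-v)$ yields the reverse bound, giving
\[
|d_0(q') - \lambda\,d_0(q)| \leq |v|_d + |w|_d.
\]
Since $\delta_\lambda(\exp u)=\exp(\lambda u)$ for $u\in V_1$, the function $|\cdot|_d$ is continuous and positively $1$-homogeneous on the finite-dimensional space $V_1$, so $|u|_d\leq C|u|_\rho$ for some $C>0$. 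Combining this with $|d_0(q')-d_0(q)|\leq|d_0(q')-\lambda d_0(q)|+d_0(q)|\lambda-1|$, the local boundedness of $d_0$, and the joint Lipschitz regularity of $\Sigma$ near the diagonal (which bounds $|v|_\rho+|w|_\rho+|\lambda-1|$ by a constant multiple of $\|q-q'\|_\rho$), one obtains the uniform bound $|d_0(q)-d_0(q')|\leq L\|q-q'\|_\rho$ on a neighborhood of $p$.

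For the Lipschitz graph property of $\bb S_d$, note that $\bar\delta(p)\neq 0$ because $p\neq 0$ makes the dilation orbit $t\mapsto\delta_t(p)$ nonconstant. Pick a smooth hypersurface $T$ through $p$ transverse to $\bar\delta(p)$; the map $(z,t)\mapsto\delta_t(z)$ is then a diffeomorphism from a neighborhood of $(p,1)$ in $T\times(0,+\infty)$ onto a neighborhood of $p$ in $G$. The identity $d_0(\delta_t(z))=t\,d_0(z)$ shows that $\bb S_d$ is locally the graph $\{(z,1/d_0(z)):z\in T\cap U\}$ in these smooth coordinates, and since $d_0|_T$ is Lipschitz and bounded away from $0$ near $p$, so is $1/d_0|_T$, giving the Lipschitz graph. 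I expect the principal technical obstacle to be the passage from the pointwise estimate at $p$ to the uniform Lipschitz bound on a whole neighborhood, which forces the use of the parametric implicit function theorem to produce $\Sigma$ with joint regularity in $(q,q')$ rather than only the fiberwise inverse at $p$.
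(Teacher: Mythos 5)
Your argument is correct, and it rests on the same key construction as the paper's: a map composing right translations by elements of $\exp(V_1)$, a dilation of the base point, and left translations by elements of $\exp(V_1)$, so that condition \eqref{eq04221835} is exactly surjectivity of its differential at the base point, together with the triangle inequality and homogeneity to control $d_0$ on its image. The execution, however, runs in the opposite direction. The paper never inverts the map: it pushes the simplex $\{s+\sum_i(|u_i|+|v_i|)\le 1\}$ forward through the analogous map $\phi_q(u,s,v)=\delta_{u_1}p_1\cdots\delta_{u_r}p_r\cdot\delta_sq\cdot\delta_{v_1}p_1\cdots\delta_{v_r}p_r$ and, via the cone lemma (Lemma~\ref{lem05291640}), extracts a uniform cone with axis $\bar\delta(q)$ that is contained in $\bar B_d(0,1)$ and meets $\bb S_d$ only at $q$, for every $q\in\bb S_d$ near $p$; this establishes the Lipschitz-graph property of the sphere first, and the Lipschitz estimate for $d_0$ is then obtained from the equivalence in Lemma~\ref{lem09051217}. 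You instead invert the map by the parametric implicit function theorem to produce the section $\Sigma(q,q')$, derive the Lipschitz bound on $d_0$ directly from $|d_0(q')-\lambda\,d_0(q)|\le |v|_d+|w|_d$ and the Lipschitz regularity of $\Sigma$ off the diagonal value $\Sigma(q,q)=(0,0,1)$, and only afterwards read off the graph property --- i.e., you traverse Lemma~\ref{lem09051217} in the reverse direction. Each route buys something: yours keeps $\lambda>0$ throughout (so homogeneity applies without extending the dilations to non-positive parameters, as the paper does for smoothness of $(q,\lambda)\mapsto\delta_\lambda q$) and yields a Lipschitz constant explicitly controlled by the Lipschitz constant of $\Sigma$, i.e., by the minimal stretching of the differential; the paper's cone pushforward avoids the inversion entirely and delivers the geometrically stronger uniform interior-cone condition for the ball along $\bb S_d$ near $p$, which is what is reused in the Heisenberg analysis of Section~\ref{sec05261301}.
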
 

The similarity between Theorem~\ref{teo04221825} and Theorem~\ref{teo04221835} consists in the fact that, if $d$ is a sub-Finsler distance, then condition \eqref{eq04221835} implies that all geodesics from $0$ to $p$ are regular, see Remark~\ref{rem07101129}.\\

The equality \eqref{eq04221835} or the absence of  non-regular  geodesics are actually quite strong conditions.
However, in general we can give an upper bound for the Hausdorff dimension of spheres.
In fact, if $d$ is a homogeneous distance on a graded group of maximal degree $s$, then
\begin{equation}\label{eq04282051}
 	\dim_H^\rho(G) -1 \le \dim_H^\rho(\bb S_d) \le \dim_H^\rho(G)-\frac1s ,
\end{equation}
where $\dim_H^\rho$ is the Hausdorff dimension with respect to some (therefore any) Riemannian metric $\rho$.
We show with Proposition~\ref{prop05281759} that this estimate is sharp.
\\

In the last part of the paper, we analyze in more details an important specific example: the Heisenberg group.
In this graded group we consider all possible homogeneous distances and prove that in exponential coordinates
\begin{enumerate}[label=(\roman*)]
\item 	the unit ball is a star-shaped Lipschitz domain (Proposition~\ref{prop05291014});
\item 	the unit sphere is a locally Lipschitz graph with respect to the direction of the center of the group (Proposition~\ref{prop05291027}).
\end{enumerate}
We also give a method to construct homogeneous distances in the Heisenberg group with arbitrary Lipschitz regularity of the sphere.
Namely, the graph of each Lipschitz function defined on the unit disk, up to adding to it a constant, is the sphere of some homogeneous distance, see Proposition~\ref{prop05291028}.
The investigation of this class of examples is meaningful in connection to Besicovitch's covering property as studied in \cite{2014arXiv1406.1484L} and \cite{LeDonne_Rigot_rmknobcp}. 
\\

The paper is organized as follows.
In section~\ref{sec08291629} we will present all preliminary notions needed in the paper.
We introduce sub-Finsler manifolds of constant-type norm, graded and homogeneous groups and Carnot groups.
Section~\ref{sec08291633} is devoted to the proof of Theorem~\ref{teo04221818};
first in the setting of sub-Finsler manifolds, see Theorem~\ref{teo04241155} proved in Section~\ref{subs07122224}, 
then with a
 more specific result for Carnot groups, see Proposition~\ref{prop06031443}.
 In Section~\ref{sec05111258} we
see    metric spheres as  graphs over  smooth spheres.
Hence, we show Theorem~\ref{teo04221825},   the inequalities \eqref{eq04282051}, and Theorem~\ref{teo04221835}.
In Section~\ref{sec07101128} we present six examples: three different grading of $\R^2$, the Heisenberg group, a sub-Finsler sphere with a cusp and a sub-Riemannian sphere with a cusp.
In Section~\ref{sec05261301} we prove stronger properties for spheres of homogeneous distances on the Heisenberg group.

\section{Preliminaries}\label{sec08291629}
\subsection{Sub-Finsler structures}\label{subs07101127}
Let $M$ be a manifold of dimension $n$.
We will write $TM$ for the tangent bundle and $\Vec(M)$ for the space of smooth vector fields on $M$.
\begin{Def}[Sub-Finsler structure]
	A \emph{sub-Finsler structure (of constant-type norm) of rank $r$} on a manifold $M$ is a triple $(\E, \|\cdot\|, \f)$, where $(\E,\|\cdot\|)$ is a normed vector space of dimension $r$ and $\f:M\times\E\to TM$ is a smooth bundle morphism with $\f(\{p\}\times\E) \subset T_pM$, for all $p\in M$.
\end{Def}

\begin{Def}[Horizontal curve]
	A curve $\gamma:[0,1]\to M$ is a \emph{horizontal curve} if it is absolutely continuous and there is $u:[0,1]\to\E$ measurable, which is called \emph{a control of $\gamma$}, such that 
	\[
	\gamma'(t) = \f(\gamma(t), u(t))
	\qquad\text{for a.e. } t\in[0,1].
	\]
	In this case $\gamma$ is called \emph{integral curve} of $u$ and we write $\gamma_u$.
\end{Def}

\begin{Def}[Space of controls]
	The space of \emph{$\LI$-controls} is defined as\footnote{
	Among the three norms $\LL^1$, $\LL^2$ and $\LL^\infty$ for controls, we chose the latter because 
	the unit ball in $\LL^1([0,1];\E)$ is not weakly compact
	and the $\LL^2$-space is not a Hilbert space in our contest.
	}
	\[
	\LI([0,1];\E) := \left\{ u:[0,1]\to \E\text{ measurable, } \esup_{t\in[0,1]}\|u(t)\| < \infty \right\} .
	\]
	This is a Banach space with norm $\|u\|_{\LI} := \esup_{t\in[0,1]}\|u(t)\|$.
\end{Def}

Thanks to known results for ordinary differential equations, see \cite{MR3308395},
given a control $u\in\LUI$, there is 
an open neighborhood $M^*\subset M\times[0,1]$ of $M\times\{0\}$
and a map $\Phi^u:M^*\to M$,
such that for all $p\in M$ the set $M^*\cap(\{p\}\times[0,1])$ is connected and contains $(p,0)$, and the curve $\gamma(t):=\Phi^u(p,t)$ is the unique solution to the Cauchy problem
\[
\begin{cases}
 	\gamma(0)=p \\
	\gamma'(t) = \f(\gamma(t),u(t)) &\text{for a.e.}~t\text{ with }(p,t)\in M^*.
\end{cases}
\]
The map $\Phi^u$ is called \emph{flow} of $u$.

\begin{Rem}\label{rem06011524}
	We will always assume that every $u\in\LUI$ has a flow defined on the entire interval $[0,1]$, i.e., $M^*=M\times[0,1]$.
	This happens in many cases, for example for left-invariant sub-Finsler structures on Lie groups, in particular in Carnot groups.
\end{Rem}

\begin{Def}[End-point map]
	Fix $o\in M$.
	Define 
	the \emph{End-point map} with \emph{basis point} $o$, $\End_o:\LUI\to M$, as
	\[
	\End_o(u) = \Phi^u(o,1) .
	\]
\end{Def}

By standard result of ODE the map $\End_o$ is of class $\Co^1$, see \cite{MR3308395}.

\begin{Def}[Regular curves]\label{def07101125}
	Given $o\in M$, a control $u\in\LUI$ is said to be 
	\emph{regular} if it is a regular point of $\End_o$, i.e., if 
	$\dd\End_o(u):\LUI\to T_{\End_o(u)}M$ is surjective.
	A \emph{singular control} is a control that is not regular.
\end{Def}

\begin{Def}[Sub-Finsler distance]
	The \emph{sub-Finsler distance}, also called \emph{Carnot-Carathéodory distance}, 
	between two points $p,q\in M$ is 
	\[
	d(p,q) := \inf\left\{\int_0^1\|u(t)\|\dd t:u\in\LUI\text{ with }\End_p(u)=q\right\} .
	\]
\end{Def}
Clearly $(M,d)$ is a metric space, even though it might happen $d(p,q)=\infty$.
Let $\ell_d(\gamma)$ be the     length of a curve $\gamma$ with respect to $d$, see \cite{MR2039660}.
It can be proven that a  curve $\gamma:[0,1]\to (M,d)$ is Lipschitz if and only if it is horizontal and it admits a control in $\LUI$. 
Moreover, if $\gamma$ is Lipschitz, then
\[
\ell_d(\gamma) = \inf\left\{ \int_0^1\|u(t)\|\dd t : u\in\LUI\text{ control of $\gamma$} \right\}.
\]
We will use the term \emph{geodesic} as a synonym of \emph{length-minimizer}.

The distance can be expressed by using the $\LI$-norm as an energy, i.e., 
for every $p,q\in M$
\[
d(p,q) = \inf\left\{\|u\|_{\LI}:u\in\LUI\text{ with }\End_p(u)=q\right\} .
\]
Moreover, if $u$ realizes the infimum above, then its integral curve $\gamma_u$ starting from $p$ is a length-minimizing curve parametrized by constant velocity, i.e., 
\[
d(p,q) = \|u\|_{\LI} = \ell_d(\gamma_u) = \|u(t)\| ,
\qquad\text{for a.e.}t\in[0,1] .
\]

\begin{Def}[Bracket-generating condition]
	Let $\scr A$ be the Lie algebra generated by the set 
	\[
	\left\{p\mapsto \f(p,X(p)): X:M\to\E\text{ smooth}\right\} \subset \Vec(M) .
	\]
	We say that the sub-Finsler structure $(\E,\|\cdot\|,\f)$ on $M$ satisfies the \emph{bracket-generating condition} if for all $p\in M$
	\[
	\{V(p):V\in \scr A\} = T_pM .
	\]
\end{Def}

As a consequence of the Orbit Theorem \cite{MR1425878}, we have the following basic well-known fact.
\begin{Lem}
	If $(\E,\|\cdot\|,\f)$ satisfies the bracket-generating condition, then the distance $d$ induces the original topology of $M$ and $(M,d)$ is a locally compact and locally geodesic length space.
\end{Lem}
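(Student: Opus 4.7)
The statement bundles together three classical facts for sub-Finsler structures: (i) the metric $d$ is topologically equivalent to the manifold topology, (ii) $(M,d)$ is locally compact, and (iii) short enough pairs of points are joined by minimizing curves, and the metric is intrinsic. The plan is to treat them in this order.

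\medskip

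\noindent\textbf{Topological equivalence.} The plan is to invoke the Chow--Rashevskii theorem, which follows from the Orbit Theorem already cited: since $\scr A$ spans $T_pM$ at every $p$, the orbits of the control system are open and, by connectedness of $M$, equal to $M$, so $d(p,q)<\infty$ for all $p,q$. To see that $d$ induces the manifold topology, one picks a local smooth frame of $\f$-images of sections of $\E$ and their iterated brackets, uses a ``box-ball'' estimate of the form $d(p,q)\lesssim \rho(p,q)^{1/s}$ in a coordinate neighborhood (where $s$ is the step needed to span $T_pM$), while the opposite inequality $\rho(p,q)\lesssim d(p,q)$ is immediate from $\|\f(p,v)\|_\rho\lesssim \|v\|$. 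Item (ii) is then free, since topological equivalence transports local compactness from $M$ to $(M,d)$.

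\medskip

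\noindent\textbf{Length-space property.} The plan is to show that $d$ agrees with its associated length distance. By the remark already in the excerpt, every $d$-Lipschitz curve admits an $\LI$-control whose $\LL^1$-integral bounds its $d$-length, and conversely the integral curve of any admissible control is $d$-Lipschitz with length $\leq \int_0^1 \|u(t)\|\dd t$. Hence
\[
d(p,q)=\inf\left\{\int_0^1\|u(t)\|\dd t:\End_p(u)=q\right\}=\inf\{\ell_d(\gamma):\gamma\text{ joins }p\text{ to }q\},
\]
which is the definition of a length space.

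\medskip

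\noindent\textbf{Local existence of geodesics.} This is the step I expect to be the crux. Given $p\in M$, fix a compact neighborhood $K\ni p$ in which the topologies agree, and let $q\in K$ be close enough to $p$. Pick a minimizing sequence $u_n\in\LUI$ with $\End_p(u_n)=q$ and $\|u_n\|_{\LI}\to d(p,q)$. The integral curves $\gamma_n:=\Phi^{u_n}(p,\cdot)$ are equi-Lipschitz with respect to any auxiliary Riemannian metric $\rho$, because $\|\f(\gamma_n,u_n)\|_\rho$ is bounded on the compact set $K$ (and by the standing assumption of Remark~\ref{rem06011524} they are defined on all of $[0,1]$; after further restricting $K$ they remain inside a compact set). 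By Arzelà--Ascoli, along a subsequence $\gamma_n\to\gamma$ uniformly. By Banach--Alaoglu, along a further subsequence $u_n\overset{*}{\rightharpoonup}u$ in $\LI([0,1];\E)$, and the $\LI$-norm is weak-$*$ lower semicontinuous, so $\|u\|_{\LI}\leq d(p,q)$. The main technical point is to identify $u$ as a control of $\gamma$: one writes $\gamma_n(t)=p+\int_0^t\f(\gamma_n(s),u_n(s))\dd s$ in local coordinates, and uses that $\f$ is linear in the second variable together with uniform convergence $\gamma_n\to\gamma$ to pass to the limit in the nonlinear term $\f(\gamma_n(s),u_n(s))$; this is the standard closedness lemma for linear-in-control ODEs (convergence of $\gamma_n$ strong in $\LI$ times $u_n$ weak-$*$ in $\LI$). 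Thus $\gamma$ is horizontal with control $u$, joins $p$ to $q$, and has $\LI$-norm $\leq d(p,q)$, so equality holds and $\gamma$ is a minimizing geodesic.

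\medskip

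The main obstacle is precisely this closedness argument under weak-$*$ convergence of controls, which requires the linearity of $\f$ in $v$ (built into the definition) and a careful use of the uniform convergence of $\gamma_n$ to handle the state dependence. Once that is in place, the three items combine to give the lemma.
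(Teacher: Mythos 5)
Your proposal is correct and is essentially the standard argument that the paper is implicitly invoking: the paper gives no proof, simply citing the Orbit Theorem and calling the statement a ``basic well-known fact,'' and your three steps (Chow--Rashevskii plus the ball-box estimate for topological equivalence, the identification of $d$ with its induced length metric via controls, and the direct-method existence of minimizers) are exactly the classical route. The crux step you identify --- closedness of horizontal curves under uniform convergence of states and weak-$*$ convergence of controls, combined with Banach--Alaoglu and weak-$*$ lower semicontinuity of the $\LI$-norm --- is precisely the machinery the paper itself develops later (Proposition~\ref{prop05111227} and Lemma~\ref{lem06021600}), so your argument is fully consistent with the paper's toolbox.
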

By the Hopf-Rinow Theorem, see \cite{Bur01A-course-in-metric-geometry}, the assumption in Remark~\ref{rem06011524} implies that $(M,d)$ is a complete, boundedly compact metric space.

\subsection{Graded groups}\label{subs08291614}
All Lie algebras considered here are over $\R$ and finite-dimensional.
\begin{Def}[Graded group]
	A Lie algebra $\g$ is \emph{graded} if it is equipped with \emph{grading}, i.e., with a vector-space decomposition 
	$\g=\bigoplus_{i>0} V_i$, where $i>0$ means $i\in(0,\infty)$, 
	such that for all $i,j>0$ it holds $[V_i,V_j]\subset V_{i+j}$.
	A \emph{graded Lie group} is a simply connected Lie group $G$ whose Lie algebra is graded.
	The \emph{maximal degree} of a graded group $G$ is the maximum $i$ such that $V_i\neq\{0\}$.
\end{Def}
Graded groups are nilpotent and the exponential map $\exp:\g\to G$ is a global diffeomorphism.
We will denote by $0$ the neutral element of $G$ and identify $\g = T_0G$.
\begin{Def}[Dilations]
In a graded group for which the Lie algebra has the grading
$\g=\bigoplus_{i>0} V_i$,
	the \emph{dilations} 
	relative to the grading are the 
	group homomorphisms 
	$\delta_\lambda:G\to G$, for $\lambda\in(0,\infty)$, 
such that $(\delta_\lambda)_*(v)=\lambda^iv$ for all $v\in V_i$.	
\end{Def}
In the definition above, $\phi_*$ denotes the Lie algebra homomorphism associated to a Lie group homomorphism $\phi$, 
in particular,
$\phi \circ \exp = \exp \circ \phi_*$.
Since a graded group is simply connected,
$\delta_\lambda$ is well defined. 
Notice that, for any $\lambda,\mu>0$, $\delta_\lambda\circ\delta_\mu = \delta_{\lambda\mu}$.

\begin{Def}[Homogeneous distances]\label{def08291617}
	Let $G$ be a graded group with a dilations $\{\delta_\lambda\}_{\lambda>0}$, relative to the grading.
	We say that a distance $d$ on $G$ is \emph{homogeneous} if it is left-invariant, i.e., for every $g,x,y\in G$ we have $d(gx,gy) = d(x,y)$, and one-homogeneous with respect to the dilations, i.e., for all $\lambda>0$ and all $x,y\in G$ we have $d(\delta_\lambda x,\delta_\lambda y) = \lambda d(x,y)$.
If $d$ is one such a distance, then $(G,d)$ is called {\em homogeneous group} (with dilations relative to the grading).
\end{Def}

\begin{Rem}
A graded group admits a homogeneous distance if and only if  for $i\in(0,1)$ we have $V_i=\{0\}$, see \cite{MR1067309}. 
\end{Rem}

Given a homogeneous distance $d$, the function $p\mapsto d_0(p):=d(0,p)$ is a homogeneous norm.
Here with the term \emph{homogeneous norm} we mean a function $N:G\to[0,+\infty)$ such that for all $p,q\in G$ and all $\lambda>0$ it holds
\begin{enumerate}
\item 	$N(p)=0\ \IFF\ p=0$;
\item 	$N(pq) \le N(p)+N(q)$;
\item 	$N(p^{-1}) = N(p)$;
\item 	$N(\delta_\lambda p) = \lambda N(p)$.
\end{enumerate}
In fact, homogeneous distances are in bijection with homogeneous norms on $G$ through the formula $d(p,q) = N(p^{-1}q)$. 

Homogeneous distances induce the original topology of $G$, see \cite{LeDonne_Rigot_rmknobcp}.
Moreover, given two homogeneous distances $d_1,d_2$ on $G$, there is a constant $C>0$ such that for all $p,q\in G$
\begin{equation}\label{eq09031752}
\frac1C d_1(p,q) \le d_2(p,q) \le C d_1(p,q) .
\end{equation}
\begin{Lem}\label{lem09031753}
	Let $G$ be a graded group and $0<k_1\le k_2$ such that $V_i=\{0\}$ for all $i<k_1$ and all $i>k_2$.
	Let $d$ be a homogeneous distance and $\rho$ a left-invariant Riemannian metric on $G$.
	Then there are $C,\epsilon>0$ such that for all $p,q\in G$ with $\rho(p,q)<\epsilon$ it holds
	\begin{equation}\label{eq05311348}
	\frac1C \rho(p,q)^{\frac1{k_1}} \le d(p,q) \le C\rho(p,q)^{\frac1{k_2}} .
	\end{equation}
	In particular, the homogeneous norm $d_0$ is locally $\frac1{k_2}$-Hölder.
\end{Lem}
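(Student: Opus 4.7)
The plan is to use the left-invariance of both $d$ and $\rho$ to reduce to the case $p=0$, then rescale an arbitrary small $q$ via dilations to the compact unit sphere $\Sf=\{x:d(0,x)=1\}$. The scaling behaviour of the Riemannian norm under $\delta_\lambda$, which depends on the layers $V_i$ appearing in $\g$, will yield both bounds in one computation.

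Concretely, I would fix an inner product on $\g=\bigoplus_i V_i$ for which the layers are mutually orthogonal, denote by $|\cdot|$ the induced Euclidean norm, and let $\rho_0$ be the corresponding left-invariant Riemannian metric. Any two left-invariant Riemannian metrics are bi-Lipschitz equivalent on a compact neighborhood of $0$, so it is enough to prove the statement for $\rho=\rho_0$. A standard computation in smooth coordinates gives a neighborhood $U$ of $0$ and a constant $A>0$ with
\[
A^{-1}|\log q| \le \rho_0(0,q) \le A|\log q| \qquad\text{for every } q\in U .
\]
Moreover, since $d$ induces the topology of $G$, $\Sf$ is compact and avoids $0$, so there exist $0<a\le b$ with $a\le|\log x|\le b$ for all $x\in\Sf$.

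Now choose $\epsilon>0$ so small that $\rho_0(0,q)<\epsilon$ forces $q\in U$ and $d(0,q)\le 1$. For such $q$, set $\lambda:=d(0,q)\le 1$ and $x:=\delta_{1/\lambda}(q)\in\Sf$, and decompose $\log x=\sum_i w_i$ with $w_i\in V_i$. Then $\log q=\sum_i\lambda^i w_i$, and by the orthogonality of the layers together with $\lambda\le 1$ and $k_1\le i\le k_2$,
\[
\lambda^{2k_2}|\log x|^2 \le |\log q|^2 = \sum_i\lambda^{2i}|w_i|^2 \le \lambda^{2k_1}|\log x|^2 .
\]
Combining this with the bounds on $|\log x|$ and the equivalence between $\rho_0(0,\cdot)$ and $|\log\cdot|$ yields constants $C_1,C_2>0$ with $C_1\lambda^{k_2}\le\rho_0(0,q)\le C_2\lambda^{k_1}$. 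Solving these inequalities for $\lambda=d(0,q)$ produces \eqref{eq05311348}, and the Hölder statement for $d_0$ is just the upper bound applied with $p=0$.

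The main technical point is the orthogonal choice of inner product: once this is made, the identity $|\log(\delta_\lambda x)|^2=\sum_i\lambda^{2i}|w_i|^2$ makes the scaling transparent, and the two-sided estimate reduces to the compactness of $\Sf$ together with the standard local equivalence of a Riemannian metric and the Euclidean one in smooth coordinates.
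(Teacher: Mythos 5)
Your proof is correct, but it is organized differently from the paper's. The paper introduces the explicit quasi-distance $\eta(0,p):=\max_i(|p_i|)^{1/i}$ in exponential coordinates, invokes the fact that any continuous, left-invariant, $1$-homogeneous quasi-distance is comparable to $d$, and then sandwiches $\eta$ between $\bigl(\max_i|p_i|\bigr)^{1/k_1}$ and $\bigl(\max_i|p_i|\bigr)^{1/k_2}$ by monotonicity of $x\mapsto a^x$ for $0<a<1$, finishing with the local comparison $\max_i|p_i|\sim\rho(0,p)$. You instead dilate $q$ onto the metric unit sphere $\Sf$, use the exact scaling identity $|\log\delta_\lambda x|^2=\sum_i\lambda^{2i}|w_i|^2$ coming from your orthogonal choice of inner product, and exploit the compactness of $\Sf$ to pin $|\log x|$ between two positive constants. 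The two arguments rest on the same ingredients (homogeneity, the local equivalence of $\rho$ with the Euclidean norm in exponential coordinates, and a compactness statement), but yours makes the compactness explicit via $\Sf$ and is fully self-contained, whereas the paper's delegates it to the cited comparability of homogeneous quasi-norms and gains a reusable auxiliary object $\eta$. Two cosmetic points: the case $q=p$ (where $\lambda=0$) must be noted separately, though \eqref{eq05311348} is trivial there; and the final Hölder assertion for $d_0$ uses the upper bound for general pairs $p,q$ together with $|d_0(p)-d_0(q)|\le d(p,q)$ — this does follow from your $p=0$ case by left-invariance, but it is the triangle inequality, not merely the specialization to $p=0$, that closes the argument.
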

\begin{proof}
	We identify $G=\g$ via the exponential map. 
	So, if $p\in G$, we denote by $p_i$ the $i$-th component in the decomposition $p=\sum_ip_i$ with $p_i\in V_i$.
	Fix a norm $|\cdot|$ on $\g$.
	For any pair $(p,q)\in G\times G$ define   
	\[\eta(p,q):=\eta(0,p^{-1}q) , \text{ where }
	\eta(0,p) := \max_i (|p_i|)^{\frac1i} 
	.\]
	The function $\eta$ is a so-called quasi-distance, see \cite{LeDonne_Rigot_rmknobcp}.
	In particular, $\eta$ is continuous, left-invariant and one-homogeneous with respect to the dilations $\delta_\lambda$.
	Therefore, if $d$ is a homogeneous distance, then there is $C>0$ such that
	\[
	\frac1C \eta(p,q) \le d(p,q) \le C \eta(p,q) .
	\]
	So, we can prove \eqref{eq05311348} only for $\eta$. 
	
	Let $C,\epsilon>0$ be with $C\epsilon<1$ and such that, if $\rho(0,p)<\epsilon$, then 
	\begin{equation}\label{eq09062352}
	\frac1C \rho(0,p) \le \max_i |p_i| \le C\rho(0,p) .
	\end{equation}
	Therefore, if $\rho(p,q)<\epsilon$, then $|(p^{-1}q)_i|\le C\rho(p,q)<1$ for all $i$ and
	\begin{equation}\label{eq09062353}
	\max_i |(p^{-1}q)_i|^{\frac1{k_1}} 
	\le  \max_i (|(p^{-1}q)_i|)^{\frac1i} 
	= \eta(p,q)
	\le \max_i |(p^{-1}q)_i|^{\frac1{k_2}}  ,
	\end{equation}
	thanks to the monotonicity of the function $x\mapsto a^x$ for $0<a<1$.
	The thesis follows immediately from \eqref{eq09062352} and \eqref{eq09062353} combined.
\end{proof}

Next lemma gives a characterization of sets that are the unit ball of a homogeneous distance.
In this paper, we denote by $\Int(B)$ the interior part of a subset $B$.
The proof of the following fact is straightforward and hence omitted.
\begin{Lem}\label{lem05282259}
	Let $G$ be a graded group with dilations $\delta_\lambda$, $\lambda>0$.
	A set $B\subset G$ is the unit ball with center $0$ of a homogeneous distance on $G$ 
	if and only if $B$ is compact, $0\in\Int(B)$, $B=B^{-1}$ and
	\begin{equation}\label{eq05291157}
	 	\forall p,q\in B,\ \forall t\in[0,1]\quad\delta_t(p)\delta_{1-t}(q)\in B .
	\end{equation}
\end{Lem}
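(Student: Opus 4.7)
The plan is to prove the two implications separately.

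For the necessity (\(\Rightarrow\)), suppose \(B=\{p\in G:d(0,p)\le 1\}\) for some homogeneous distance \(d\). Compactness follows since \((G,d)\) is boundedly compact and \(d\) induces the original topology of \(G\). The inclusion \(0\in\Int(B)\) is the continuity of \(d_0\) at \(0\). The symmetry \(B=B^{-1}\) is immediate from \(d(0,p^{-1})=d(p,0)=d(0,p)\) by left-invariance. Condition \eqref{eq05291157} follows from the triangle inequality combined with left-invariance and one-homogeneity of \(d\): for \(p,q\in B\) and \(t\in[0,1]\),
\[
d(0,\delta_t(p)\delta_{1-t}(q)) \le d(0,\delta_t(p))+d(0,\delta_{1-t}(q)) = t\,d(0,p)+(1-t)\,d(0,q)\le 1.
\]

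For the sufficiency (\(\Leftarrow\)), given \(B\) satisfying the four conditions, I would introduce the Minkowski-type gauge
\[
N(p):=\inf\{\lambda>0:\delta_{1/\lambda}(p)\in B\}
\]
and set \(d(p,q):=N(p^{-1}q)\); it then suffices to show that \(N\) is a homogeneous norm whose closed unit ball is \(B\), by the bijection recalled before Lemma~\ref{lem09031753}. Finiteness of \(N(p)\) uses that \(0\in\Int(B)\) together with \(\delta_{1/\lambda}(p)\to 0\) as \(\lambda\to\infty\). Positive homogeneity \(N(\delta_\mu p)=\mu N(p)\) is a direct change of variable via \(\delta_\mu\circ\delta_{1/\lambda}=\delta_{\mu/\lambda}\). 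Symmetry \(N(p^{-1})=N(p)\) follows from \(B=B^{-1}\) combined with the fact that \(\delta_\lambda\) is a group automorphism and hence commutes with inversion. Positive definiteness \(N(p)=0\Rightarrow p=0\) uses compactness of \(B\), since otherwise \(\delta_{1/\lambda_n}(p)\) would leave every compact set as \(\lambda_n\downarrow 0\).

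The main step, and expected main obstacle, is the subadditivity \(N(pq)\le N(p)+N(q)\). Setting \(a=N(p)\), \(b=N(q)\) with both nonzero (the degenerate cases being easier), compactness of \(B\) and closedness under limits ensure that the infimums are attained, so \(p':=\delta_{1/a}(p)\in B\) and \(q':=\delta_{1/b}(q)\in B\). Choosing \(t=a/(a+b)\), condition \eqref{eq05291157} applied to \(p',q'\) gives \(\delta_t(p')\,\delta_{1-t}(q')\in B\); the identities \(\delta_t\circ\delta_{1/a}=\delta_{1/(a+b)}=\delta_{1-t}\circ\delta_{1/b}\) together with the fact that dilations are group homomorphisms yield \(\delta_t(p')\,\delta_{1-t}(q')=\delta_{1/(a+b)}(pq)\), hence \(N(pq)\le a+b\). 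Finally, to identify \(B\) with the closed unit ball of \(d\): if \(N(p)\le 1\) take \(\lambda_n\downarrow 1\) with \(\delta_{1/\lambda_n}(p)\in B\), whose limit is \(p\in B\) by closedness; conversely, for \(p\in B\) applying \eqref{eq05291157} with \(q=0\in B\) shows \(\delta_t(p)\in B\) for all \(t\in[0,1]\), whence \(N(p)\le 1\).
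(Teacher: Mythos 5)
Your proof is correct and is exactly the standard argument the authors have in mind when they omit the proof as ``straightforward'': the forward direction via the triangle inequality plus homogeneity, and the converse via the Minkowski-type gauge $N(p)=\inf\{\lambda>0:\delta_{1/\lambda}(p)\in B\}$, with subadditivity obtained from \eqref{eq05291157} at $t=a/(a+b)$ and the fact that dilations are group homomorphisms. All the auxiliary points (attainment of the infimum by compactness, the case $q=0$ giving $\delta_t(B)\subset B$, definiteness from compactness) are handled; no gaps.
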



\begin{Def}[Stratified group]
	A \emph{stratified group} is a graded group $G$ such that its Lie algebra $\g$ is generated by the layer $V_1$ of the grading of $\g$. 
\end{Def}
Notice that in a stratified group $G$ the maximal degree $s$ of the grading equals the nilpotency step of $G$ and
it holds $\g=\bigoplus_{i=1}^s V_i$ with $[V_1,V_i]=V_{i+1}$ for all $i\in\{1,\dots,s\}$, with $V_{s+1} = \{0\}$.
We also remark that all stratifications of a group $G$ are isomorphic to each other, i.e., if $\g=\bigoplus_{i=1}^{s'} W_i$ is a second stratification, then there is a Lie group automorphism $\phi:G\to G$ such that $\phi_*(W_i)=V_i$ for all $i$, see \cite{Le-Donne2015A-course-on-Car}.

In a stratified group, the map $\f:G\times V_1\to TG$, 
$
\f(g,v) := \dd L_g(v)
$, 
is a bundle morphism with $\f(g,v)\in T_gG$.
So, if $\|\cdot\|$ is any norm on $V_1$, the triple $(V_1,\|\cdot\|,\f)$ is a sub-Finsler structure on $G$.
The stratified group $G$ endowed with the corresponding sub-Finsler distance $d$ is called \emph{Carnot group}.
Such a $d$ is an example of a homogeneous distance on $G$. 

\begin{Rem}\label{rem04241215}
	As already stated, singular curves play a central role in our analysis, because they disrupt the Lipschitz regularity of the distance function.
	We remark that every Carnot group of nilpotency step $s\ge3$ has singular geodesics.
	More precisely, there is $X\in V_1$ such that the curve $t\mapsto \exp(tX)$ is a singular geodesic.
	In particular, if all non-constant length-minimizing curves are regular, then the step of the group is necessarily at most 2.
\end{Rem}


\section{Regularity of sub-Finsler distances}\label{sec08291633}

We will prove in this section that sub-Finsler distances are Lipschitz whenever all length-minimizing 
 curves are regular, see Theorem~\ref{teo04241155}.
Theorem~\ref{teo04221818} expresses this result for Carnot groups.

It is important 
to remind
what
is known
in the sub-Riemannian case.
A \emph{sub-Riemannian distance} is a sub-Finsler distance whose norm on the bundle $\E$ is induced by a scalar product.
Rifford proved in \cite{MR2307687} that,
if there are no singular length-minimizers,
for all $o\in M$, 
not only $d_o$ is locally Lipschitz, but also 
 the spheres centered at $o$ are Lipschitz hypersurfaces for almost all radii.
The key points of his proof are the tools of 
Clarke's non-smooth calculus (see \cite{MR1488695}) 
and a version of Sard's Lemma for the distance function (see \cite{MR2128549}).
An 
exhaustive exposition of this topic 
can be found in \cite{Agrachev2014Introduction-to}.

In Rifford's version of Sard's Lemma, 
one uses the fact that
the $\LL^2$~norm in
the Hilbert space $\LL^2([0,1];\E)$
is smooth away from the origin.
If $\E$ is equipped with a generic norm, instead, 
the $\LL^p$ norm on $\LL^p([0,1];\E)$ with $1\le p\le \infty$
may be non-smooth,
hence
the proof does not work in the sub-Finsler case.

The non-smoothness of the 
norm can be seen in another dissimilarity between sub-Riemannian and sub-Finsler distances.
%
Sub-Riemannian distances are proven to be locally semi-concave when there are no singular length-minimizing curves.
We remind that a function $f:\R^n\to\R$ is \emph{semi-concave} if for each $p\in \R^n$ there exists a $\Co^2$ function $g:\R^n\to\R$ such that $f\le g$ and $f(p)=g(p)$, see \cite{MR3308395}.
Semi-concavity is a stronger property than being Lipschitz.
However, semi-concavity fails to hold 
in the sub-Finsler case.
For example, the $\ell^1$-distance $d(0,(x,y)) := |x|+|y|$ on $\R^2$ is a sub-Finsler distance that is not semi-concave along the coordinate axis, although all curves are regular.


%
%

We restrict our analysis to the Lipschitz regularity of the distance function, from which we deduce 
 regularity properties of the spheres 
 by means of the homogeneity of Carnot groups.
With this aim in view,
the core of the proof of Theorem~\ref{teo04241155} 
is the bound on the point-wise Lipschitz constant 
(see \eqref{eq1013} at page \pageref{eq1013}),
which already appeared in the sub-Riemannian contest, see \cite{MR2513150}.
Our approach differs from the sub-Riemannian case for the fact that the set of 
optimal curves joining two points
on a sub-Finsler manifold may not be compact 
in the $W^{1,\infty}$ topology.
As an example, consider the set of all length-minimizers from $(0,0)$ to $(0,1)$ for the $\ell^\infty$-distance $d(0,(x,y)) := \max\{|x|,|y|\}$ on $\R^2$. 
However, we are still able to obtain a bound on the pointwise Lipschitz constant, i.e., to prove \eqref{eq1013}, by use of the weak* topology on controls.

\begin{Teo}\label{teo04241155}
	Let $(\E,\|\cdot\|,\f)$ be a sub-Finsler structure on $M$ with sub-Finsler distance $d$.
	Fix $o$ and $p$ in $M$.
	If all the length-minimizing curves from $o$ to $p$ are regular, then for every Riemannian metric $\rho$ on $M$ there are a neighborhood $U$ of $p$ and $L>0$ such that 
	\begin{equation}\label{eq09072111}
	\forall q_1,q_2\in U
	\qquad
	d_o(q_1)-d_o(q_2) \le L\rho(q_1,q_2) .
	\end{equation}
\end{Teo}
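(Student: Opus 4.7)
The plan is to argue by contradiction. If the bound failed, there would exist sequences $q_1^k, q_2^k \to p$ in $M$ with
\[
d_o(q_1^k) - d_o(q_2^k) > k \, \rho(q_1^k, q_2^k).
\]
For each $k$, pick an optimal control $w_k \in \LUI$ steering $o$ to $q_2^k$, so $\End_o(w_k) = q_2^k$ and $\|w_k\|_{\LI} = d_o(q_2^k)$. Since $d_o$ is continuous and $q_2^k\to p$, the sequence $\{\|w_k\|_{\LI}\}$ is bounded; Banach--Alaoglu then produces a subsequence (not relabeled) converging weak-$*$ in $\LUI$ to some $w_\infty$.

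The first key step is to identify $w_\infty$ as an optimal control from $o$ to $p$. Because $\f$ is a bundle morphism, it is linear on each fiber $\{q\}\times\E$, so the control system $\gamma' = \f(\gamma,u)$ is affine in the control. A classical argument then shows that on bounded $\LI$-balls the input-to-trajectory map $u \mapsto \gamma_u$ is continuous from the weak-$*$ topology of $\LUI$ to the uniform topology of $C^0([0,1];M)$: the family $\{\gamma_{w_k}\}$ is equi-Lipschitz by the uniform $\LI$-bound, hence precompact by Arzelà--Ascoli, and the linearity of $\f$ in $u$ together with its Lipschitz dependence on $\gamma$ allows one to pass to the limit in the integral equation $\gamma_{w_k}(t) = o + \int_0^t \f(\gamma_{w_k}(s), w_k(s))\dd s$ (in a chart), testing weak-$*$ convergence against the bounded and therefore $L^1$-kernel $s\mapsto\f(\gamma_\infty(s),\cdot)$. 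Hence $\End_o(w_\infty) = p$. Weak-$*$ lower semi-continuity of the $\LI$-norm gives $\|w_\infty\|_{\LI}\le d_o(p)$, while the opposite inequality is automatic, so $w_\infty$ is optimal and, by assumption, \emph{regular}; that is, $\dd \End_o(w_\infty): \LUI \to T_pM$ is surjective.

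Let $n=\dim M$ and select $v_1,\dots,v_n\in\LUI$ whose images $\dd\End_o(w_\infty) v_j$ form a basis of $T_pM$. Define
\[
F_k(s) := \End_o\Big(w_k + \sum_{j=1}^n s_j v_j\Big), \qquad s \in \R^n.
\]
Each $F_k$ is $\Co^1$; the same weak-$*$/uniform continuity argument applied to the linearized flow yields $\dd F_k(0) \to \dd F_\infty(0)$, which is invertible by construction. A uniform inverse function theorem then produces $\delta,L>0$ such that for all $k$ sufficiently large, $F_k$ restricted to $\{|s|<\delta\}$ is a $\Co^1$-diffeomorphism onto a neighborhood of $F_k(0)=q_2^k$, with $L$-Lipschitz inverse uniformly in $k$. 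Since $q_1^k\to p$, for large $k$ the point $q_1^k$ lies in this neighborhood, so $s^k := F_k^{-1}(q_1^k)$ satisfies $|s^k| \le L\rho(q_1^k,q_2^k)$, and $\tilde w_k := w_k + \sum_j s_j^k v_j$ steers $o$ to $q_1^k$ with
\[
d_o(q_1^k) \le \|\tilde w_k\|_{\LI} \le \|w_k\|_{\LI} + |s^k|\max_j\|v_j\|_{\LI} \le d_o(q_2^k) + C\rho(q_1^k,q_2^k),
\]
contradicting the assumption for $k$ large. The main obstacle I expect is the first step: the end-point map is \emph{not} weak-$*$ continuous for general nonlinear control systems, and the argument hinges crucially on the linearity of $\f$ in the control, which is built into the definition of a sub-Finsler structure via the bundle-morphism assumption. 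Once that continuity is in hand, the uniform inverse function theorem and the passage $\dd F_k(0)\to \dd F_\infty(0)$ are routine.
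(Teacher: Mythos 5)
Your proposal is correct and follows essentially the same route as the paper's: weak\(^*\)-compactness of optimal controls via Banach--Alaoglu, weak\(^*\)-continuity of the end-point map and of its differential on finite-dimensional subspaces (both resting, as you rightly stress, on the linearity of $\f$ in the control), and a quantitative inverse-function estimate --- merely repackaged as a contradiction along sequences instead of as a uniform bound over the weak\(^*\)-compact set $\scr K$ of optimal controls constructed in Lemma~\ref{lem06021600}, with your basis $v_1,\dots,v_n$ playing the role of the finite-dimensional subspace $W$ in Corollary~\ref{cor06021214}. The one point you label ``routine'' but which carries real weight is the \emph{uniform} inverse function theorem: uniformity of $\delta$ and $L$ in $k$ requires equicontinuity of $s\mapsto\dd F_k(s)$ near $0$ uniformly in $k$, which does hold because $\dd^2\End_o$ is bounded on bounded subsets of $\LUI$ for control-affine systems, but should be said explicitly; the paper instead settles for a point-dependent radius $\hat\epsilon_u$ (Proposition~\ref{prop06021609}) and recovers the local Lipschitz bound by chaining along $\rho$-geodesics in Lemma~\ref{lem05251454}.
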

The proof is presented in Section~\ref{subs07122224}.
\begin{Rem}
	Theorem~\ref{teo04241155} can be made more quantitative.
	Define
	\[
	\tau_0 := \inf\left\{\tau(\dd\End_o(u)):\End_o(u)=p\text{ and }\|u\|_{\LI} = d(o,p) \right\} ,
	\]
	where $\tau$ the minimal stretching, which we will recall in Definition~\ref{def05111117}.
	Then, for every $L>\frac{1}{\tau_0}$, there exists a neighborhood $U$ of $p$ such that \eqref{eq09072111} holds. 
	The hypothesis of regularity of all length-minimizing curves from $o$ to $p$ is equivalent to $\tau_0>0$.
\end{Rem}

In the case of Carnot groups (of step 2, see Remark~\ref{rem04241215}), we can obtain a more global result
\begin{Prop}\label{prop06031443}
	Let $(G,d)$ be a Carnot group   without non-constant singular geodesics. 
	Then for every left-invariant Riemannian metric $\rho$ and every neighborhood $U$ of $0$ the function $d_0:x\mapsto d(0,x)$ is Lipschitz on $G\setminus U$.
	Moreover, the function $d_0^2:x\mapsto d(0,x)^2$ is Lipschitz in a neighborhood of $0$.
\end{Prop}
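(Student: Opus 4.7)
The plan is to combine Theorem~\ref{teo04221818} applied at points of $\bb S_d$ with the compactness of the unit sphere and the behavior of $\rho$ under dilations. Under the hypothesis, every geodesic from $0$ to any $p \in G \setminus \{0\}$ is regular, so Theorem~\ref{teo04221818} yields, for each such $p$, a $\rho$-neighborhood on which $d_0$ is Lipschitz. Since $\bb S_d$ is $\rho$-compact (by Lemma~\ref{lem09031753}, as $d$ and $\rho$ induce the same topology), extracting a finite subcover and chaining the local estimates along short $\rho$-paths yields an open shell $W = \{a < d_0 < b\}$ with $0 < a < 1 < b$ on which $d_0$ is globally $L_0$-Lipschitz for some $L_0 > 0$.

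I would then propagate this estimate by dilations. Fixing a basis of $\g$ adapted to the grading, a direct computation gives that any left-invariant Riemannian distance $\rho$ satisfies
\begin{equation*}
C^{-1} \min(\lambda, \lambda^s)\, \rho(p, q) \le \rho(\delta_\lambda p, \delta_\lambda q) \le C \max(\lambda, \lambda^s)\, \rho(p, q) ,
\end{equation*}
where $s$ is the step of $G$ and $C$ depends only on $\rho$. Combining this with $d_0 \circ \delta_\lambda = \lambda d_0$, it follows that on each dilated shell $\delta_\lambda W = \{a\lambda < d_0 < b\lambda\}$ the function $d_0$ is Lipschitz with constant at most $C L_0 \max(1, \lambda^{1 - s})$. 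Given a neighborhood $U$ of $0$, pick $r > 0$ with $\bar B_d(0, r) \subset U$; then $G \setminus U \subset \{d_0 \ge r\}$ is covered by the shells $\{\delta_\lambda W : \lambda \ge r/b\}$, and the local Lipschitz constant of $d_0$ on this union is uniformly bounded by $L_r := C L_0 \max\bigl(1, (r/b)^{1-s}\bigr)$. A $\rho$-path-chaining argument then converts this into a global Lipschitz bound on $G \setminus U$: when $\rho(p, q)$ is small relative to $r$ the $\rho$-geodesic stays in $\{d_0 \ge r/2\}$ and one integrates the local estimate, while when $\rho(p, q)$ is larger one invokes $|d_0(p) - d_0(q)| \le d(p, q)$ together with the global comparison $d \le C'(r) \rho$, derivable from Lemma~\ref{lem09031753} and the compactness of $\bb S_d$ via homogeneity.

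For the moreover assertion, Remark~\ref{rem04241215} forces $s \le 2$ under the hypothesis; the case $s = 1$ is already Riemannian, so the interesting case is $s = 2$. The dilation analysis above shows that the local Lipschitz constant of $d_0$ at a point $p$ near $0$ is of order $d_0(p)^{-1}$, valid on a $\rho$-neighborhood of $p$ of radius comparable to $d_0(p)^2$. I would then argue by cases on the size of $\rho(p, q)$ relative to $\max(d_0(p), d_0(q))^2$: when the latter dominates one has $q$ in the local-Lipschitz domain at $p$, so, using
\begin{equation*}
\bigl| d_0^2(p) - d_0^2(q) \bigr| = \bigl( d_0(p) + d_0(q) \bigr) \cdot \bigl| d_0(p) - d_0(q) \bigr| ,
\end{equation*}
the factor $d_0(p) + d_0(q) \lesssim d_0(p)$ cancels the $d_0(p)^{-1}$ from the Lipschitz bound on $d_0$; in the complementary case one simply bounds $|d_0^2(p) - d_0^2(q)| \le \max(d_0(p), d_0(q))^2 \lesssim \rho(p, q)$. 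Both estimates yield the same uniform constant, producing Lipschitz regularity of $d_0^2$ in a neighborhood of $0$, with continuity at the origin following from $d_0(q)^2 \le C^2 \rho(0, q)$ (Lemma~\ref{lem09031753}).

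The main obstacle I anticipate is the passage from local to global Lipschitz on $G \setminus U$: a $\rho$-geodesic joining two points of $G \setminus U$ may cross into the forbidden region $U$, where the local Lipschitz constants of $d_0$ diverge. Resolving this requires either a detour along $\{d_0 = r/2\}$ (which is $\rho$-compact), or exploiting $|d_0(p) - d_0(q)| \le d(p, q)$ together with a global comparison of $d$ and $\rho$ obtained from Lemma~\ref{lem09031753} and homogeneity. The argument is standard but technical, and the resulting Lipschitz constant on $G \setminus U$ depends on $r$ (hence on $U$) through the factor $(r/b)^{1-s}$, which also explains why a neighborhood of $0$ has to be excised in the first assertion.
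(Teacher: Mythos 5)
Your proposal is correct and follows essentially the same strategy as the paper's proof: Lipschitz regularity near the unit sphere from the main theorem plus weak* compactness, propagation by dilations using the (at worst quadratic, since Remark~\ref{rem04241215} forces step $\le 2$) distortion of $\rho$ under $\delta_\lambda$, chaining along $\rho$-geodesics to globalize, and the factorization $d_0^2(p)-d_0^2(q)=(d_0(p)+d_0(q))(d_0(p)-d_0(q))$ to cancel the $d_0^{-1}$ blow-up of the local Lipschitz constant. The only organizational difference is that the paper handles the region $d_0\ge 1$ with a uniform constant via a geodesic-midpoint trick and splits $\rho$-geodesics at their crossings of the unit sphere, whereas you dilate in both directions and fall back on the quasi-isometry $d\le a\rho+b$ for separated pairs; both work.
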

\begin{proof}
	Thanks to Theorem~\ref{teo04241155}, one easily shows that there are $L>0$ and an open neighborhood $\Omega$ of the unit sphere $\{p:d(0,p)=1\}$ such that $d_0$ is $L$-Lipschitz on $\Omega$.
	
	Next, we claim that $d_0$ is locally $L$-Lipschitz on $G\setminus B_d(0,1)$.
	Indeed, let $r>0$ be such that $B_\rho(x,r)\subset\Omega$ for all $x\in S_d(0,1)$.
	If $q_1,q_2\in G\setminus B_d(0,1)$ are such that $\rho(q_1,q_2)<r$, 
	then there is $o\in G$ such that $d(0,q_1)=d(0,o)+d(o,q_1)$ and $d(o,q_1)=1$, therefore 
	\[
	d_0(q_2)-d_0(q_1)
	\le d(o,q_2) - d(o,q_1) 
	\le L \rho(o^{-1}q_2,o^{-1}q_1) 
	= L \rho(q_2,q_1) .
	\]
		
	In the second step of the proof, we prove that $d_0$ is $L$-Lipschitz on $G\setminus B_d(0,1)$.
	Let $p,q\in G\setminus B_d(0,1)$ and
	 $\gamma:[0,1]\to G$ a $\rho$-length minimizing curve from $p$ to $q$.
	If $\Im\gamma\subset G\setminus B_d(0,1)$, then there are $0=t_0\le t_1\le\dots\le  t_{k+1}=1$ such that $d_0(\gamma(t_i)) - d_0(\gamma(t_{i+1})) \le L \rho(\gamma(t_i),\gamma(t_{i+1}))$ for all $i$.
	Hence
	\begin{align*}
	d_0(p)-d_0(q)
	&= \sum_{i=0}^k d_0(\gamma(t_i)) - d_0(\gamma(t_{i+1})) \\
	&\le L \sum_{i=0}^k \rho(\gamma(t_i),\gamma(t_{i+1}))
	= L \rho(p,q) .
	\end{align*}
	If instead $\Im\gamma\cap B_d(0,1)\neq\emptyset$, then there are $0<s<t<1$ such that $d_0(\gamma(s))=d_0(\gamma(t)) = 1$ and $\gamma([0,s])\subset G\setminus B_d(0,1)$ and $\gamma([t,1])\subset G\setminus B_d(0,1)$.
	Then 
	\begin{multline*}
	d_0(p)-d_0(q) 
	= d_0(p) - d_0(\gamma(s)) 
		+ d_0(\gamma(t)) - d_0(q) \\
	\le L \left(\rho(p,\gamma(s)) 
	+ \rho(\gamma(t),q) \right)
	\le L \rho(p,q) .
	\end{multline*}

%
%
	
	Finally, let $p,q\in G\setminus B_d(0,r)$ for $0<r<1$.
	Then $\delta_{r^{-1}}p, \delta_{r^{-1}}q\in G\setminus B_d(0,1)$ and
	we have
	\begin{equation}\label{eq06041522}
	d_0(p)-d_0(q) 
	= r (d_0(\delta_{r^{-1}}p)-d_0(\delta_{r^{-1}}q) )
	\le Lr \rho(\delta_{r^{-1}}p,\delta_{r^{-1}}q)
	\le \frac{CL}{r} \rho(p,q) ,
	\end{equation}
%
	where we used in the last step the fact that there exists $C>0$ such that
	\[
	\forall p,q\in G,\ \forall r\in(0,1) \qquad\rho(\delta_{r^{-1}}p,\delta_{r^{-1}}q) \le C r^{-2} \rho(p,q).
	\]

	
	Now, we need to prove that $d_0^2$ is Lipschitz on $B_d(0,1)$.
	We first claim that $d_0^2$ is locally $4L$-Lipschitz on $B_d(0,1)\setminus\{0\}$.
	Indeed, if $p,q\in B_d(0,1)\setminus\{0\}$ are such that 
	\[
	\frac12 \le \frac{d_0(p)}{d_0(q)} \le 2 ,
	\]
	then
	\[
	0< d_0(p)+d_0(q) \le 4\min\{d_0(p),d_0(q)\} .
	\]
	Therefore, using \eqref{eq06041522},
	\begin{multline*}
	d_0(p)^2 - d_0(q)^2
	= (d_0(p)+d_0(q)) (d_0(p)-d_0(q)) \\
	\le 	(d_0(p)+d_0(q)) \frac{CL}{\min\{d_0(p),d_0(q)\} } \rho(p,q) \\
	\le (d_0(p)+d_0(q)) \frac{4CL}{d_0(p)+d_0(q)} \rho(p,q)
	= 4CL \rho(p,q).
	\end{multline*}
	Finally, using again the fact that $\rho$ is a geodesic distance, we get that $d_0^2$ is $4CL$ Lipschitz on $B_d(0,1)\setminus\{0\}$ and therefore on $B_d(0,1)$.
\end{proof}

\subsection{About the minimal stretching}
\begin{Def}[Minimal Stretching]\label{def05111117}
	Let $(X,\|\cdot\|)$ and $(Y,\|\cdot\|)$ be normed vector spaces.
	We define for a continuous linear map $L:X\to Y$ the \emph{minimal stretching}
	\[
	\str(L) := \inf\{\|y\|:y\in Y\setminus L(B_X(0,1))\}
	\]
	where $B_X(p,r)=\{q\in X: \|q-p\|<r\}$.
\end{Def}
It is easy to prove that $\tau:L(X;Y)\to[0,+\infty)$ is continuous, where $L(X;Y)$ is the space of continuous linear mappings $X\to Y$ endowed with the operator norm.

The next proposition applies this notion to smooth functions.
We omit the proof because it follows the standard arguments for the Implicit Function Theorem.
\begin{Prop}\label{prop06021609}
	Let $(X,\|\cdot\|)$ and $(Y,\|\cdot\|)$ be two Banach spaces and $F:\Omega\to Y$ a $\Co^2$ map, where $\Omega\subset X$ is open.
	Fix $\hat x\in \Omega$ and let $\tau_0:=\str(\dd F(\hat x))>0$.
	Then for every $C>1$ there is $\hat \epsilon>0$, such that for all $0<\epsilon<\hat \epsilon$ it holds
	\[
	B(F(\hat x),\epsilon) \subset F\left(B(\hat x,\frac C{\tau_0}\epsilon)\right) .
	\]
\end{Prop}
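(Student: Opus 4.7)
The approach is a Newton-type iteration. Write $L:=\dd F(\hat x)$ and $F(\hat x+h)=F(\hat x)+L(h)+R(h)$, so that $R(0)=0$ and $\dd R(0)=0$. Since $F$ is $\mathcal{C}^2$, the map $\dd R$ is continuous at $0$, so for any $\delta>0$ there is $r_0>0$ with $\|\dd R(h)\|\le\delta$ on $B_X(0,r_0)$; the Banach-space mean-value inequality then yields $\|R(h_1)-R(h_2)\|\le\delta\|h_1-h_2\|$ for $h_1,h_2\in B_X(0,r_0)$.

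The first preliminary step is to convert $\tau_0=\tau(L)>0$ into a quantitative right-inverse for $L$. By the very definition of $\tau(L)$, every $w\in Y$ with $\|w\|<\tau_0$ lies in $L(B_X(0,1))$, so by rescaling, for any $C_1>1$ and every $z\in Y$ one can choose (not continuously, not linearly) an element $s(z)\in X$ with $L(s(z))=z$ and $\|s(z)\|\le(C_1/\tau_0)\|z\|$.

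Given $C>1$, pick $C_1\in(1,C)$, set $q:=1-C_1/C\in(0,1)$ and $\delta:=q\tau_0/C_1$, let $r_0$ be the radius corresponding to this $\delta$, and take $\hat\epsilon:=r_0\tau_0/C$. For any $\epsilon<\hat\epsilon$ and any $y\in B_Y(F(\hat x),\epsilon)$, set $z:=y-F(\hat x)$ and iterate $h_0:=0$, $h_{n+1}:=h_n+s(z-L(h_n)-R(h_n))$. Setting $e_n:=z-L(h_n)-R(h_n)$, a direct algebraic check gives $e_{n+1}=R(h_n)-R(h_{n+1})$, hence $\|e_{n+1}\|\le\delta\|h_{n+1}-h_n\|\le(C_1\delta/\tau_0)\|e_n\|=q\|e_n\|$, as long as the iterates stay in $B_X(0,r_0)$. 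An inductive geometric-series estimate then gives $\|h_n\|\le(C_1/(1-q))\epsilon/\tau_0=C\epsilon/\tau_0<r_0$, so the iterates do remain in $B_X(0,r_0)$, the sequence $(h_n)$ is Cauchy, and its limit $h_\infty$ solves $F(\hat x+h_\infty)=y$ with $\|h_\infty\|\le C\epsilon/\tau_0$.

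The main subtlety is the simultaneous control of two quantities: the error $e_n$ must contract geometrically, and the displacements $h_n$ must remain in the ball $B_X(0,r_0)$ where the Lipschitz bound on $R$ holds. The calibration $C_1/(1-q)=C$ is tuned precisely so that both requirements fit into the single radius $C\epsilon/\tau_0$. A minor but conceptually necessary point is the existence of the pointwise selection $s$: although $L$ may admit neither a continuous nor a linear right-inverse, the infimum definition of $\tau_0$ provides exactly the quantitative bound that the iteration requires.
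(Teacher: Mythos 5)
Your proof is correct, and it is precisely the standard Lyusternik--Graves-type iteration that the paper alludes to when it omits the proof as ``following the standard arguments for the Implicit Function Theorem'': the hypothesis $\str(\dd F(\hat x))>0$ is converted into a quantitative (pointwise, non-linear) right inverse of the differential, and a Newton iteration with the calibration $C_1/(1-q)=C$ keeps the iterates in the ball where the remainder is $\delta$-Lipschitz while forcing geometric contraction of the error. Your remark that the selection $s$ need be neither linear nor continuous is exactly the right observation, since in a general Banach space $\dd F(\hat x)$ may have no bounded linear right inverse.
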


\subsection{The End-point map is weakly* continuous}

As before, let $(\E,\f,\|\cdot\|)$ be a sub-Finsler structure on a manifold $M$.
We want to prove the following proposition.
\begin{Prop}\label{prop05111227}
	Fix $o\in M$ and let $o_k\in M$ be a sequence converging to $o$.
	Let $u_k\in\LUI$ be a sequence of controls weakly* converging to $u\in\LUI$.	
	Let $\gamma_k$ (resp.~$\gamma$) be the curve with control $u_k$ (resp.~$u$) and $\gamma_k(0)=o_k$ (resp.~$\gamma(0)=o$).
	Then $\gamma_k$ uniformly converge to $\gamma$.
\end{Prop}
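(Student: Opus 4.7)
The plan is the classical ODE-continuity argument adapted to weak$^*$ convergence of controls: extract a uniform subsequential limit $\tilde\gamma$ of $\{\gamma_k\}$ via Arzelà--Ascoli, identify $\tilde\gamma=\gamma$ by uniqueness of the Cauchy problem, and conclude by a standard subsequence argument that the whole sequence converges.

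First I would set up uniform bounds and work locally. By Banach--Steinhaus, weak$^*$ convergence in $\LI$ forces $C:=\sup_k\|u_k\|_{\LI}<\infty$, and the standing assumption in Remark~\ref{rem06011524} guarantees each $\gamma_k$ is defined on $[0,1]$. A standard ODE continuous-dependence argument on short subintervals (iteratively: pick a compact Riemannian tube $K$ of radius $r$ around $\gamma([0,1])$, note that $\|\f(p,v)\|\le M_K$ on $K\times B_\E(0,C)$, so a curve entering $K$ cannot exit before a time $\asymp r/(M_K C)$; in this time one obtains uniform convergence, then iterate finitely many times) ensures that for $k$ large the curves $\gamma_k$ remain in a common compact set $K$ which I may cover by a single coordinate chart. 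The bundle-morphism hypothesis then writes $\f(p,v)=A(p)v$ with $A(p)\in L(\E,\R^n)$ smooth in $p$, so in coordinates
\[
\gamma_k(t) = o_k + \int_0^t A(\gamma_k(s))\, u_k(s)\,\dd s ,
\]
and the $\gamma_k$ are uniformly $\|A\|_{\LI(K)}C$-Lipschitz. Arzelà--Ascoli then yields a subsequence (not relabelled) with $\gamma_k\to\tilde\gamma$ uniformly and $\tilde\gamma(0)=o$.

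The decisive step is passing to the limit in the integrated ODE. I would split
\[
\int_0^t A(\gamma_k(s))\, u_k(s)\,\dd s
= \int_0^t [A(\gamma_k(s))-A(\tilde\gamma(s))]\, u_k(s)\,\dd s
+ \int_0^t A(\tilde\gamma(s))\, u_k(s)\,\dd s .
\]
The first integral vanishes in the limit because $A$ is continuous, $\gamma_k\to\tilde\gamma$ uniformly, and the $\LI$-norms of the $u_k$ are uniformly bounded. The second integral is the pairing of the fixed $\LU$-function $s\mapsto \chi_{[0,t]}(s)\,A(\tilde\gamma(s))$ with the weakly$^*$ convergent sequence $u_k\wsto u$, and hence converges to $\int_0^t A(\tilde\gamma(s))\, u(s)\,\dd s$. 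Thus $\tilde\gamma$ satisfies the Cauchy problem with control $u$ and initial point $o$, so $\tilde\gamma=\gamma$ by uniqueness of solutions of ODEs; the subsequence principle then upgrades this to convergence of the full sequence.

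The hard part is precisely the weak$^*$ limit passage in the second summand above, which rests decisively on the \emph{linearity} of $\f$ in the fiber variable (built into the bundle-morphism definition): without fiberwise linearity, weak$^*$ convergence of the $u_k$ would not propagate through the nonlinear dependence of $\f$ on $v$, and one would only get Young-measure-type convergence. The ancillary technical point is the a priori confinement of the $\gamma_k$ to a common compact set on all of $[0,1]$; this is routine but deserves the iterative Gronwall-style argument sketched above, rather than a direct bound on $|\gamma_k-\gamma|$ (which fails in general under mere weak$^*$ convergence of the controls).
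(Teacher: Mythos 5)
Your proposal is correct and follows essentially the same route as the paper: a Banach--Steinhaus bound on $\|u_k\|_{\LI}$, confinement to a compact set, Arzel\`a--Ascoli precompactness of $\{\gamma_k\}$, and the key observation that weak* convergence of the controls can only be exploited against a \emph{frozen} continuous integrand built from the limit curve (the paper's auxiliary curves $\eta_k$ play exactly the role of your second integral). The only cosmetic differences are that the paper uses a Whitney embedding rather than a single chart to linearize, and identifies the subsequential limit by iterating a Gronwall-type integral inequality rather than by citing uniqueness for the Cauchy problem --- which is the same argument written out.
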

In particular, it follows that the End-point map $\End_o:\LUI\to M$ is weakly* continuous.
\begin{proof}
	Since the sequence $u_k$ is bounded in $\LUI$ by the Banach–Steinhaus Theorem and the sequence $o_k$ is bounded in $(M,d)$, then there is a compact set $K\subset M$ such that $\gamma_k\subset K$ for all $k$.
	Let $R>0$  be such that $\|u_k\|_{\LI}\le R$ for all $k\in\N$.
	
	Thanks to the Whitney Embedding Theorem, we can assume that $M$ is a submanifold of $\R^N$ for some $N\in\N$.
	Fix a basis $e_1,\dots,e_r$ of $\E$ and define the vector fields $X_i:M\to\R^N$ as
	\[
	X_i(p) := \f(p,e_i) .
	\]
	Since they are smooth, they are $L$-Lipschitz on $K$ for some $L>0$.
	We extend the vector fields $X_i:M\to\R^N$ to smooth functions $X_i:\R^N\to\R^N$.
	
	Define $\eta_k:[0,1]\to \R^N$ as 
	\[
	\eta_k(t) := o_k + \int_0^t u_k^i(s) X_i(\gamma(s))\dd s .
	\]
	Since $t\mapsto  X_i(\gamma(t))\in\R^N$ are continuous, then $u_k^i X_i(\gamma)\wsto u^i  X_i(\gamma)$, for all $i\in\{1,\dots,r\}$.
	In particular, $\eta_k(t)\to \gamma(t)$ for each $t\in[0,1]$.
	Moreover, since the $\eta_k$'s have uniformly bounded derivative, they are a pre-compact family of curves with respect to the topology of uniform convergence. 
	This fact and the pointwise convergence imply that $\eta_k\to \gamma$ uniformly on $[0,1]$.
	
	Set $\epsilon_k:=\sup_{t\in[0,1]}|\eta_k(t)-\gamma(t)| + 2|o_k-o|$, so that $\epsilon_k\to 0$, where $|\cdot|$ is the usual norm in $\R^N$.
	
	By Ascoli-Arzelà Theorem, the family of curves $\{\gamma_k\}_k$ is also precompact with respect to the uniform convergence.
	Hence, if we prove that the only accumulation curve of $\{\gamma_k\}_k$ is $\gamma$, then we obtain that $\gamma_k$ uniformly converges to $\gamma$.
	So, we can assume $\gamma_k\to\xi$ uniformly for some $\xi:[0,1]\to M$.
	Then we have (sums on $i$ are hidden)
	\begin{align*}
	 	|\gamma_k(t)-\gamma(t)| 
		&\le |o_k-o| + \left| \int_0^t u_k^i(s) X_i(\gamma_k(s)) - u^i(s)  X_i(\gamma(s)) \dd s \right| \hfill  \\
		&\le |o_k-o| + \int_0^t |u_k^i(s) X_i(\gamma_k(s)) - u_k^i(s) X_i(\gamma(s))| \dd s +\hfill \\
		&\qquad\qquad	 + \left| \int_0^t u_k^i(s) X_i(\gamma(s)) - u^i(s)  X_i(\gamma(s))  \dd s \right|  \\
		&\le 2|o_k-o| + rRL \int_0^t|\gamma_k(s)-\gamma(s)| \dd s + |\eta_k(t)-\gamma(t)| \\ 
		&\le rRL \int_0^t|\gamma_k(s)-\gamma(s)| \dd s + \epsilon_k .
	\end{align*}
	Passing to the limit $k\to\infty$, we get for all $t\in[0,1]$
	\begin{equation}\label{eq1250}
	|\xi(t)-\gamma(t)| \le rRL \int_0^t |\xi(s)-\gamma(s)| \dd s .
	\end{equation}
	Starting with the fact that $\|\xi-\gamma\|_{\LI}\le C$ for some $C>0$ and iterating the previous inequality, we claim that
	\[
	|\xi(t)-\gamma(t)| \le C \frac{(rRLt)^j}{j!} 
	\qquad\forall j\in\N,\ \forall t\in[0,1] .
	\]
	Indeed, by induction, from \eqref{eq1250} we get
	\[
	 	|\xi(t)-\gamma(t)|
		\le rRL \int_0^t C \frac{(rRL)^j}{j!} t^j \dd s 
		= C \frac{(rRL)^{j+1}}{j!} \frac{t^{j+1}}{j+1} .
	\]
	Finally, since $\lim_{j\to\infty}\frac{(rRLt)^j}{j!} = 0$, 
	we have $|\xi(t)-\gamma(t)| = 0$ for all $t$.
\end{proof}

\subsection{The differential of the End-point map is an End-point map}
The End-point map behaves like the exponential map: its differential is again an End-point map.
In order to make this statement precise, we consider the case $M=\R^n$.
Notice that we don't need any bracket-generating condition.
In Corollary~\ref{cor06021214} we will use the results on $\R^n$ to prove a statement for all manifolds.

Let $\f:\R^n\times\E\to \R^n$ be a smooth map.
Given a basis $e_1,\dots,e_r$ of $\E$, we define the vector fields $X_i:\R^n\to\R^n$ as
\[
X_i(p) := \f(p,e_i) .
\]
The differential of the End-point map with basis point $0$ is the map 
\[
\begin{array}{rccc}
 	\dd\End_0:&\LUI\times\LI([0,1];\E) &\to &\R^n  \\
	&(u,v) &\mapsto &\dd\End_0(u)[v] .
\end{array}
\]
Define $Y_i,Z_i:\R^n\times\R^n\to\R^n\times\R^n$ for $i=1,\dots,r$ as
\[
\begin{cases}
 	Y_i(p,q) := \left( X_i(p),\dd X_i(p)[q] \right) \\
	Z_i(p,q) := \left( 0,X_i(p) \right)
\end{cases}
\]
where $\dd X_i(p):\R^n\to\R^n$ is the differential of $X_i$ at $p$.
These vector fields induce a new End-point map
\[
\End_{00}:\LI([0,1];\E\times\E)\to\R^n\times\R^n 
\]
with starting point $(0,0)\in \R^n\times\R^n$.

\begin{Prop}\label{prop06021149}
 	For all $u,v\in\LI([0,1];\E)$ it holds
	\[
	\left(\End_0(u) , \dd\End_0(u)[v] \right) = \End_{00}(u,v) .
	\]
\end{Prop}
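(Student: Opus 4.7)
The plan is to unfold the definition of $\End_{00}$ and recognize its second coordinate as the solution of the classical variation equation associated with $\End_0$. Let $t\mapsto (P(t),Q(t))\in \R^n\times\R^n$ be the integral curve of $(u,v)\in\LI([0,1];\E\times\E)$ starting at $(0,0)$ for the vector fields $\{Y_i,Z_i\}$. Unpacking the definitions of $Y_i$ and $Z_i$, the system decouples into
\begin{align*}
P'(t) &= \textstyle\sum_i u^i(t)\, X_i(P(t)), & P(0) &= 0 , \\
Q'(t) &= \textstyle\sum_i u^i(t)\, \dd X_i(P(t))[Q(t)] + \sum_i v^i(t)\, X_i(P(t)), & Q(0) &= 0 .
\end{align*}
The first line is exactly the equation defining $\gamma_u$, so $P = \gamma_u$ and in particular $P(1) = \End_0(u)$.

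For the second coordinate, I would consider the one-parameter family of controls $u+\epsilon v\in\LUI$, $\epsilon\in\R$, and their integral curves $\gamma_\epsilon := \gamma_{u+\epsilon v}$ starting at $0$, so that $\End_0(u+\epsilon v) = \gamma_\epsilon(1)$. By the $\Co^1$ dependence of solutions of ODEs on parameters (the same result from \cite{MR3308395} invoked right after the definition of $\End_o$), the map $(\epsilon,t)\mapsto \gamma_\epsilon(t)$ is of class $\Co^1$, and the curve $R(t) := \partial_\epsilon \gamma_\epsilon(t)\big|_{\epsilon=0}$ satisfies $R(0)=0$ together with the linear ODE obtained by differentiating
$\gamma_\epsilon'(t) = \sum_i (u^i(t)+\epsilon v^i(t))\, X_i(\gamma_\epsilon(t))$
in $\epsilon$ at $0$, namely
\[
R'(t) = \textstyle\sum_i u^i(t)\, \dd X_i(\gamma_u(t))[R(t)] + \sum_i v^i(t)\, X_i(\gamma_u(t)).
\]
Since $P = \gamma_u$, this is exactly the Cauchy problem satisfied by $Q$, and uniqueness of solutions to linear time-dependent systems gives $Q\equiv R$. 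Consequently $\dd\End_0(u)[v] = R(1) = Q(1)$, and combined with $P(1) = \End_0(u)$ this yields the identity.

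The step that deserves the most care is the smooth dependence of $\gamma_\epsilon$ on the parameter $\epsilon$ in $\LI$-topology; however, since the perturbation $u+\epsilon v$ is affine in $\epsilon$ with $v\in\LUI$, this is a standard ODE fact and is essentially the same statement as the $\Co^1$ regularity of $\End_o$ already invoked in the preliminaries. Everything else reduces to an elementary ODE manipulation together with the uniqueness theorem for linear Cauchy problems, so I do not anticipate any real obstacle beyond book-keeping.
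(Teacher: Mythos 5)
Your argument is correct and is essentially the proof the paper has in mind: the paper omits it, citing the explicit representation of $\dd\End_0(u)[v]$ as the time-$1$ solution of the variational (linearized) Cauchy problem along $\gamma_u$, and your decoupling of the $\{Y_i,Z_i\}$ system plus the parameter-differentiation derivation of that same linear equation, followed by uniqueness, is exactly how that representation is obtained. The only point you rightly flag --- $\Co^1$ dependence on $\epsilon$ for the affine family $u+\epsilon v$ with merely measurable controls --- is covered by the same Carath\'eodory-type regularity result the paper already invokes for the $\Co^1$ smoothness of $\End_o$.
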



The proof is immediate, and hence omitted, once one has an explicit representation of the differential $\dd\End_0(u)[v]$, see \cite{Mon02A-tour-of-subriemannian-geometries-their-geodesics-and-applications}.
This result, together with Proposition~\ref{prop05111227}, gives us the weakly* continuity of the differential of the End-point map.
The next corollary is an application.
\begin{Cor}\label{cor06021214}
	Let $(\E,\f,\|\cdot\|)$ be a sub-Finsler structure on a manifold $M$ and $o\in M$.
	Let $\rho$ be a Riemannian metric on $M$.
	Then the map $\LUI\to[0,+\infty)$, $u\mapsto \tau(\dd\End_o(u))$ is weakly* lower semi-continuous, where $\tau$ is the minimal stretching computed with respect to the norm given by $\rho$.
\end{Cor}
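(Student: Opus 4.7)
The plan is to dualize. Since the target of $\dd\End_o(u)$ is the finite-dimensional space $T_{\End_o(u)}M$, standard polar duality gives, for every continuous linear map $L:\LUI\to\R^n$ and every norm on the target,
\[
\tau(L)=\min_{\|y^*\|_*=1}\|L^*y^*\|_{(\LUI)^*}.
\]
The minimum is attained because the dual unit sphere is a compact subset of the finite-dimensional space $(\R^n)^*$. Reading the definition of $\tau$ as the largest $r\ge0$ with $B(0,r)\subset L(B(0,1))$ and applying polarity to the symmetric convex set $L(B(0,1))$ yields this formula.

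Next I would establish the following ``strong-operator'' type of weak$^*$ continuity: if $u_k\wsto u$ in $\LUI$, then for every fixed $v\in\LUI$,
\[
\dd\End_o(u_k)[v]\;\longrightarrow\;\dd\End_o(u)[v].
\]
After embedding $M$ in some $\R^N$ by Whitney's theorem and extending $\f$ smoothly, the analogue of Proposition~\ref{prop06021149} with base point $(o,0)\in\R^{2N}$ identifies $(\End_o(u),\dd\End_o(u)[v])$ with the end-point map of the doubled sub-Finsler structure on $\R^{2N}$ evaluated at the control $(u,v)\in\LI([0,1];\E\times\E)$. With $v$ held constant one has $(u_k,v)\wsto(u,v)$, and Proposition~\ref{prop05111227} applied to the doubled system yields the claimed convergence together with $\End_o(u_k)\to\End_o(u)$.

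Set $L_k:=\dd\End_o(u_k)$ and $L:=\dd\End_o(u)$. A smooth local trivialization of $TM$ around $\End_o(u)$ lets me view all $L_k$, for $k$ large, as maps into a single copy of $\R^n$; since $\End_o(u_k)\to\End_o(u)$ and $\rho$ is continuous, the fibre-norms differ from the limiting norm by a factor $1+o(1)$ which does not affect the $\liminf$ of $\tau(L_k)$. Let $y_k^*$ realize $\tau(L_k)$ with $\|y_k^*\|_*=1$; passing to a subsequence along which $\tau(L_k)\to\liminf_k\tau(L_k)$, extract a further subsequence with $y_k^*\to y^*$, $\|y^*\|_*=1$, by compactness in finite dimensions. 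For any fixed $v\in\LUI$ with $\|v\|_\infty\le1$ the previous step gives $\langle y_k^*,L_kv\rangle\to\langle y^*,Lv\rangle$, hence
\[
\liminf_k\tau(L_k)=\liminf_k\sup_{\|w\|_\infty\le1}\langle y_k^*,L_kw\rangle\ge\langle y^*,Lv\rangle.
\]
Taking the supremum over $v$ produces $\liminf_k\tau(L_k)\ge\|L^*y^*\|_{(\LUI)^*}\ge\tau(L)$, which is the desired lower semi-continuity.

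The main technical obstacle I anticipate is the bookkeeping for the moving target spaces $T_{\End_o(u_k)}M$ and their differing Riemannian norms; a smooth local trivialization of $TM$ combined with continuity of $\rho$ reduces this to a $1+o(1)$ perturbation that does not interfere with the $\liminf$. Once that is in place, the duality formula plus the weak$^*$-pointwise convergence of $\dd\End_o(u_k)$ does all the work.
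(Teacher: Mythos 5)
Your proposal is correct. It rests on exactly the same two ingredients as the paper's proof --- the identification of $(\End_o(u),\dd\End_o(u)[v])$ with the end-point map of the doubled system (Proposition~\ref{prop06021149}) and the weak* continuity of end-point maps (Proposition~\ref{prop05111227}), which together give $\dd\End_o(u_k)[v]\to\dd\End_o(u)[v]$ for each fixed $v$ --- but the way you convert this ``strong-operator'' convergence into lower semi-continuity of $\tau$ is genuinely different. The paper works on the domain side: it picks a finite-dimensional subspace $W\subset\LI([0,1];\E)$ of controls with $B(0,\hat\tau-\tfrac\epsilon2)\ssubset\dd\End_o(u)[B_{\LI}(0,1)\cap W]$, upgrades pointwise convergence to norm convergence of $\dd\End_o(u_k)|_W$, and concludes that the perturbed images still cover a slightly smaller ball. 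You instead dualize onto the finite-dimensional target, writing $\tau(L)=\min_{\|y^*\|_*=1}\|L^*y^*\|$ and extracting a convergent subsequence of minimizing covectors $y_k^*$; the pairing $\langle y_k^*,L_kv\rangle\to\langle y^*,Lv\rangle$ then does the work. Your route buys a cleaner compactness argument (the dual unit sphere, rather than a carefully chosen $W$ with a compactly contained inclusion) at the cost of justifying the polar-duality formula for the minimal stretching --- which does hold for the symmetric convex set $L(B(0,1))$, including the degenerate non-surjective case where both sides vanish. Both proofs handle the moving tangent spaces the same way, via continuity of the $\rho$-norms after a local identification with $\R^n$. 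No gaps.
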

\begin{proof}
%
	Let $\{u_k\}_k\subset\LUI$ be a sequence such that $u_k\wsto u\in\LUI$.
	Let $\gamma_u$ be the curve with control $u$ and starting point~$o$.
	We can pull back the sub-Finsler structure from a neighborhood of $\gamma_u$ to an open subset of $\R^n$ via a covering map,
	so that we reduce the statement to the case $M=\R^n$.
%
%

	We need to prove 
	\begin{equation}\label{eq06021142}
		\liminf_{k\to\infty} \tau(\dd\End_o(u_k)) \ge \tau(\dd\End_o(u)) .
	\end{equation}
	Set $\hat\tau:=\tau(\dd\End_0(u))$.
	If $\hat\tau=0$, then \eqref{eq06021142} is fulfilled, so we assume $\hat\tau>0$.
	Let $\hat\tau> \epsilon>0$.
	Then there exists a finite-dimensional vector space $W\subset\LI([0,1];\E)$ such that 
	\[
	B_{\End_0(u)}(0,\hat\tau-\frac\epsilon2) \ssubset \dd\End_0(u)[B_{\LI}(0,1)\cap W] ,
	\]
	where, for $p\in\R^n$, $B_{p}$ denotes a ball in $\R^n=T_{p}\R^n$ with respect to the norm given by $\rho$ at $p$, and $B_{\LI}$ denotes a ball in $\LI([0,1];\E)$ with respect to the $\LI$-norm induced by $\|\cdot\|$.
	Since $\dim W<\infty$ and by Propositions~\ref{prop05111227} and~\ref{prop06021149}, the maps $\dd\End_0(u_k)|_W$ strongly converge to $\dd\End_0(u)|_W$.
	Moreover, the norm on $\R^n=T_{p}\R^n$ given by $\rho$ continuously depends on $p\in\R^n$.
	Therefore, for $k$ large enough we have
	\[
	B_{\End_0(u_k)}(0,\hat\tau-\epsilon) \subset \dd\End_0(u_k)[B_{\LI}(0,1)\cap W] .
	\]
	Hence
	\[
	\liminf_{k\to\infty}\tau(\dd\End_0(u_k)) \ge \hat\tau-\epsilon .
	\]
	Since $\epsilon$ is arbitrary, \eqref{eq06021142} follows.
\end{proof}

\subsection{The sub-Finsler distance is Lipschitz in absence of singular geodesics}\label{subs07122224}
The proof of Theorem~\ref{teo04241155} is divided into the next two lemmas from which Theorem~\ref{teo04241155} follows.

\begin{Lem}\label{lem06021600}
	Let $o,p\in M$ such that all $d$-minimizing curves from $o$ to $p$ are regular.
	Then there exist a compact neighborhood $K\subset M$ of $p$ and a weakly* compact set $\scr K\subset\LUI$
	such that:
	\begin{enumerate}
	\item 	$\End_o:\scr K\to K$ is onto;
	\item 	$\dcc(o,\End_o(u))=\|u\|_{\LI}$ for all $u\in\scr K$;
	\item 	every $u\in \scr K$ is a regular point for $\End_o$.
	\end{enumerate}
\end{Lem}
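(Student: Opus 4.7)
My plan is to take $\scr K$ to be the set of length-minimizing controls from $o$ to points of a suitably small compact neighborhood $K$ of $p$. The ambient framework is the closed ball $B_R := \{u \in \LUI : \|u\|_{\LI} \le R\}$ of radius $R := d(o,p)+1$, which by Banach-Alaoglu is weak* compact, and moreover weak* metrizable since $\LUE$ is separable. The three tools from the preceding subsections are Proposition~\ref{prop05111227} (weak* continuity of $\End_o$), Corollary~\ref{cor06021214} (weak* lower semi-continuity of $u \mapsto \tau(\dd\End_o(u))$), and Proposition~\ref{prop06021609} (local openness of $\End_o$ at regular points, measured via the minimal stretching).

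First, let $\scr K_p \subset B_R$ denote the set of length-minimizers from $o$ to $p$. A routine weak* extraction argument shows $\scr K_p$ is weak* closed in $B_R$, hence weak* compact; by the regularity hypothesis the lower semi-continuous function $\tau \circ \dd\End_o$ is strictly positive there and attains a minimum $\tau_0 > 0$. Applying Proposition~\ref{prop06021609} to any element of $\scr K_p$ also shows that $d(o,\cdot)$ is upper semi-continuous at $p$.

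The key step is to establish the existence of a compact neighborhood $K$ of $p$ such that every length-minimizing control from $o$ to any $q \in K$ lies in $B_R$ and satisfies $\tau(\dd\End_o(\cdot)) \ge \tau_0/2$. I would prove this by contradiction: a sequence $q_n \to p$ with length-minimizing controls $u_n$ violating the $\tau$-bound would, using the upper semi-continuity of $d(o,\cdot)$ at $p$ to trap the $u_n$ in $B_R$, produce a weak* cluster point $u$. Weak* continuity of $\End_o$ combined with weak* lower semi-continuity of $\|\cdot\|_{\LI}$ and upper semi-continuity of $d(o,\cdot)$ at $p$ force $u \in \scr K_p$, while weak* lower semi-continuity of $\tau \circ \dd\End_o$ forces $\tau(\dd\End_o(u)) \le \tau_0/2 < \tau_0$, contradicting $u \in \scr K_p$.

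Given such a $K$, I set $\scr K := \{u \in B_R : \End_o(u) \in K,\ \|u\|_{\LI} = d(o,\End_o(u))\}$. Surjectivity of $\End_o|_{\scr K}$ onto $K$ follows from the standard weak* extraction argument applied to an arbitrary $q \in K$; property (2) holds by definition; property (3) is immediate from the key step. Weak* compactness of $\scr K$ is the final point and is where the main obstacle lies: given $u_n \in \scr K$ with $u_n \wsto u$, Proposition~\ref{prop05111227} gives $q := \End_o(u) \in K$, but to conclude $\|u\|_{\LI} = d(o,q)$ I need upper semi-continuity of $d(o,\cdot)$ at $q$ rather than just at $p$. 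This is precisely where the key step pays off: since all length-minimizers from $o$ to $q$ are regular (with $\tau \ge \tau_0/2$), Proposition~\ref{prop06021609} applies at such a minimizer and yields upper semi-continuity at $q$, giving $\|u\|_{\LI} \le \liminf \|u_n\|_{\LI} = \liminf d(o, \End_o(u_n)) \le d(o,q) \le \|u\|_{\LI}$ and hence $u \in \scr K$.
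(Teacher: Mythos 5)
Your proposal is correct and follows essentially the same route as the paper: the same set $\scr K$ of length-minimizing controls landing in a small compact neighborhood $K$, and the same contradiction argument combining Banach--Alaoglu, the weak* continuity of $\End_o$, the weak* lower semicontinuity of the $\LI$-norm, and Corollary~\ref{cor06021214} to manufacture a forbidden minimizer to $p$ itself. The only notable difference is that you verify the weak* compactness of $\scr K$ in detail, correctly identifying that this needs upper semicontinuity of $d_o$ near $p$ and deriving it from Proposition~\ref{prop06021609} at regular minimizers, whereas the paper simply asserts that $\scr K(K)$ is compact.
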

\begin{proof}
	For any compact neighborhood $K$ of $p$, define the compact set
	\[
	\scr K(K) := \left\{u\in\LUI:\End_o(u)\in K\text{ and }d(o,\End_o(u)) = \|u\|_{\LI} \right\} .
	\]
	Since the metric $d$ is geodesic, the End-point map $\End_o:\scr K(K)\to K$ is surjective, for all $K$.
	Moreover, the second requirement holds by definition.
	Finally, suppose that there exist a sequence $p_k\to p$ and a sequence $u_k\in\LUI$ such that $\End_o(u_k)=p_k$, $d(0,p_k)=\|u_k\|_{\LI}$ and $u_k$ is a singular point for $\End_o$, for all $k$.
	Since the sequence $u_k$ is bounded, thanks to the Banach–Alaoglu Theorem there is $u\in\LUI$ such that, up to a subsequence, $u_k\wsto u$.
	By the continuity of $\End_o$, we have $\End_o(u)=p$.
	By Corollary~\ref{cor06021214}, we have $\tau(\dd\End_o(u)) \le \liminf_k\tau(\dd\End_o(u_k)) = 0$.
	Finally, by the lower-semicontinuity of the norm with respect to the weak* topology, we have
	\[
	\|u\|_{\LI} 
	\le \liminf_{k\to\infty}\|u_k\|_{\LI}
	= \liminf_{k\to\infty} d(o,p_k)
	= d(o,p)
	\le \|u\|_{\LI} .
	\]
	So, $u$ is the control of a singular length-minimizing curve from $o$ to $p$, against the assumption.
	Therefore, there exists a neighborhood $K$ of $p$ such that $\scr K(K)$ contains only regular points for $\End_o$.
\end{proof}

\begin{Lem}\label{lem05251454}
	Let $o\in M$, and $K\subset M$ compact.
	Suppose there is a weakly* compact set $\scr K\subset\LUI$
	that satisfies all three properties listed in Lemma~\ref{lem06021600}.
	Then for every Riemannian metric $\rho$ on $M$ there exists $L>0$ such that the function $d_o:p\mapsto d(o,p)$ is locally $L$-Lipschitz on the interior of $K$.
\end{Lem}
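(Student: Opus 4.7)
The plan is to combine a uniform lower bound on the minimal stretching over $\scr K$ with a uniform version of the quantitative open-mapping Proposition~\ref{prop06021609}. First, I would set
\[
\tau_0 := \inf_{u \in \scr K} \tau(\dd\End_o(u)) .
\]
By Corollary~\ref{cor06021214}, the map $u \mapsto \tau(\dd\End_o(u))$ is weakly* lower-semi-continuous, so on the weakly*-compact set $\scr K$ the infimum is attained; combining with hypothesis~(3), which forces $\tau(\dd\End_o(u)) > 0$ for every $u \in \scr K$, this yields $\tau_0 > 0$.

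The heart of the proof is then a uniform upgrade of Proposition~\ref{prop06021609}: fixing any $C > 1$, I claim there exists $\hat\epsilon > 0$, independent of $u$, such that for every $u \in \scr K$ and every $\eta \in (0, \hat\epsilon)$,
\[
B_\rho(\End_o(u), \eta) \subset \End_o\bigl(B_{\LUI}(u, (C/\tau_0)\eta)\bigr) ,
\]
where $B_\rho$ denotes a ball in $M$ with respect to the Riemannian metric (read in local coordinates around $\End_o(u)$) and $B_{\LUI}$ a ball in the $\LL^\infty$-norm. This is the main obstacle. Pointwise, Proposition~\ref{prop06021609} produces a constant $\hat\epsilon_u$ that depends on $\tau(\dd\End_o(u))$ and on the $C^2$-behavior of $\End_o$ near $u$. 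Uniformity comes from two ingredients: on one hand $\tau(\dd\End_o(u)) \ge \tau_0$ for every $u \in \scr K$; on the other hand, $\scr K$ is norm-bounded in $\LUI$ by Banach--Steinhaus, since it is weakly* compact, and standard smooth dependence of flows on data gives a bound on the $C^2$-norm of $\End_o$ uniform over norm-bounded subsets of $\LUI$. Together these render the constants in Proposition~\ref{prop06021609} uniform over $u \in \scr K$.

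With this in hand, I would set $L := C/\tau_0$ and verify the local Lipschitz estimate directly. Fix $q \in \Int K$ and let $V_q := B_\rho(q, \hat\epsilon/2) \cap \Int K$, a neighborhood of $q$. For any $q_1, q_2 \in V_q$ one has $\rho(q_1, q_2) < \hat\epsilon$. By hypothesis~(1) choose $u_1 \in \scr K$ with $\End_o(u_1) = q_1$, noting that by hypothesis~(2) we have $\|u_1\|_{\LI} = d(o, q_1)$. The uniform open-mapping step applied at $u_1$ yields $v \in \LUI$ with $\End_o(v) = q_2$ and $\|v - u_1\|_{\LI} \le L \rho(q_1, q_2)$, whence
\[
d(o, q_2) \le \|v\|_{\LI} \le \|u_1\|_{\LI} + L\rho(q_1, q_2) = d(o, q_1) + L\rho(q_1, q_2) .
\]
Interchanging the roles of $q_1$ and $q_2$ gives the reverse inequality, so $|d_o(q_1) - d_o(q_2)| \le L\rho(q_1, q_2)$ on $V_q$. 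Since $q \in \Int K$ was arbitrary and $L$ depends only on $\tau_0$ and $C$, the function $d_o$ is locally $L$-Lipschitz on $\Int K$.
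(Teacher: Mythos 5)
Your proposal is correct in substance but takes a genuinely different route in its second half. The first half coincides with the paper's: both use Corollary~\ref{cor06021214} together with the weak* compactness of $\scr K$ to extract a strictly positive lower bound $\tau_0$ on the minimal stretching, and both feed this into the quantitative open-mapping statement of Proposition~\ref{prop06021609} to produce a control $u'$ near $u$ hitting a nearby target, whence the one-sided estimate $d_o(q')-d_o(q)\le L\rho(q,q')$. The divergence is in how the radius of validity is handled. You upgrade Proposition~\ref{prop06021609} to a version with $\hat\epsilon$ uniform over $u\in\scr K$, arguing via the norm-boundedness of $\scr K$ (Banach--Steinhaus) and uniform $\Co^2$ bounds on $\End_o$ over norm-bounded sets of controls; this immediately yields a two-sided Lipschitz bound on metric balls of a fixed radius. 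The paper instead accepts a point-dependent radius $\hat\epsilon_q$ --- this is the pointwise estimate \eqref{eq1013} --- and recovers the local $L$-Lipschitz property by chaining the one-sided estimates along a $\rho$-geodesic inside a geodesically convex neighborhood, using a finite subcover of its compact image. Your route buys a shorter and manifestly symmetric final step, but its crux --- that the constants in Proposition~\ref{prop06021609} depend only on $\tau_0$ and on a local $\Co^2$ bound for $\End_o$, both uniform over $\scr K$ --- is asserted rather than derived from the proposition as stated (whose proof the paper omits); this is exactly the uniformity that the paper's chaining argument is designed to avoid needing. The claim is nevertheless true by the standard proof of the quantitative surjective implicit function theorem, so your argument goes through; if you keep this route, you should state and prove the uniform version of Proposition~\ref{prop06021609} explicitly rather than citing the pointwise one.
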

\begin{proof}
	Let $\rho$ be a Riemannian metric on $M$.
	We first prove that 
	\begin{equation}\label{eq1013}
	\begin{array}{c}
	 	\exists L>0,\ \forall q\in K,\ \exists\hat\epsilon_q>0\ \forall q'\in K\\
		\big[\rho(q,q')<\hat\epsilon_q\THEN 
		d_o(q')-d_o(q) \le L\rho(q,q') \big] .
	\end{array}
	\end{equation}
	Since $\scr K$ is a weakly* compact set of regular points for $\End_o$, then by Corollary~\ref{cor06021214} the function $u\mapsto \tau(\dd\End_o(u))$
	 admits a minimum on $\scr K$ that is strictly positive.
	By Proposition~\ref{prop06021609}, there is $L>0$ such that for every $u\in\scr K$ there is $\hat\epsilon_u>0$ such that
	\begin{equation}\label{eq2255}
	 	B_\rho\left( \End_o(u),\epsilon \right) \subset \End_o\left( B_{\LI}(u,L\epsilon) \right)
	\quad\forall \epsilon<\hat\epsilon_u.
	\end{equation}
	Let $q,q'\in K$ be such that $q=\End_o(u)$ with $u\in\scr K$ and $\epsilon := \rho(q,q') <\hat\epsilon_u$. 
	Then, by the inclusion \eqref{eq2255}, there is $u'\in B_{\LI}(u,L\epsilon)$ with $\End_o(u')=q'$.
	So
	\[
	d_o(q')-d_o(q) 
	\le \|u'\|_{\LI} - \|u\|_{\LI} 
	\le \|u'-u\|_{\LI} 
	\le L\epsilon 
	= L\rho(q,q') .
	\]
	This proves the claim \eqref{eq1013}.
	
	Finally, if $p$ is an interior point of $K$, then there is a $\rho$-convex neighborhood $U$ of $p$ contained in $K$ (see \cite{opac-b1127541}).
	So, if $q,q'\in U$, then there is a $\rho$-geodesic $\xi:[0,1]\to U$ joining $q$ to $q'$.
	Since the image of $\xi$ is compact, there are $0= t_1<s_1<t_2<s_2<\dots<s_{k-1}<t_{k}=1$ such that 
	$\rho(\xi(t_i),\xi(s_i)) < \hat\epsilon_{\xi(t_i)}$ and
	$\rho(\xi(s_i),\xi(t_{i+1})) < \hat\epsilon_{\xi(t_{i+1})}$.
	Therefore
	\begin{align*}
	d_o(q')-d_o(q) 
	&\le \sum_{i=1}^{k-1}d_o(\xi(t_{i+1}))-d_o(\xi(s_i)) + d_o(\xi(s_i)) - d_o(\xi(t_{i})) \\
	&\le L\sum_{i=1}^{k-1} \rho(\xi(t_{i+1}),\xi(s_i)) + \rho( \xi(s_i) ,\xi(t_i))
	= L \rho(q,q').
	\end{align*}
\end{proof}

\section{Regularity of spheres in graded groups}\label{sec05111258}
This section is devoted to the proof of Theorems~\ref{teo04221825} and~\ref{teo04221835} and of the inequalities \eqref{eq04282051}.

\subsection{The sphere as a graph}
Let $(G,d)$ be a homogeneous group and $\rho$ a Riemannian distance on $G$.
Theorem~\ref{teo04221825} and the estimate \eqref{eq04282051} are both based on the following remark.
\begin{Rem}\label{rem09051148}
	Let $\g=\oplus_{i>0}V_i$ be a grading for the Lie algebra of $G$.
	Let $|\cdot|$ be the norm of a scalar product on $\g$ that makes the layers orthogonal to each other
	and let $S=\exp(\{v:|v|=1\})\subset G$.
	The hypersurface $S$ is smooth and transversal to the dilations, i.e., for all $p\in S$ we have
	$
	\frac{\dd}{\dd t}|_{t=1} \delta_tp \notin T_pS .
	$
	Define
	\[
	\begin{array}{rccc}
	 	\phi: &S\times(0,+\infty) &\to &G\setminus\{0\}	\\
		& (p,t) &\mapsto &\delta_{\frac1t}p .
	\end{array}
	\]
	Since $S$ is transversal to the dilations, $\phi$ is a diffeomorphism.
	Moreover, if $\Gamma :=\{(p,d_0(p)):p\in S\}\subset S\times(0,+\infty)$ is the graph of the function $d_0$ restricted to $S$, then 
	\[
	\bb S_d = \phi(\Gamma) .
	\]
\end{Rem}

Thanks to 
 the last remark, 
the estimate \eqref{eq04282051} follows from the next lemma. 

\begin{Lem}\label{lem04251012}
	Let $\Omega\subset\R^n$ be an open set and let $f:\Omega\to\R$ be an $\alpha$-Hölder function, i.e., for all $x,y\in\Omega$ we have
	\[
	|f(x)-f(y)| \le C |x-y|^\alpha , 
	\]
	for some $C>0$, where $\alpha\in(0,1]$.	
	Define the graph of $f$ as
	\[
	\Gamma_f := \{(x,f(x)):x\in\Omega\}\subset\R^{n+1} .
	\]	
	Then
	\[
	n \le \dim_H\Gamma_f \le n+1-\alpha ,
	\]
	where $\dim_H$ is the Hausdorff dimension.
	Moreover, this estimate is sharp, i.e., there exists $f$ such that $\dim_H\Gamma_f=n+1-\alpha$.
\end{Lem}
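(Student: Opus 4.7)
The strategy splits naturally into three pieces: the lower bound $n \le \dim_H \Gamma_f$ is free from projection, the upper bound $\dim_H \Gamma_f \le n+1-\alpha$ follows from a standard grid-covering count, and the sharpness claim reduces to the classical one-dimensional Weierstrass example.

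For the lower bound, the projection $\pi:\R^{n+1}\to\R^n$ onto the first $n$ coordinates is $1$-Lipschitz and satisfies $\pi(\Gamma_f)=\Omega$, which has Hausdorff dimension $n$. Since Lipschitz maps do not increase Hausdorff dimension, $\dim_H\Gamma_f\ge n$. For the upper bound I would fix a bounded $\Omega'\subsubset\Omega$ and, for small $\delta>0$, cover $\Omega'$ by at most $C_1\delta^{-n}$ axis-aligned cubes of side $\delta$. On each such cube $Q$ the H\"older assumption gives $\mathrm{osc}(f,Q)\le C(\sqrt n\,\delta)^\alpha$, so the portion of $\Gamma_f$ above $Q$ is contained in a box of side $\delta$ in the first $n$ coordinates and of side at most $C'\delta^\alpha$ in the last coordinate; this box can in turn be chopped into at most $C_2\delta^{\alpha-1}$ cubes of side $\delta$ in $\R^{n+1}$. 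Multiplying, I obtain a cover of $\Gamma_f\cap(\Omega'\times\R)$ by at most $C_3\delta^{-(n+1-\alpha)}$ sets of diameter $\sqrt{n+1}\,\delta$, so $\scr H^s_\delta(\Gamma_f\cap(\Omega'\times\R))\to 0$ as $\delta\to 0$ for every $s>n+1-\alpha$. Exhausting $\Omega$ by an increasing sequence of such $\Omega'$ yields $\dim_H\Gamma_f\le n+1-\alpha$.

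For sharpness I would take the classical Weierstrass function $w(x)=\sum_{k\ge 0} a^{-k\alpha}\cos(a^k x)$ with $a$ a sufficiently large integer; such $w$ is $\alpha$-H\"older on $\R$ and the Hausdorff dimension of its graph in $\R^2$ equals $2-\alpha$ (originally Besicovitch--Ursell, completed recently in full generality by Shen). Setting $f(x_1,\dots,x_n):=w(x_1)$ on $\Omega=(0,1)^n$ produces a graph which is, up to a bi-Lipschitz identification, the Cartesian product of the Weierstrass graph with $[0,1]^{n-1}$; the standard product inequality $\dim_H(A\times B)\ge \dim_H A+\dim_H B$ combined with the already-established upper bound forces $\dim_H\Gamma_f=n+1-\alpha$. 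The main obstacle is precisely this sharpness statement, since the upper and lower bounds themselves are textbook-style covering arguments; the idea is that invoking the one-dimensional Weierstrass result and taking a trivial product in the remaining variables sidesteps any delicate higher-dimensional dimension computation.
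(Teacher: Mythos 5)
Your proposal is correct and follows essentially the same route as the paper: the bounds come from the standard covering argument the paper alludes to, and sharpness is obtained exactly as in the paper by taking a one-dimensional $\alpha$-H\"older function with graph of dimension $2-\alpha$ (Besicovitch--Ursell) and extending it trivially to $n$ variables so that the graph becomes a product with $(0,1)^{n-1}$. Your explicit justification of the product step via the inequality $\dim_H(A\times B)\ge \dim_H A+\dim_H B$ combined with the already-proved upper bound is a welcome filling-in of a detail the paper leaves implicit.
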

The proof is straightforward by use of a simple covering argument or by an estimate of the Minkowski content of the  graph.
The sharpness of this result has been shown in \cite{Besicovitch1937Sets-of-Fractio} for the case $n=1$.
The general case, as stated here, is a simple consequence.
Indeed, if $g:(0,1)\to\R$ is a $\alpha$-Hölder function such that $\dim_H(\Gamma_g)=2-\alpha$, then the graph of the function $f(x_1,\dots,x_n):=g(x_1)$ is $\Gamma_f=\Gamma_g\times(0,1)^{n-1}$.
Therefore, $\dim_H(\Gamma_f) = n+1-\alpha$.


 In the next easy-to-prove lemma we point out that a homogenous distance is locally Lipschitz if and only if the spheres are Lipschitz graphs in the directions of the dilations.
\begin{Lem}\label{lem09051217}
	Let $d$ be a homogeneous distance on $G$. 
	Let $S$ and $\bb S_d$ be as in Remark~\ref{rem09051148} and $p\in S$.
	Then the following conditions are equivalent:
	\begin{enumerate}[label=(\roman*)]
	\item 	Setting $\hat p:=\delta_{d_0(p)^{-1}}(p)\in \bb S_d$, the sphere $\bb S_d$ is
	a Lipschitz graph in 
	the direction $\bar\delta(\hat p)$
	in some neighborhood of $p$;
	\item 	$d_0|_S:S\to(0,+\infty)$ is Lipschitz in some neighborhood of $p$ in~$S$;
	\item 	$d_0$ is Lipschitz in some neighborhood of $\delta_\lambda p$ for one, hence all, $\lambda>0$.
	\end{enumerate}
\end{Lem}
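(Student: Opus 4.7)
The plan is to use the diffeomorphism $\phi:S\times(0,+\infty)\to G\setminus\{0\}$, $(q,t)\mapsto \delta_{1/t}q$, from Remark~\ref{rem09051148} as the common setting in which all three conditions can be translated into statements about the single real-valued function $d_0|_S$. Note that $\phi(p,d_0(p))=\delta_{d_0(p)^{-1}}p=\hat p$, and that, by the chain rule, $\dd\phi(p,d_0(p))[\partial_t]$ is a nonzero multiple of $\bar\delta(\hat p)$, since $\bar\delta(\hat p)=\tfrac{\dd}{\dd t}\big|_{t=1}\delta_t\hat p$ and $\phi(p,t)=\delta_{d_0(p)/t}\hat p$ along the fiber through $p$.

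For the equivalence $(i)\Leftrightarrow (ii)$, I would invoke the identity $\bb S_d=\phi(\Gamma)$ from Remark~\ref{rem09051148}, where $\Gamma=\{(q,d_0(q)):q\in S\}$. Since $\phi$ is a smooth diffeomorphism, it sends a neighborhood of $(p,d_0(p))$ in $S\times(0,+\infty)$ biLipschitzly to a neighborhood of $\hat p$ in $G\setminus\{0\}$, and the fibers $\{q\}\times(0,+\infty)$ are mapped to the dilation orbits, whose tangent at $\hat p$ is $\bar\delta(\hat p)$ by the observation above. Thus $\Gamma$ is a Lipschitz graph over $S$ in a neighborhood of $(p,d_0(p))$ if and only if $\bb S_d$ is a Lipschitz graph in the direction $\bar\delta(\hat p)$ in a neighborhood of $\hat p$; and $\Gamma$ is a Lipschitz graph precisely when $d_0|_S$ is a Lipschitz function.

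For $(ii)\Leftrightarrow (iii)$, the direction $(iii)\Rightarrow(ii)$ is immediate: if $d_0$ is Lipschitz near $\delta_\lambda p$ for some $\lambda>0$, then first, by one-homogeneity $d_0=\lambda^{-1}(d_0\circ\delta_\lambda)$ and smoothness of $\delta_\lambda$, $d_0$ is Lipschitz near $p$ itself; restricting to $S$ gives $(ii)$. For $(ii)\Rightarrow (iii)$, I would write, for $q$ in a small neighborhood $U$ of $p$, the decomposition $q=\phi(q_0,t)$ with $(q_0,t)=\phi^{-1}(q)\in S\times(0,+\infty)$, so that
\[
d_0(q)=d_0(\delta_{1/t}q_0)=\tfrac{1}{t}\,d_0(q_0).
\]
Choosing $U$ with compact closure in which $t$ stays in a compact sub-interval of $(0,+\infty)$, the map $(q_0,t)\mapsto t^{-1}d_0(q_0)$ is Lipschitz because $d_0|_S$ is Lipschitz, $t\mapsto t^{-1}$ is smooth and bounded, and $\phi^{-1}$ is smooth hence locally Lipschitz for $\rho$. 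This yields Lipschitz regularity of $d_0$ in a neighborhood of $p$, and then also near each $\delta_\lambda p$ by transporting via the diffeomorphism $\delta_\lambda$ and using $d_0\circ\delta_\lambda=\lambda\,d_0$.

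The only point requiring a minimum of care is the identification of the direction $\partial_t$ in the $\phi$-coordinates with $\bar\delta(\hat p)$ in $(i)$, and the fact that the statement ``Lipschitz'' is independent of the chosen Riemannian metric $\rho$, which follows locally from the bi-Lipschitz equivalence of any two Riemannian metrics on a compact neighborhood; beyond this, the argument is a direct unwinding of the homogeneity relation through the diffeomorphism $\phi$.
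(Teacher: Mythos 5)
Your proof is correct, and it follows exactly the route the paper intends: the paper calls this lemma ``easy-to-prove'' and omits the argument, but it is stated immediately after Remark~\ref{rem09051148} precisely so that the diffeomorphism $\phi$ and the identity $\bb S_d=\phi(\Gamma)$ can be used to translate all three conditions into the single statement that $d_0|_S$ is locally Lipschitz, which is what you do. Your computation that $\dd\phi(p,d_0(p))[\partial_t]$ is a nonzero multiple of $\bar\delta(\hat p)$, and the use of $d_0(\delta_{1/t}q_0)=t^{-1}d_0(q_0)$ together with homogeneity to pass between $p$ and $\delta_\lambda p$, are exactly the points that make the equivalences work.
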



Thanks to Lemma~\ref{lem09051217},
Theorem~\ref{teo04221825} is a consequence of Theorem~\ref{teo04221818}.

\subsection{An intrinsic approach}
In this section we will prove Theorem~\ref{teo04221835}.
We define a \emph{cone} in $\R^n$ as
\[
\Cone(\alpha,h,v) := \{x\in\R^n:|x|\le h\text{ and }\angle(x,v)\le \alpha\}\subset\R^n ,
\]
where $\alpha\in[0,\pi]$, $h\in(0,+\infty]$, $v\in\R^n$ is the axis of the cone, and $\angle(x,v)$ is the angle between $x$ and $v$.
%
%
The following lemma is a simple calculus exercise and it will be used later in the proof of Theorem~\ref{teo04221835}.

\begin{Lem}\label{lem05291640}
 	Let $m,k,n\in\N$,
	$p\in\R^m$ and $y_0\in\R^k$.
	Let $\phi:\R^m\times\R^k\to\R^n$ be a smooth map such that $\dd(\phi_p)(y_0):\R^k\to\R^n$ is surjective, where $\phi_x(y):=\phi(x,y)$.	
	Let $C'\subset\R^k$ be a cone with axis $v'\in\R^k$.
	Then there exist a cone $C\subset\R^n$ with axis $\dd(\phi_p)(y_0)v'$ and an open neighborhood $U\subset\R^m$ of $p$ such that for all $q\in U$
	\[
	\phi_q(y_0)+C \subset \phi_q(y_0+C') .
	\]
\end{Lem}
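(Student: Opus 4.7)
The strategy is to reduce to an invertible problem by restricting the second variable to a well-chosen $n$-dimensional subspace, apply the implicit function theorem with parameters to obtain a continuous family of local inverses $\Psi_q^{-1}$, and then transfer a cone inclusion from the linear model to the perturbed maps. After translating we may assume $y_0 = 0$. Set $L := \dd(\phi_p)(0)$ and $v := Lv'$. The case $v = 0$ is handled trivially by taking $C := \{0\}$, so assume $v \neq 0$. Then $v' \notin \Ker L$, and since $\dim\Ker L = k - n$ we may choose a linear subspace $W \subset \R^k$ of dimension $n$ with $v' \in W$ and $W \oplus \Ker L = \R^k$; the restriction $L|_W : W \to \R^n$ is then a linear isomorphism carrying $v'$ to $v$.

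Define $\Psi : \R^m \times W \to \R^n$ by $\Psi(x, w) := \phi(x, w) - \phi(x, 0)$. Since $\Psi(p, 0) = 0$ and $\partial_w \Psi(p, 0) = L|_W$ is invertible, the implicit function theorem with parameters produces open neighborhoods $U_0$ of $p$ in $\R^m$, $V$ of $0$ in $W$, and $V'$ of $0$ in $\R^n$ such that, for every $q \in U_0$, the map $\Psi_q := \Psi(q, \cdot)|_V$ restricts to a $\Co^1$-diffeomorphism onto $V'$, and the joint inverse $(q, z) \mapsto \Psi_q^{-1}(z)$ is smooth on $U_0 \times V'$. Since $\partial_z \Psi_q^{-1}$ is continuous in $(q, z)$ and equals $L|_W^{-1}$ at $(p, 0)$, shrinking $U_0$ and $V'$ one may further arrange that $|\Psi_q^{-1}(z) - L|_W^{-1}(z)| \le \eta |z|$ for all $q \in U_0$, $z \in V'$, with $\eta > 0$ as small as desired.

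Because the opening angle of $C'$ is positive, the axis $v'$ lies in the topological interior of $C' \cap W$ inside $W$; hence the linear isomorphism $L|_W^{-1}$ maps some cone $\Cone(\alpha, h, v) \subset \R^n$ strictly into the interior of $C' \cap W$, for appropriate $\alpha, h > 0$. The uniform closeness established above then lets us select $\eta$ sufficiently small, a neighborhood $U \subset U_0$, and if necessary a smaller $h$, so that $\Psi_q^{-1}(\Cone(\alpha, h, v)) \subset C'$ for every $q \in U$. Setting $C := \Cone(\alpha, h, v)$, any $c \in C$ has preimage $w := \Psi_q^{-1}(c) \in C'$, whence $\phi_q(0) + c = \phi_q(w) \in \phi_q(C')$, which is the desired inclusion. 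The main technical point is the uniform-in-$q$ control of the inverses $\Psi_q^{-1}$, which is supplied precisely by the parametric implicit function theorem; the remaining cone comparison is an elementary linear-algebraic perturbation through the fixed isomorphism $L|_W^{-1}$.
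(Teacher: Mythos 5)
The paper gives no proof of this lemma (it is dismissed as ``a simple calculus exercise''), so there is nothing to compare against; your argument is a correct and natural way to fill it in. Restricting the fibre variable to a complement $W$ of $\Ker\,\dd(\phi_p)(y_0)$ containing $v'$, inverting $\Psi_q(w)=\phi_q(y_0+w)-\phi_q(y_0)$ uniformly in $q$ via the parametric inverse function theorem, and then beating the $\eta|z|$ error against the linear-in-$|c|$ distance from $L|_W^{-1}(c)$ to the boundary of $C'\cap W$ is exactly the right quantitative mechanism, and every step checks out. The only point worth making explicit is that you tacitly assume the cone $C'$ has positive opening angle: the paper's definition of $\Cone(\alpha,h,v)$ allows $\alpha=0$, in which case $C'\cap W$ has empty interior in $W$ and the statement itself degenerates (indeed fails for nonlinear $\phi$); since the lemma is only ever applied with cones of positive amplitude, this is a defect of the statement rather than of your proof, but a one-line remark excluding $\alpha=0$ (and covering the degenerate case $v'\in\Ker\,\dd(\phi_p)(y_0)$, as you do) would make the write-up airtight.
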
 
\begin{proof}[Proof of Theorem~\ref{teo04221835}]
	In this proof, we consider the dilations $\delta_\lambda$ as defined for $\lambda\le0$ too, with the same definition as for $\lambda>0$.
	Notice that in this way the map $G\times\R\to G$, $(p,\lambda)\mapsto \delta_\lambda p$, is a smooth map.
		
 	Let $v_1,\dots,v_r$ be a basis for $V_1$ and set $p_i :=\exp(v_i)\in G$.
	Up to a rescaling, we can assume $d_0(p_i)<1$ for all $i$.	
	For $p\in G$ define $\phi_p:\R^{2r+1}\to G$ as
	\[
	\phi_p(u_1,\dots,u_r,s,v_1,\dots,v_r)
	= \delta_{u_1}p_1\cdots\delta_{u_r}p_r\cdot\delta_sp\cdot\delta_{v_1}p_1\cdots\delta_{v_r}p_r .
	\]
	Let $\hat x\in\R^{2r+1}$ be the point with $u_i=0$, $s=1$ and $v_i=0$, so that $\phi_p(\hat x) = p$.
	The differential of $\phi_p$ at $\hat x$ is given by the partial derivatives
	\begin{align*}
	\frac{\de\phi_p}{\de u_i}(\hat x) 
	&= \frac{\dd}{\dd t}|_{t=0}\left(\delta_tp_i\cdot p\right) 
	= \dd R_p\left(\frac{\dd}{\dd t}|_{t=0}(\delta_tp_i) \right)
	= \dd R_p(v_i) , \\
	\frac{\de\phi_p}{\de s}(\hat x) 
	&= \frac{\dd}{\dd t}|_{t=1}\left(\delta_tp\right) 
	= \bar\delta(p) , \\
	\frac{\de\phi_p}{\de v_i}(\hat x) 
	&= \frac{\dd}{\dd t}|_{t=0}\left(p\cdot\delta_tp_i\right) 
	= \dd L_p\left(\frac{\dd}{\dd t}|_{t=0}(\delta_tp_i) \right)
	= \dd L_p(v_i) .
	\end{align*}
	Therefore, if $p\in\bb S_d$ is such that the condition \eqref{eq04221835} is true, then the differential $\dd\phi_p$ has full rank at $\hat x$, hence in a neighborhood of $\hat x$.
	
	Define
	\[
	\Delta := \{(u_1,\dots,u_r,s,v_1,\dots,v_r)\in\R^{2r+1}:s+\sum_{i=1}^r(|u_i|+|v_i|) \le 1\} .
	\]
	We identify $G$ with $\R^n$ through an arbitrary diffeomorphism.
	So, by Lemma~\ref{lem05291640}, there is a cone $C$ with axis $\bar\delta(p)$ and a neighborhood $U$ of $p$ such that  for all $q\in U$
	\[
	q+C \subset \phi_q(\Delta) .
	\]
	Up to restricting $U$, for all $q\in U$ there are cones $C_q$ with axis $\bar\delta(q)$, fixed amplitude and fixed height 
	such that $q+C_q\subset q+C$.
	Notice that for all $q\in\bb S_d$ we have $\phi_q(\Delta)\subset \bar B_d(0,1)$ and $\phi_q(\Delta) \cap \bb S_d = \{q\}$. 
	In particular, for all $q\in\bb S_d\cap U$, we have $q+C_q \subset\bar B_d(0,1)$ and $(q+C_q)\cap\bb S_d = \{q\}$, i.e., $\bb S_d\cap U$ is a Lipschitz 
	graph
	in the direction of the dilations.
	Thanks to Lemma~\ref{lem09051217}, we get that $d_0$ is Lipschitz in a neighborhood of $p$.
\end{proof}

Finally, some considerations on condition \eqref{eq04221835} are due.
\begin{Rem}\label{rem07101129}
	If \eqref{eq04221835} holds at $p\in G$ and $u\in\LI([0,1];V_1)$ is a control such that $\End_0(u)=p$, then the differential $\dd\End_0(u)$ is surjective, i.e., $p$ is a regular value of $\End$.
	Indeed, by \cite{LeDonne2015} (see (2.6) and (2.11) there), we have
	\[
	\dd L_p(V_1) + \dd R_p(V_1) + \Span\{\bar \delta(p)\} \subset \Im(\dd\End_0(u)) ,
	\]
	because $q\mapsto\bar\delta(q)$ is a contact vector field of $G$.
\end{Rem}
\begin{Prop}
	Let $X\in V_1$.
 	If \eqref{eq04221835} holds for $p=\exp(X)$, then
	\[
	\g = V_1 + [X,V_1] .
	\]
\end{Prop}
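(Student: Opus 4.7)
The plan is to rewrite condition \eqref{eq04221835} as a purely algebraic identity on $\g$ and then extract the conclusion by matching grading components.

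First I would observe that since $X\in V_1$, one has $\delta_{t*}X=tX$, hence $\delta_t(p)=\exp(tX)$ and therefore $\bar\delta(p)=\frac{\dd}{\dd t}|_{t=1}\exp(tX)=\dd L_p(X)\in\dd L_p(V_1)$. So the third summand in \eqref{eq04221835} is already contained in the first, and the condition reduces to $\dd L_p(V_1)+\dd R_p(V_1)=T_pG$. Applying $\dd L_{p^{-1}}$ and using the identity $\dd L_{p^{-1}}\circ\dd R_p=\Ad(p^{-1})$, this becomes
$$V_1+\Ad(p^{-1})(V_1)=\g.$$
Since $p^{-1}=\exp(-X)$ and $G$ is nilpotent, $\Ad(p^{-1})=e^{-\ad X}$ is a finite sum, and $(\ad X)^k V_1\subseteq V_{k+1}$. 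Consequently, for any $b\in V_1$ the $V_1$-component of $\Ad(p^{-1})b$ equals $b$ and the $V_j$-component equals $\tfrac{(-1)^{j-1}}{(j-1)!}(\ad X)^{j-1}b$ for $j\ge2$.

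The key step would then be to pick $\xi\in V_k$ for $k\ge3$ and write $\xi=a+\Ad(p^{-1})b$ with $a,b\in V_1$. Matching $V_1$-components gives $a+b=0$, and matching $V_2$-components gives $[X,b]=0$; but then $(\ad X)^j b=0$ for all $j\ge1$, so the $V_k$-component of $\Ad(p^{-1})b$ vanishes, forcing $\xi=0$. Hence $V_k=\{0\}$ for all $k\ge3$, i.e., $\g=V_1\oplus V_2$. Finally, letting $\xi$ range over $V_2$, the same expansion yields $\xi=-[X,b]$ for some $b\in V_1$, so $V_2\subseteq[X,V_1]$, and thus $\g=V_1+[X,V_1]$.

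The main obstacle I anticipate is recognising how much the grading-matching step already says: from the bare surjectivity $V_1+\Ad(p^{-1})(V_1)=\g$ it is immediate to extract $V_2\subseteq[X,V_1]$, but the stronger fact $V_k=\{0\}$ for $k\ge3$ comes from the \emph{joint} compatibility, with a common parameter $b\in V_1$, of the $V_1$-, $V_2$- and $V_k$-equations; one cannot simply look at each degree in isolation. Once this mechanism is made explicit, the remaining computations are routine algebra in the graded Lie algebra.
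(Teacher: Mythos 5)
Your argument is correct and follows essentially the same route as the paper: both reduce \eqref{eq04221835} to $V_1+\Ad_{\exp(\pm X)}(V_1)=\g$ (after observing that $\bar\delta(p)=\dd L_p(X)$ is redundant) and then expand $\Ad_{\exp(\pm X)}=e^{\pm\ad_X}$, so that every correction to the identity map on $V_1$ is generated by $[X,V_1]$. The only divergence is the final bookkeeping --- the paper closes with the dimension count $\dim\g\le\dim V_1+\dim[X,V_1]$, whereas you match graded components; if you keep the latter, note that the same matching also kills any non-integer layers $V_s$ (which the paper's definition of grading permits), since $a+\Ad(p^{-1})b$ has components only in integer degrees.
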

\begin{proof}
	Let $X_1,\dots,X_r$ be a basis for $V_1$ and $Y_1,\dots,Y_\ell$ a basis for $[X,V_1]$.
	Let $\alpha_j^i\in\R$ be such that $[X,X_i]=\sum_{j=1}^\ell \alpha^i_jY_j$.	
	First, notice that		
	\begin{align*}
	T_0G 
	&= \dd L_{\exp(-X)} \left( \dd L_{\exp(X)}(V_1) + \dd R_{\exp(X)}(V_1) \right) \\
	&= V_1 + \dd L_{\exp(-X)} \circ\dd R_{\exp(X)} (V_1) \\
	&= V_1 + \Ad_{\exp(X)} (V_1).
	\end{align*}
	Then, using the formula $\Ad_{\exp(X)}(Y) = e^{\ad_X}(Y) = \sum_{k=0}^\infty\frac1{k!} \ad_X^k(Y)$, we have
	\begin{align*}
	\Ad_{\exp(X)}(X_i) 
	&= X_i + \left( \sum_{k=1}^\infty\frac1{k!} \ad_X^{k-1}([X,X_i]) \right) \\
	&= X_i + \left( \sum_{k=1}^\infty \ad_X^{k-1}(\sum_{j=1}^\ell \alpha_j^iY_j) \right) \\
	&= X_i + \sum_{j=1}^\ell \alpha_j^i \left( \sum_{k=1}^\infty \ad_X^{k-1}(Y_j) \right) .
	\end{align*}
	It follows that $\dim\left(V_1+\Ad_{\exp(X)} V_1\right) \le r + \ell$ and therefore $\dim\g\le r+\ell$, i.e., $\g=V_1+[X,V_1]$.
\end{proof}
\begin{Prop}
	Let $Z\in V_k$, where $k>0$ is such that $V_i=\{0\}$ for all $i>k$.
	If \eqref{eq04221835} holds for $p=\exp(Z)$, then
	\[
	\g = V_1 + \Span\{Z\} .
	\]
\end{Prop}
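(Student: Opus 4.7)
The plan is to mimic the strategy of the previous proposition: pull condition \eqref{eq04221835} back to the identity via $\dd L_{p^{-1}}$, turning the sum of tangent spaces at $p$ into a sum of subspaces of $\g=T_0G$. Since $\dd L_{p^{-1}}|_p\circ \dd L_p = \Id$ and $\dd L_{p^{-1}}|_p\circ \dd R_p = \Ad_{p^{-1}}$, equation \eqref{eq04221835} becomes
\[
V_1 + \Ad_{p^{-1}}(V_1) + \Span\{\dd L_{p^{-1}}(\bar\delta(p))\} = \g.
\]

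Next I would exploit the hypothesis $Z\in V_k$ with $V_i=\{0\}$ for $i>k$ in two separate computations. First, for the adjoint term, use $\Ad_{\exp(-Z)} = e^{-\ad_Z}$ expanded as a power series. For any $X\in V_1$, the first bracket $[Z,X]$ lies in $V_{k+1}=\{0\}$, so every higher order term vanishes and $\Ad_{p^{-1}}(X)=X$. Hence $\Ad_{p^{-1}}(V_1)=V_1$, and the contribution from the right-translations is absorbed by the first summand.

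Second, for the dilation term, compute $\bar\delta(p)$ explicitly. Since $\delta_t$ acts on $V_k$ as multiplication by $t^k$ and $\exp$ intertwines $\delta_t$ with $(\delta_t)_*$, we have $\delta_t(\exp Z)=\exp(t^k Z)$. Differentiating at $t=1$ and using the fact that along a one-parameter subgroup $\dd\exp_{Z}(Z)=\dd L_{\exp(Z)}(Z)$, one gets $\bar\delta(p)=k\,\dd L_p(Z)$, hence $\dd L_{p^{-1}}(\bar\delta(p)) = kZ$.

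Substituting these two computations into the pulled-back equation gives $V_1 + \Span\{kZ\} = \g$, which is the desired identity $\g = V_1 + \Span\{Z\}$. No step looks delicate: the only things one has to be careful with are the identification $\dd L_{p^{-1}}\circ \dd R_p = \Ad_{p^{-1}}$ at the level of $0\in G$, and the chain rule computation for $\bar\delta(\exp Z)$; both are standard. The essential input, much milder than in the previous proposition, is simply that $Z$ sits in the top layer so that $\ad_Z$ kills $V_1$.
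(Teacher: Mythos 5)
Your proof is correct and follows essentially the same route as the paper: the paper simply observes that $Z$ is central (so $R_p=L_p$) and that $\bar\delta(p)=\dd L_p(kZ)$, whereupon \eqref{eq04221835} reads $\dd L_p(V_1+\Span\{Z\})=T_pG$; your computation of $\Ad_{p^{-1}}(V_1)=V_1$ via $e^{-\ad_Z}$ is just the infinitesimal version of the same centrality observation.
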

\begin{proof}
	Since $[Z,\g]=\{0\}$, we have $R_p=L_p$.
	Moreover, $\bar\delta(p)=\dd L_p(kZ)$.
	So, condition \eqref{eq04221835} becomes $\dd L_p (V_1) + \dd L_p (\Span\{Z\}) = T_pG$.
\end{proof}
	In particular, if \eqref{eq04221835} holds for all $p\in G\setminus\{0\}$, then $\g=V_1\oplus V_2$ with $\dim V_2\le1$ and $[X,V_1]=V_2$ for all non-zero $X\in V_1$.

\section{Examples}\label{sec07101128}

\subsection{Three gradings on $\R^2$}
We will present three examples of dilations on $\R^2$.
In particular we want to illustrate two applications of Theorem~\ref{teo04221835} and show the sharpness of the dimension estimate \eqref{eq04282051}.
In Remark~\ref{rem09051840} we give an easy example of a homogeneous distance whose unit ball is a Lipschitz domain, but the distance is not locally Lipschitz away from the diagonal.


The first and the easiest is 
\[
\delta_\lambda(x,y) := (\lambda x,\lambda y),
\]
which gives rise to the known structure of vector space.
Here, homogeneous distances are given by norms and balls are convex, hence Lipschitz domains.
It's trivial to see that condition \eqref{eq04221835} holds for all $p\in\R^2$.

The second example is given by the dilations 
\[
\delta_\lambda(x,y) := (\lambda x,\lambda^2 y) .
\]
In this case, $\R^2 = V_1\oplus V_2$ with $V_1=\R\times\{0\}$ and $V_2 = \{0\}\times\R$, and $\bar\delta(x,y)=(x,2y)$.
Condition \eqref{eq04221835} holds for all $(x,y)\in\R^2$ with $y\neq0$.
One can actually show that, for any homogeneous metric on $(\R^2,\delta_\lambda)$ with closed unit ball $B$ centered at $0$, the set $I=\{x\in\R:(x,0)\in B\}$ is a closed interval and
there exists a function $f:I\to\R$ that is locally Lipschitz on the interior of $I$
%
such that
\[
\bb S_d\cap\{(x,y):y\ge0\} = \{(x,f(x)): x\in I\}. 
\]
We will prove a similar statement in the Heisenberg group with an argument that applies here too, see Section~\ref{sec05261301}.

The third example is given by the dilations
\begin{equation}\label{eq04210906}
\delta_\lambda(x,y) := (\lambda^2 x,\lambda^2 y) ,
\end{equation}
and it is interesting because of the next proposition.
\begin{Prop}\label{prop05281759}
 	There exists a homogeneous (with respect to dilations \eqref{eq04210906}) distance $d$ on $\R^2$ whose unit sphere has Euclidean Hausdorff dimension  $\frac32$.
\end{Prop}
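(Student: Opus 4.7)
The upper bound $\dim_H^\rho\bb S_d\le\tfrac32$ is an immediate instance of \eqref{eq04282051} with maximal degree $s=2$, so the entire content of the statement is to build a homogeneous distance $d$ for which equality holds. My strategy is to construct the unit ball of $d$ directly, starting from a Besicovitch-type H\"older function on the Euclidean unit circle, and then to appeal to Lemma~\ref{lem05282259}.

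The starting input is a continuous function $w\colon S^1\to[-1,1]$ that is invariant under $\hat p\mapsto-\hat p$, is $\tfrac12$-H\"older with some constant $C>0$, and whose graph $\Gamma_w\subset S^1\times\R$ has Euclidean Hausdorff dimension $\tfrac32$. Its existence follows from \cite{Besicovitch1937Sets-of-Fractio}: one produces such a function on a half-circle and extends it antipodally. For a small parameter $\epsilon>0$, set
\[
R(\hat p):=1+\epsilon w(\hat p)\in[1-\epsilon,1+\epsilon],\qquad B_\epsilon:=\{p\in\R^2\setminus\{0\}:|p|\le R(\hat p)\}\cup\{0\}.
\]
The set $B_\epsilon$ is compact, symmetric under $p\mapsto-p$, and contains $0$ in its interior. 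Since $(\R^2,\delta)$ is abelian written additively, Lemma~\ref{lem05282259} tells us that $B_\epsilon$ is the unit ball of a homogeneous distance if and only if $z_t:=t^2p+(1-t)^2q\in B_\epsilon$ for all $p,q\in B_\epsilon$ and $t\in[0,1]$.

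I would verify this $\delta$-convexity by splitting the range of $t$ into a \emph{bulk regime} $t\in[\eta,1-\eta]$ and an \emph{endpoint regime} $t\in[0,\eta]\cup[1-\eta,1]$, choosing $\eta$ and $\epsilon$ compatibly. In the bulk, the trivial estimate $|z_t|\le(t^2+(1-t)^2)\max(|p|,|q|)\le(1-2\eta(1-\eta))(1+\epsilon)$ is $\le 1-\epsilon\le R(\widehat{z_t})$ as soon as $\epsilon$ is small compared to $\eta(1-\eta)$. For $t=1-s$ near $1$ (the case $t$ near $0$ being symmetric), the rewriting $z_{1-s}=(1-2s)p+s^2(p+q)$ and a short Taylor expansion give
\[
|z_{1-s}|=|p|(1-2s)+O(s^2),\qquad \arg z_{1-s}-\arg p=O(s^2),
\]
so the H\"older bound $|w(\widehat{z_{1-s}})-w(\hat p)|\le C|\arg z_{1-s}-\arg p|^{1/2}$ yields, for boundary $p$,
\[
R(\widehat{z_{1-s}})-|z_{1-s}|\ge 2s|p|-\epsilon C\cdot O(s)-O(s^2),
\]
which is nonnegative for all small $s$ provided $\epsilon$ is small relative to $C$. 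A choice $\eta\sim\sqrt\epsilon$ matches the two regimes and yields the $\delta$-convexity of $B_\epsilon$ for every sufficiently small $\epsilon>0$.

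Once Lemma~\ref{lem05282259} furnishes the corresponding homogeneous distance $d$, its unit sphere $\bb S_d=\{R(\hat p)\hat p:\hat p\in S^1\}$ is the image of $\Gamma_w$ under the smooth map $\Phi(\hat p,y):=(1+\epsilon y)\hat p$, whose Jacobian determinant equals $-\epsilon(1+\epsilon y)$ and hence is nonzero on a neighborhood of $\Gamma_w$ for small $\epsilon$; thus $\Phi$ is a bi-Lipschitz embedding there. Since bi-Lipschitz maps preserve Hausdorff dimension, $\dim_H^\rho\bb S_d=\dim_H\Gamma_w=\tfrac32$. The main obstacle in the whole argument is the endpoint regime $t\to 0,1$, where the margin $R(\widehat{z_t})-|z_t|$ degenerates: one must quantify that the linear-in-$s$ drop of $|z_{1-s}|$ produced by the $(1-2s)$ factor dominates the H\"older-induced angular error $\epsilon C\cdot O(s)$, which is precisely what forces the smallness of $\epsilon$ in terms of the H\"older constant of $w$.
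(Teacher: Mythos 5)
Your construction is correct in substance but follows a genuinely different route from the paper's. The paper never verifies condition \eqref{eq05281112} for a radial-graph ball directly: it first shows (Lemmas~\ref{lem05281120} and~\ref{lem05281752}) that the explicit ``square-root cusp'' regions $\{|x|\le\epsilon,\ y\le\beta+C\sqrt{|x|}\}$ satisfy \eqref{eq05281112} --- exploiting the fact that the parabola $t\mapsto t^2p+(1-t)^2q$ is tangent to the rays $\Span\{p\}$ and $\Span\{q\}$ (Lemma~\ref{lem05281521}) --- and then assembles the unit ball as an intersection of rotated copies of such regions, each one supporting the fractal curve $f(\theta)(\cos\theta,\sin\theta)$ from outside; since \eqref{eq05281112} is preserved by linear images and intersections, no further estimate is needed. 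You instead perturb the round disk radially by $\epsilon w$ and check \eqref{eq05281112} head-on via a bulk/endpoint splitting. This works and is arguably more self-contained (your ball is a global radial graph over $S^1$, whereas the paper's ball carries the fractal curve only on a small angular arc, which suffices for the dimension count), but it shifts the entire burden onto uniform error control, and two points need patching: (i) in the endpoint regime you only treat boundary points $p$, while \eqref{eq05281112} must hold for all $p,q\in B_\epsilon$; the fix is the dichotomy that either $(1-s)^2|p|\le 1-\epsilon-s^2(1+\epsilon)$, in which case $|z_{1-s}|\le 1-\epsilon$ trivially, or $|p|$ is bounded below, in which case your angle estimate $O(s^2)$ together with $R(\hat p)\ge|p|$ makes the same computation go through; (ii) the antipodal extension of the Besicovitch function from a half-circle must be glued so as to stay $\tfrac12$-H\"older at the seam, e.g.\ by damping $w$ near the endpoints of the arc while keeping the dimension-$\tfrac32$ portion in the interior --- the analogue of the paper retaining the curve only for $|t|<\theta_0/2$. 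The trade-off is clear: the paper's decomposition isolates the sharp extremal profile $y\le1+C\sqrt{|x|}$ with $C\le1$ (reused in Remark~\ref{rem09051840}) at the price of an indirect construction, while your approach is direct but hinges on the quantitative smallness of $\epsilon$ relative to the H\"older constant of $w$, which you correctly identify as the crux.
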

Notice that $\frac32$ is the maximal Hausdorff dimension that one gets by the estimate \eqref{eq04282051}.

For proving Proposition~\ref{prop05281759}, 
we need to find a set $B\subset\R^2$ that satisfies all four conditions
listed in Lemma~\ref{lem05282259}, in particular
\begin{equation}\label{eq05281112}
 	\forall p,q\in B,\ \forall t\in[0,1]\quad t^2p + (1-t)^2q \in B .
\end{equation} 

One easily proves the following preliminary facts.
\begin{Lem}\label{lem05281521}
	Let $p,q\in\R^2$ and $\gamma:[0,1]\to\R^2$, $\gamma(t) := t^2p + (1-t)^2q$.
	\begin{enumerate}
	\item 	The curve $\gamma$ is contained in the triangle of vertices $0,p,q$.
	\item 	The curve $\gamma$ is an arc of the parabola passing through $p$ and $q$ and that is tangent to the lines $\Span\{p\}$ and $\Span\{q\}$.
	\item 	If $B$ satisfies \eqref{eq05281112} and $A:\R^2\to\R^2$ is a linear map, then $A(B)$ satisfies \eqref{eq05281112} as well.
	\end{enumerate}
\end{Lem}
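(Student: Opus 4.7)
The plan is to verify the three assertions with elementary computations; each reduces to a short calculation in the coefficients of $p$ and $q$.

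For (1), note that the (closed) triangle with vertices $0,p,q$ is the convex hull $\{\alpha p+\beta q:\alpha,\beta\ge 0,\ \alpha+\beta\le 1\}$. Since $t^2,(1-t)^2\ge 0$ for $t\in[0,1]$, it suffices to check that $t^2+(1-t)^2\le 1$ on $[0,1]$. This is immediate from
\[
t^2+(1-t)^2-1=2t(t-1)\le 0\qquad\text{for }t\in[0,1].
\]

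For (2), I would first differentiate to obtain $\gamma'(t)=2tp-2(1-t)q$, so $\gamma'(0)=-2q\in\Span\{q\}$ and $\gamma'(1)=2p\in\Span\{p\}$; together with $\gamma(0)=q$ and $\gamma(1)=p$ this is exactly the tangency statement. To recognize the image as a parabolic arc in the generic case when $p,q$ are linearly independent, I would work in the basis $(p,q)$: writing $\gamma(t)=a(t)p+b(t)q$ with $a(t)=t^2$, $b(t)=(1-t)^2$, the identity $\sqrt{a}+\sqrt{b}=1$ on $[0,1]$ gives, after two squarings, the conic
\[
(a-b)^2-2(a+b)+1=0.
\]
The associated quadratic form has zero discriminant, so the curve is a parabola. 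In the degenerate case $p\parallel q$ the image collapses to a segment, which I would treat as a limiting instance.

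Finally, (3) is a one-line consequence of linearity:
\[
A\bigl(t^2 p+(1-t)^2 q\bigr)=t^2 A(p)+(1-t)^2 A(q),
\]
so if $p,q\in B$ implies the left-hand side belongs to $B$, then for $A(p),A(q)\in A(B)$ the right-hand side belongs to $A(B)$, which is precisely \eqref{eq05281112} for $A(B)$.

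I expect no real obstacle; the only mild subtlety is what ``parabola'' means in the degenerate configuration $p\parallel q$, which I would resolve by restricting attention to the generic case, since this item is used only as geometric intuition for the subsequent construction of the set $B$ in Proposition~\ref{prop05281759}.
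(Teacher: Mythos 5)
Your proof is correct; the paper itself gives no argument for this lemma (it is stated as an ``easily proved preliminary fact''), and your elementary computations are exactly the intended verification. The only point you could tighten is in (2): having shown the image satisfies the conic equation $(a-b)^2-2(a+b)+1=0$ with degenerate quadratic part, one should also note the full $3\times3$ matrix of the conic has nonzero determinant, so the conic is genuinely a parabola and not a degenerate line pair; this, together with your remark on the collinear case, closes the argument.
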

%
\begin{Lem}\label{lem05281120}
 	For $0<C\le1$, define
	\[
	Y_C :=\{(x,y):|x|\le1,\ y\le 1+C\sqrt{|x|}\}.
	\]
	Then $Y_C$ satisfies \eqref{eq05281112}.
\end{Lem}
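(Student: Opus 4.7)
Set $a=t^2$ and $b=(1-t)^2$, so that $\sqrt a+\sqrt b=1$ and hence $a+b=1-2\sqrt{ab}\le 1$. For $p=(x_p,y_p),q=(x_q,y_q)\in Y_C$, the first coordinate of $z=t^2p+(1-t)^2q$ satisfies $|ax_p+bx_q|\le a|x_p|+b|x_q|\le a+b\le 1$. Combining $y_p\le 1+C\sqrt{|x_p|}$, $y_q\le 1+C\sqrt{|x_q|}$, and $a+b=1-2\sqrt{ab}$, the desired bound $ay_p+by_q\le 1+C\sqrt{|ax_p+bx_q|}$ on the second coordinate is equivalent to
\[
C\bigl(a\sqrt{|x_p|}+b\sqrt{|x_q|}-\sqrt{|ax_p+bx_q|}\bigr)\le 2\sqrt{ab}. \qquad (\star)
\]
The plan is to prove $(\star)$ by splitting into cases based on the signs of $x_p$ and $x_q$.

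If $x_p x_q\ge 0$ then $|ax_p+bx_q|=a|x_p|+b|x_q|$, and a direct expansion yields
\[
\bigl(a\sqrt{|x_p|}+b\sqrt{|x_q|}\bigr)^2-\bigl(a|x_p|+b|x_q|\bigr)=-a(1-a)|x_p|-b(1-b)|x_q|+2ab\sqrt{|x_p x_q|}.
\]
The bounds $1-a\ge b$ and $1-b\ge a$, both consequences of $a+b\le 1$, make this quantity $\le -ab(\sqrt{|x_p|}-\sqrt{|x_q|})^2\le 0$. Hence $a\sqrt{|x_p|}+b\sqrt{|x_q|}\le\sqrt{|ax_p+bx_q|}$ and $(\star)$ is trivial.

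The hard part is the opposite-sign case. After reflecting across the $y$-axis and possibly swapping $p\leftrightarrow q$, both of which preserve $Y_C$ and the validity of \eqref{eq05281112}, one may assume $x_p\ge 0\ge x_q$ and $ax_p\ge -bx_q$. Setting $u=x_p$ and $v=-x_q$, both in $[0,1]$, and using $C\le 1$, it suffices to show that
\[
h(u,v):=a\sqrt u+b\sqrt v-\sqrt{au-bv}\le 2\sqrt{ab}
\]
on the region $R=\{(u,v)\in[0,1]^2:au\ge bv\}$. One computes $\partial_u h\le 0$ (because $\sqrt{au-bv}\le\sqrt u$) and $\partial_v h\ge 0$, so the supremum of $h$ on $R$ is attained at one of the three feasible corners $(b/a,1)$, $(1,a/b)$, or $(1,1)$, at which $h$ equals $\sqrt{ab}+b$, $a+\sqrt{ab}$, and $a+b-\sqrt{a-b}$, respectively. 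The first two bounds reduce to $\sqrt b\le\sqrt a$ and $\sqrt a\le\sqrt b$, each holding in its feasibility regime; the third, in the variables $s=\sqrt a$ and $t=\sqrt b$ with $s+t=1$ and $s\ge t$, becomes $(s-t)^2\le\sqrt{s-t}$, i.e.\ $(s-t)^3\le 1$, which is immediate. This completes the proof.
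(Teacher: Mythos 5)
Your proof is correct, but it follows a genuinely different route from the paper's. The paper's argument is geometric: it invokes Lemma~\ref{lem05281521} to confine the parabolic arc $t\mapsto t^2p+(1-t)^2q$ to the triangle with vertices $0,p,q$, disposes of the case $x_px_q\ge0$ by observing that $Y_C\cap\{x\ge0\}$ and $Y_C\cap\{x\le0\}$ are convex, and in the mixed-sign case reduces everything to checking the \emph{single} point where the arc crosses the $y$-axis, computing $t_0=\sqrt{q_x}/(\sqrt{q_x}+\sqrt{p_x})$ explicitly and verifying $\gamma_y(t_0)\le1$. You instead verify the defining inequality of $Y_C$ at \emph{every} point of the arc, which costs you the two-variable optimization of $h$ but buys self-containedness: you never need the triangle-containment lemma or the convexity of the half-sets, and your same-sign case (the concavity estimate $a\sqrt{|x_p|}+b\sqrt{|x_q|}\le\sqrt{a|x_p|+b|x_q|}$) is a clean algebraic substitute for the convexity argument. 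Both proofs ultimately rest on the same two ingredients — substituting the worst case $y=1+C\sqrt{|x|}$ and using $C\le1$ together with $\sqrt a+\sqrt b=1$ at the last step — so the computations are cousins, but the reductions are organized quite differently. One small point to tighten: the claim that the supremum of $h$ on $R$ sits at one of the listed corners does not follow from separate monotonicity in $u$ and $v$ alone, since $R$ is not a product region; the clean justification is that decreasing $u$ at fixed $v$ increases $h$ until the constraint $au=bv$ is reached, where $h=(\sqrt{ab}+b)\sqrt v$ is increasing in $v$, so the supremum is $(\sqrt{ab}+b)\sqrt{\min(1,a/b)}$ — which in fact shows the corner $(1,1)$ need not be checked at all.
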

\begin{proof}
 	Let $p,q\in Y_C$ and set $\gamma(t)= (\gamma_x(t),\gamma_y(t)):=t^2 p+ (1-t)^2q$.
	
	If both $p$ and $q$ stay on one side with respect to the vertical axis, then $\gamma(t)\in Y_C$ for all $t\in[0,1]$ thanks to the first point of Lemma~\ref{lem05281521} and because the two sets $Y_C\cap\{x\ge0\}$ and $Y_C\cap\{x\le0\}$ are convex.
		
	Therefore, we suppose that
	\[
	p = (-p_x,p_y) \qquad q=(q_x,q_y)
	\]
	with $p_x,q_x>0$.
	Let $t_0\in[0,1]$ be the unique value such that $\gamma_x(t_0)=0$.
	Then the curve $\gamma$ lies in the union of the two triangles with vertices $0,\gamma(0),\gamma(t_0)$ and $0,\gamma(1),\gamma(t_0)$, respectively.
	Therefore, $\gamma$ lies in $Y_C$ if and only if $\gamma_y(t_0)\le 1$.
	Solving the equation $\gamma_x(t_0)=t^2_0 (q_x-p_x) - 2q_x t_0 + q_x=0$, one gets
	\[
	t_0 = \frac{ \sqrt{q_x} }{ \sqrt{q_x} + \sqrt{p_x} },
	\qquad
	(1-t_0) = \frac{ \sqrt{p_x} }{ \sqrt{q_x} + \sqrt{p_x} } .
	\]
	From the expression of $\gamma_y(t_0) = t_0^2 p_y + (1-t_0)^2 q_y$,
	we notice that, $p_x$ and $q_x$ fixed, the worst case is when $p_y$ and $q_y$ are maximal, i.e.,
	\[
	p_y = 1+C\sqrt{p_x}, \qquad q_y = 1+C\sqrt{q_x} .
	\]
	
	Finally
	\begin{align*}
	\gamma_y(t_0) 
	&= t_0^2 p_y + (1-t_0)^2 q_y  \\
	&= \frac{ q_x }{ (\sqrt{q_x} + \sqrt{p_x})^2 } ( 1+C\sqrt{p_x} ) +  \frac{ p_x }{ (\sqrt{q_x} + \sqrt{p_x})^2 } ( 1+C\sqrt{q_x} )  \\
	&= \frac{1}{ (\sqrt{q_x} + \sqrt{p_x})^2 } \left( q_x+p_x+ Cq_x\sqrt{p_x} + Cp_x\sqrt{q_x} \right)  \\
	&= 1 + \frac{ -2\sqrt{p_xq_x} + Cq_x\sqrt{p_x} + Cp_x\sqrt{q_x} }{ (\sqrt{q_x} + \sqrt{p_x})^2 }  \\
	&= 1 + \sqrt{p_xq_x} \frac{ -2 + C(\sqrt{q_x} + \sqrt{p_x}) }{ (\sqrt{q_x} + \sqrt{p_x})^2 } .
	\end{align*}
	Since $-2 + C(\sqrt{q_x} + \sqrt{p_x})\le 0$, then we have $\gamma_y(t_0) \le 1$, as wanted.
\end{proof}
\begin{Lem}\label{lem05281752}
 	Let $\alpha,\beta>0$. 
	For all $0<\epsilon\le \alpha$ and all $0< C\le\frac{\beta}{\sqrt\alpha}$, the set
	\[
	Y(\epsilon,\beta,C) := \{(x,y): |x|\le \epsilon,\ y\le \beta + C \sqrt{|x|}
	\]
	satisfies \eqref{eq05281112}.
\end{Lem}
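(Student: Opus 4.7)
The plan is to reduce the claim to Lemma~\ref{lem05281120} by applying the linearity property in Lemma~\ref{lem05281521}(3). Concretely, I want to find a diagonal linear map $A:\R^2\to\R^2$ that carries some $Y_{C_0}$ (with $0<C_0\le 1$) onto $Y(\epsilon,\beta,C)$. Since \eqref{eq05281112} is preserved by linear maps and holds for $Y_{C_0}$ by Lemma~\ref{lem05281120}, this will give the desired conclusion.

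The natural choice is $A(x,y):=(\epsilon x,\beta y)$. A direct change of variables $u=\epsilon x$, $v=\beta y$ shows that
\[
A(Y_{C_0}) \;=\; \Bigl\{(u,v):|u|\le \epsilon,\ v\le \beta + \tfrac{\beta C_0}{\sqrt{\epsilon}}\sqrt{|u|}\Bigr\} \;=\; Y\!\left(\epsilon,\beta,\tfrac{\beta C_0}{\sqrt{\epsilon}}\right).
\]
So I solve $C=\beta C_0/\sqrt{\epsilon}$ for $C_0$, obtaining $C_0=C\sqrt{\epsilon}/\beta$. It remains to check that this $C_0$ lies in $(0,1]$ under the hypotheses $0<\epsilon\le\alpha$ and $0<C\le\beta/\sqrt{\alpha}$: using both bounds,
\[
C_0 \;=\; \frac{C\sqrt{\epsilon}}{\beta} \;\le\; \frac{\beta/\sqrt{\alpha}}{\beta}\,\sqrt{\epsilon} \;=\; \sqrt{\epsilon/\alpha} \;\le\; 1,
\]
and clearly $C_0>0$. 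Hence Lemma~\ref{lem05281120} applies to $Y_{C_0}$, and Lemma~\ref{lem05281521}(3) transports the property \eqref{eq05281112} through $A$ to $Y(\epsilon,\beta,C)$.

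There is no real obstacle here; the only thing one must be careful about is that the rescaling is diagonal (so that the two pieces $|x|\le 1$ and $y\le 1+C_0\sqrt{|x|}$ transform correctly into the corresponding pieces for $Y(\epsilon,\beta,C)$), and that the square-root nonlinearity forces the particular exponent $1/2$ appearing in the bound $C\le \beta/\sqrt{\alpha}$. The computation above makes both explicit.
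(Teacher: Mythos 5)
Your proof is correct and follows essentially the same route as the paper: both rescale $Y_{C_0}$ by a diagonal linear map and invoke Lemma~\ref{lem05281521}(3) together with Lemma~\ref{lem05281120}. The only (cosmetic) difference is that you scale the horizontal axis by $\epsilon$ so that the image is exactly $Y(\epsilon,\beta,C)$, whereas the paper scales by $\alpha$ and then intersects with the strip $\{|x|\le\epsilon\}$; your variant avoids having to check that this intersection preserves \eqref{eq05281112}.
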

\begin{proof}
	Define the linear map $A(x,y):=(\alpha x,\beta y)$ and set $C':=C\frac{\sqrt\alpha}{\beta} \le 1$.
	Then one just needs to check that
	\[
	Y(\epsilon,\beta,C) = A(Y_{C'}) \cap \{(x,y):|x|\le \epsilon\} ,
	\]
	where $Y_{C'}$ is defined as in the previous Lemma~\ref{lem05281120}.
\end{proof}
\begin{proof}[Proof of Proposition~\ref{prop05281759}]
	First of all, let $\theta_0>0$ be such that for all $|\theta|\le\theta_0$ it holds
	\begin{equation}
	\frac{|\theta|}{2} \le |\cos(\frac\pi2+\theta)| = |\sin\theta| \le 2 |\theta| .
	\end{equation}
	Moreover, let $L,m,M,C>0$ be such that
	\[
	\frac{L\sqrt{2}}{\sqrt{m}} \le
	C
	\le \frac{m}{\sqrt{2M\theta_0}} .
	\]
	
	Let $f:\R\to(0,+\infty)$ be a function such that
	\begin{equation}\label{eq05282238}
	 	\forall s,t\in\R\quad |f(t)-f(s)| \le L \sqrt{|t-s|} ,
	\end{equation}
	\begin{equation}\label{eq05282239}
	 	\forall t\in\R\quad m\le f(t)\le M .
	\end{equation}
	
	We claim that, for $|\theta| \le \theta_0$, we have
	\begin{equation}\label{eq05281847}
	f\left(\frac\pi2+\theta\right)\cdot \left(\cos(\frac\pi2+\theta),\sin(\frac\pi2+\theta) \right)
	\in
	Y\left(2M\theta_0, f(\frac\pi2), C \right)
	\end{equation}
	where $Y\left(2M\theta_0, f(\frac\pi2), C \right)$ is defined as in Lemma~\ref{lem05281752}.
 	Indeed, we have on one side
	\[
	|x| :=
	|f(\frac\pi2+\theta) \cos(\frac\pi2+\theta)|
	\le M 2|\theta| \le 2M\theta_0.
	\]
	On the other side,
	\begin{align*}
	y &:=
	f(\frac\pi2+\theta) \sin(\frac\pi2+\theta) 
	\le f(\frac\pi2+\theta)  \\
	&\le f(\frac\pi2) + f(\frac\pi2+\theta) - f(\frac\pi2) 
	\le f(\frac\pi2) + L \sqrt{|\theta|}  \\
	&\le f(\frac\pi2) + L \frac{ \sqrt{2\cos(\frac\pi2+\theta)f(\frac\pi2+\theta) } }{\sqrt{f(\frac\pi2+\theta)}}
	\le f(\frac\pi2) + \frac{\sqrt2 L}{\sqrt{m}} \sqrt{|x|} \\
	&\le f(\frac\pi2) + C \sqrt{|x|} .
	\end{align*}
	So \eqref{eq05281847} is satisfied.
	
	Since for $\alpha :=2M\theta_0$ and $\beta:=f(\frac\pi2)$ we have
	\[
	\frac{\beta}{\sqrt{\alpha}} 
	= \frac{f(\frac\pi2)}{\sqrt{ 2M\theta_0}}
	\ge \frac{ m }{\sqrt{ 2M\theta_0}}
	\ge C,
	\]
	Lemma \ref{lem05281752} applies and we get that $Y\left(2M\theta_0, f(\frac\pi2), C \right)$ satisfies \eqref{eq05281112}.
	
	For any $\theta$ we set $A_\theta$ to be the counterclockwise rotation of angle $\theta$:
	\[
	A_\theta = 
	\begin{pmatrix}
	\cos\theta & -\sin\theta \\
	\sin\theta & \cos\theta
	\end{pmatrix} .
	\]
	Define the curve $\phi(t) := f(t) (\cos t,\sin t )$.
	Notice that $A_\theta\phi(t) = f((t-\theta)+\theta) (\cos (t+\theta),\sin (t+\theta) )$ and that the function $s\mapsto f(s+\theta)$ is still satisfying both \eqref{eq05282238} and \eqref{eq05282239}.
	So we have that,
	for $|t|,|s|< \frac{\theta_0}2$, 
	\[
	\phi(\frac\pi2+t) \in A_s[ Y(2M\theta_0 ,f(\frac\pi2+s),C) ] 
	\]
	and the set $A_s[ Y(2M\theta_0 ,f(\frac\pi2+s),C) ]$ satisfies \eqref{eq05281112}.
	
	Set
	\[
	B := \bigcap_{|s|< \frac{\theta_0}2 } 
	\left(A_s[ Y(2M\theta_0 ,f(\frac\pi2+s),C) ] \cap -A_s[ Y(2M\theta_0 ,f(\frac\pi2+s),C) ] \right) .
	\]
	The set $B$ satisfies all three conditions of Lemma~\ref{lem05282259}, hence it is the unit ball of a homogeneous metric.
	Moreover,
	\[
	\{\phi(\frac\pi2+t): |t|< \frac{\theta_0}2\}
	\subset \de B.
	\]
	
	The statement of Proposition~\ref{prop05281759} follows because there are functions $f:\R\to[0,+\infty)$ that satisfy \eqref{eq05282238} and \eqref{eq05282239} and such that the image of the curve $\phi$ has Hausdorff dimension $\frac32$.
	Indeed, the image of $\phi$ has the same Hausdorff dimension of the graph of $f$, and then one uses the sharpness of Lemma~\ref{lem04251012}.
\end{proof}

\begin{Rem}\label{rem09051840}
	Using the same arguments as in the proof of Lemma~\ref{lem05281120}, one easily shows that the set
	\[
	B := \{(x,y)\in\R^2:\ |x|\le 1,\ -f(-x) \le y \le f(x) \},
	\]
	where
	\[
	f(x) :=
	\begin{cases}
		1 & x\le 0 \\
	 	1+\sqrt{x} & x>0 ,
	\end{cases}
	\]
	is the unit ball of a homogeneous distance on $\R^2$ with dilations \eqref{eq04210906}.
	Notice that such $B$ is a Lipschitz domain, but the associated homogeneous distance is not Lipschitz in any neighborhood of the point $(0,1)$, thanks to Lemma~\ref{lem09051217}.
\end{Rem}

\subsection{The Heisenberg groups}
In the Heisenberg groups $\HH^n$ (for an introduction see \cite{MR2312336}) 
condition \eqref{eq04221835} holds at every non-zero point.
Therefore, balls of any homogeneous metric on $\HH^n$ are Lipschitz domains.
We will treat more in detail the first Heisenberg group in Section~\ref{sec05261301}.

\subsection{A sub-Finsler sphere with a cusp}
Let $\HH$ be the first Heisenberg group (see Section~\ref{sec05261301} for the definition).
The group $G=\HH\times\R$ is a stratified group with grading $(V_1\times\R)\oplus V_2$, where $V_1\oplus V_2$ is a stratification for $\HH$.
The line $\{0_\HH\}\times\R$ is a singular curve in $G$.
Moreover, it has been shown in \cite{MR3153949} that there exists a sub-Finsler distance on $G$ whose unit sphere $\bb S_d$ has a cusp in the intersection $\bb S_d\cap (\{0_\HH\}\times\R)$.
However, for sub-Riemannian metrics we still have balls that are Lipschitz domains, as the following Proposition~\ref{prop07122316} shows.
But let us first recall a simple fact:
\begin{Lem}\label{lem07122315}
	Let $\bb A$ and $\bb B$ be two stratified groups with stratifications $\bigoplus V_i$ and $\bigoplus W_i$, respectively.
	Endow $V_1$ and $W_1$ with a scalar product each and let $d_A,d_B$ be the corresponding homogeneous sub-Riemannian distances.
	
	Then $\bb A\times\bb B$ is a Carnot group with stratification $\bigoplus_i V_i\times W_i$ and metric 
	\[
	d((a,b),(a',b')) := \sqrt{ d_A(a,a')^2 + d_B(b,b')^2 },
	\]
	which is the sub-Riemannian metric generated by the scalar product on $V_1\times W_1$ 
	induced by the scalar products on $V_1$ and $W_1$. 
\end{Lem}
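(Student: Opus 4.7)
The plan is to handle the algebraic claim and the metric identity separately, the latter via matching upper and lower bounds.

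For the algebraic part, the Lie algebra of $\bb A\times\bb B$ is $\g_A\oplus\g_B$ with componentwise bracket. Setting $U_i := V_i\oplus W_i$ (viewed inside $\g_A\oplus\g_B$), one checks directly that $\bigoplus_i U_i=\g_A\oplus\g_B$ and that $[U_i,U_j]\subset U_{i+j}$ because the bracket splits. The subspace $U_1=V_1\oplus W_1$ generates the whole algebra because iterated brackets of elements of $V_1\oplus\{0\}$ recover $\g_A\oplus\{0\}$ (using the stratification of $\bb A$) and analogously for $\{0\}\oplus\g_B$. Hence $\bb A\times\bb B$ is stratified with stratification $\bigoplus_i V_i\times W_i$, and endowing $V_1\times W_1$ with the orthogonal-sum scalar product turns it into a Carnot group.

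For the metric identity, I identify $T_{(a,b)}(\bb A\times\bb B)\cong T_a\bb A\oplus T_b\bb B$ and observe that under this identification the horizontal bundle of the product is the direct sum of the two horizontal bundles, while the sub-Riemannian norm at each tangent space is $\sqrt{|\cdot|_A^2+|\cdot|_B^2}$. Consequently, a curve $\gamma=(\alpha,\beta)$ is horizontal iff $\alpha$ and $\beta$ are, with length $\int_0^1\sqrt{|\alpha'|_A^2+|\beta'|_B^2}\dd t$. The upper bound $d((a,b),(a',b'))\le\sqrt{d_A(a,a')^2+d_B(b,b')^2}$ is obtained by taking constant-speed geodesics $\alpha$ from $a$ to $a'$ in $\bb A$ and $\beta$ from $b$ to $b'$ in $\bb B$: then $(\alpha,\beta)$ is horizontal with constant speed $\sqrt{d_A(a,a')^2+d_B(b,b')^2}$ and joins the two points in unit time.

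For the lower bound, given any horizontal join $(\alpha,\beta)$ in the product, I apply the triangle inequality for $\R^2$-valued Bochner integrals to the function $t\mapsto(|\alpha'(t)|_A,|\beta'(t)|_B)$:
\[
\int_0^1\sqrt{|\alpha'|_A^2+|\beta'|_B^2}\dd t \;\ge\; \sqrt{\left(\int_0^1|\alpha'|_A\dd t\right)^{2}+\left(\int_0^1|\beta'|_B\dd t\right)^{2}} \;\ge\;\sqrt{d_A(a,a')^2+d_B(b,b')^2},
\]
where the last step uses that the length of $\alpha$ in $\bb A$ is at least $d_A(a,a')$ and analogously for $\beta$. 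Taking the infimum over horizontal joins gives the matching reverse inequality. No step looks particularly delicate; the only thing to verify carefully is the splitting of horizontal curves in the product as pairs of horizontal curves in the factors, which is immediate from the direct-sum structure of the horizontal bundles under the canonical identification of tangent spaces.
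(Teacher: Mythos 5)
Your proof is correct, but it takes a different route from the paper's. The paper disposes of the metric identity in one line by passing to the \emph{energy} functional $E(\gamma)=\int_0^1\|\gamma'(t)\|^2\dd t$: since $d(p,q)^2$ equals the minimal energy of horizontal curves on $[0,1]$ joining $p$ to $q$ (Cauchy--Schwarz, with equality for constant-speed parametrizations), and the energy of a product curve $(\alpha,\beta)$ is $E_A(\alpha)+E_B(\beta)$ with the two components constrained independently, the minimum of the sum is the sum of the minima and $d^2=d_A^2+d_B^2$ falls out with both inequalities at once. You instead work with the length functional directly, getting the upper bound from a pair of constant-speed geodesics and the lower bound from Minkowski's integral inequality for the $\R^2$-valued map $t\mapsto(|\alpha'(t)|_A,|\beta'(t)|_B)$ followed by monotonicity of $(s,t)\mapsto\sqrt{s^2+t^2}$. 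Both arguments are sound and rest on the same identification of the product's horizontal bundle and norm, which you correctly verify; the energy argument is slightly slicker because the additive splitting does all the work, while yours is more elementary in that it never invokes the energy--distance equivalence, at the cost of importing the integral triangle inequality. Your treatment of the algebraic part (that $\bigoplus_i V_i\times W_i$ is a stratification and $V_1\times W_1$ generates) is correct and is more than the paper records, since the paper omits it as routine.
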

One proves this lemma by using the fact that the energy of curves on $\bb A\times\bb B$ (i.e., the integral of the squared norm of the derivative) is the sum of the energies of the two components of the curve.
\begin{Prop}\label{prop07122316}
	Any homogeneous sub-Riemannian metric on $\HH\times\R$ is locally Lipschitz away from the diagonal.
\end{Prop}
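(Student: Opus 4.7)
The plan is to reduce the assertion to the standard product sub-Riemannian metric on $\HH\times\R$ by means of a grading-preserving Lie group automorphism, and then to combine Lemma~\ref{lem07122315} with Proposition~\ref{prop06031443} applied to $\HH$. Write the Lie algebra as $V_1\oplus V_2$ with $V_1=\Span\{X,Y,T\}$, $V_2=\Span\{Z\}$, the only nontrivial bracket being $[X,Y]=Z$. The first step is to observe that any grading-preserving Lie algebra automorphism $\phi$ of $\g$ sends the line $\R T$ to itself: expanding $\phi(X)=a_1X+a_2Y+a_3T$, $\phi(Y)=b_1X+b_2Y+b_3T$ and $\phi(T)=c_1X+c_2Y+c_3T$, the relations $[\phi(X),\phi(T)]=[\phi(Y),\phi(T)]=0$ give $a_1c_2-a_2c_1=0$ and $b_1c_2-b_2c_1=0$, while $[\phi(X),\phi(Y)]=\phi(Z)\neq0$ forces $a_1b_2-a_2b_1\neq0$, and together these imply $c_1=c_2=0$.

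Given an arbitrary scalar product $g$ on $V_1$ defining a homogeneous sub-Riemannian metric, I would use this rigidity together with the remaining freedom to put $g$ in normal form. Since $g(T,T)>0$, the $g$-orthogonal complement $T^{\perp_g}$ is a $2$-plane transverse to $\R T$ inside $V_1$, so its projection onto $\Span\{X,Y\}$ along $\R T$ is an isomorphism. Selecting a $g$-orthonormal basis $e_1,e_2$ of $T^{\perp_g}$ and setting $\phi(X)=e_1$, $\phi(Y)=e_2$, $\phi(T)=T/\sqrt{g(T,T)}$ defines a Lie algebra automorphism of $\g$ which integrates to a Lie group automorphism of the simply connected group $\HH\times\R$, and by construction $\phi^*g$ is the standard product scalar product $g_0$. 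Consequently $(\HH\times\R,d_g)$ is isometric through $\phi$ to $(\HH\times\R,d_{g_0})$, so it suffices to prove the proposition for the product metric $g_0$.

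For $g_0$, Lemma~\ref{lem07122315} yields $d_{g_0}((a,b),(a',b'))^2 = d_\HH(a,a')^2 + (b-b')^2$, where $d_\HH$ is the standard Carnot-Carathéodory distance on the first Heisenberg group. The same abnormal-extremal calculation that in $\HH\times\R$ confines singular horizontal curves to $\{0_\HH\}\times\R$ shows, applied to $\HH$ alone, that $\HH$ admits no non-constant singular horizontal curves; hence Proposition~\ref{prop06031443} guarantees that $p\mapsto d_\HH(0,p)^2$ is locally Lipschitz on $\HH$, and by left-invariance $(a,a')\mapsto d_\HH(a,a')^2$ is locally Lipschitz on $\HH\times\HH$. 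It follows that $d_{g_0}^2$ is locally Lipschitz on $(\HH\times\R)\times(\HH\times\R)$. Away from the diagonal $d_{g_0}$ admits a positive lower bound on compact sets, so $d_{g_0}=\sqrt{d_{g_0}^2}$ is locally Lipschitz there as well. The main obstacle is the normal-form reduction: once the rigidity $\phi(T)\in\R T$ is established and exploited to bring any scalar product on $V_1$ into the standard product form, the rest reduces mechanically to the Heisenberg case already handled earlier in the paper.
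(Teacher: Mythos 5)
Your proposal is correct and follows essentially the same route as the paper: reduce an arbitrary homogeneous sub-Riemannian metric on $\HH\times\R$ to the standard product metric via a graded Lie group automorphism, invoke Lemma~\ref{lem07122315} to write $d((0,0),(p,t))=\sqrt{d_\HH(0,p)^2+t^2}$, and conclude from Proposition~\ref{prop06031443} applied to $\HH$. The only differences are expository: you justify the normalization $\phi(T)\in\R T$ by an explicit bracket computation where the paper simply chooses orthonormal bases with $T_i\in\{0\}\times\R$, and you spell out the passage from the local Lipschitzness of $d_\HH(0,\cdot)^2$ to that of the square root, which the paper leaves as ``follows directly.''
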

\begin{proof}
	First of all, we show that, up to isometry, there is only one homogeneous sub-Riemannian distance on $\HH\times\R$.
	Let $(X_1,Y_1,T_1)$ and $(X_2,Y_2,T_2)$ be two basis of $V_1\times\R$ that are orthonormal for two sub-Riemannian structures, respectively.
	We may assume $T_1, T_2\in\{0\}\times\R$.
	Notice that $[X_i,Y_i]\notin V_1\times\R$.
	The linear map such that $X_1\mapsto X_2$, $Y_1\mapsto Y_2$, $T_1\mapsto T_2$, $[X_1,Y_1]\mapsto[X_2,Y_2]$ is an automorphism of Lie algebras and induces an isometry between the two sub-Riemannian structures.
	
	Denoting by $d_\HH$ and $d_\R$ the standard metrics on $\HH$ and $\R$, respectively, we prove the proposition for the product metric as in Lemma~\ref{lem07122315}.
	Namely, we need to check that %
%
%
%
	the function
	\begin{equation}\label{eq05261620}
	(p,t)\mapsto d((0,0),(p,t)) = \sqrt{d_\HH(0,p)^2 + t^2}
	\end{equation}
	is locally Lipschitz at all $(\hat p,\hat t)\neq(0,0)$.
	This follows directly from Proposition~\ref{prop06031443}.
\end{proof}

\subsection{A sub-Riemannian sphere with a cusp}
\begin{Prop}
 	Let $G$ be a Carnot group of step 3 endowed with a sub-Riemannian distance $d_G$.
	Then the sub-Riemannian distance $d$ on $G\times\R$ given by
	\[
	d((p,s),(q,t)) = \sqrt{ d_E(p,q)^2 + |t-s|^2 }
	\]
	has a unit sphere with a cusp at $(0_G,1)$.
\end{Prop}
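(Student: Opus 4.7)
The plan is to exhibit an explicit symmetric one-parameter family on the unit sphere that converges to $(0_G,1)$ so flatly in the $G$-direction that the sphere cannot be realized as a Lipschitz graph over $G$ near that point; by Lemma~\ref{lem09051217} (applied to the ambient Carnot group $G\times\R$ as in Lemma~\ref{lem07122315}), this is exactly what it means for the sphere to have a cusp there.

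The main ingredient is the step-$3$ hypothesis, which guarantees that the top layer $V_3$ of the stratification of $\g$ is nontrivial. I would pick $Z\in V_3\setminus\{0\}$ and set $c:=d_G(0_G,\exp(Z))>0$. Because $\delta_\lambda$ acts on $V_3$ by multiplication by $\lambda^3$, and because $d_G$ is $1$-homogeneous under $\delta_\lambda$, one gets $d_G(0_G,\exp(\epsilon Z))=c\,\epsilon^{1/3}$ for $\epsilon\ge 0$. The function $\epsilon\mapsto d_G(0_G,\exp(\epsilon Z))$ is even, thanks to the symmetry $d_G(0,g)=d_G(0,g^{-1})$ combined with $\exp(\epsilon Z)^{-1}=\exp(-\epsilon Z)$, so we end up with the key identity
\[
d_G\bigl(0_G,\exp(\epsilon Z)\bigr)=c\,|\epsilon|^{1/3}\qquad\forall\epsilon\in\R.
\]

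Next, for $|\epsilon|<c^{-3}$ I would consider the curve
\[
\gamma(\epsilon):=\Bigl(\exp(\epsilon Z),\,\sqrt{1-c^2|\epsilon|^{2/3}}\,\Bigr)\in G\times\R,
\]
which by construction satisfies $d\bigl((0_G,0),\gamma(\epsilon)\bigr)=1$ and $\gamma(\epsilon)\to(0_G,1)$ as $\epsilon\to 0$. In any smooth coordinate system on $G\times\R$ near $(0_G,1)$ (for example exponential coordinates), the $G$-component of $\gamma(\epsilon)$ sits at Riemannian distance of order $|\epsilon|$ from $0_G$, whereas the vertical defect satisfies $1-\sqrt{1-c^2|\epsilon|^{2/3}}\sim\tfrac{c^2}{2}|\epsilon|^{2/3}$. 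Hence the vertical-to-horizontal slope along $\gamma$ diverges like $|\epsilon|^{-1/3}$ as $\epsilon\to 0^\pm$, and both branches $\epsilon>0$ and $\epsilon<0$ enter $(0_G,1)$ tangent to the $\R$-axis from below. This is precisely the cuspidal behavior of the sphere asserted in the statement.

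I do not expect any serious obstacle. The only computation worth checking is the cubic-root scaling $d_G(0_G,\exp(\epsilon Z))=c|\epsilon|^{1/3}$, which is immediate from the definition of the dilations relative to the grading together with the symmetry of $d_G$; the rest is algebra in one real variable.
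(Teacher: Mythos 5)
Your proposal is correct and follows essentially the same route as the paper: pick $Z\in V_3\setminus\{0\}$, use homogeneity to get $d_G(0_G,\exp(\epsilon Z))=c|\epsilon|^{1/3}$, and observe that the resulting trace of the unit sphere in the plane $\Span\{Z\}\times\R$ approaches $(0_G,1)$ tangentially to the vertical axis from both sides. The paper writes the same set as the locus $|z|=\bigl((1+t)/C^2\bigr)^{3/2}(1-t)^{3/2}$ rather than as your parametrized curve $\gamma(\epsilon)$, but the computation and the conclusion are identical.
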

\begin{proof}
	Let $\g=\bigoplus_{i=1}^3V_i$ be the Lie algebra of $G$ and fix $Z\in V_3\setminus\{0\}$.
	We identify $\g$ with $G$ through the exponential map.
	
	The intersection of the unit sphere in $(G\times\R,d)$ with the plane $\Span\{Z\}\times\R$ is given by all points $(zZ,t)$ such that
	\begin{equation}\label{eq05271758}
	d_G(0,zZ)^2 + t^2 = 1 .
	\end{equation}
	Since $d_G$ is homogeneous on $G$, there exists $C>0$ such that for all $z\in\R$
	\begin{equation}\label{eq05271759}
	d_G(0,zZ) = C |z|^{\frac13} .
	\end{equation}
	Putting together \eqref{eq05271758} and \eqref{eq05271759} we obtain that this intersection consists of all the points $(zZ,t)$ such that
	\[
	|z| = \left( \frac{1+t}{C^2} \right)^{\frac 32} \cdot (1-t)^{\frac 32} .
	\]
	One then easily sees that this set in $\R^2$ has a cusp at $(0,1)$.
\end{proof}

\section{A closer look at the Heisenberg group}\label{sec05261301}
The Heisenberg group $\HH$ is the easiest example of a stratified group that is not Abelian and for this reason it has been studied in large extend.
The most common homogeneous metrics on $\HH$ are the Koràny metric and the sub-Riemannian metric.
Sub-Finsler metrics on $\HH$ arise in the study of finitely-generated groups, see \cite{MR3267520} and references therein.
The geometry of spheres has been studied in \cite{2014arXiv1406.1484L} and \cite{MR3254527}.\\

The Lie algebra $\h$ of the Heisenberg group is a three dimensional vector space $\Span\{X,Y,Z\}$ with a Lie bracket operation defined by the only nontrivial relation $[X,Y] = Z$.

We identify the Heisenberg group $\HH$ again with $\Span\{X,Y,Z\}$, where we define the group operation
\[
p \cdot q := p+q+\frac12[p,q]
\qquad\forall p,q\in\HH .
\]
Hence $\h$ is the Lie algebra of $\HH$ and the exponential map $\h\to\HH$ is the identity map.
Notice that the inverse of an element $p$ is $p^{-1}=-p$.

The Heisenberg algebra admits the stratification $\h = V_1\oplus V_2$ with $V_1=\Span\{X,Y\}$ and $V_2=\Span\{Z\}$.
Denote by $\pi$ the linear projection $\h\to V_1$ along $V_2$.
Notice that this map, regarded as $\pi:(\HH,\cdot)\to (V_1,+)$, is a group morphism.

The dilations $\delta_\lambda:\HH\to\HH$ are explicitly expressed by
\begin{equation*}
\delta_\lambda(xX+yY+zZ) = x\lambda X+y\lambda Y+z\lambda^2 Z ,
\qquad\forall \lambda>0 .
\end{equation*}
These are both Lie algebra automorphisms $\delta_\lambda:\h\to\h$ and Lie group automorphisms $\HH\to\HH$.

Three are the main results of this section.

\begin{Prop}\label{prop05291014}
 	Let $N:\HH\to[0,+\infty)$ be a homogeneous norm.
	Then the unit ball
	\[
	B := \{p\in\HH:N(p)\le 1\}
	\]
	is a star-like Lipschitz domain.
\end{Prop}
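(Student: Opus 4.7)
The plan is to prove the two assertions---star-likeness with respect to $0$ and Lipschitz regularity of $\partial B$---independently.

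For the star-like part, since $B=B^{-1}$ and $N$ is homogeneous, it suffices to show $rp\in B$ whenever $p\in B$ and $r\in[0,1]$. The crux is the explicit identity
\[
rp \;=\; \delta_{(1+r)/2}(p)\cdot\delta_{(1-r)/2}(p^{-1}),
\]
which I would verify by a short computation in exponential coordinates. Writing $p=p_1+p_2\in V_1\oplus V_2$ and $p^{-1}=-p_1-p_2$, the $V_1$-parts of the right-hand side add to $\bigl(\tfrac{1+r}{2}-\tfrac{1-r}{2}\bigr)p_1=rp_1$, the $V_2$-parts add to $\tfrac{(1+r)^2-(1-r)^2}{4}p_2=rp_2$, and the Campbell--Baker--Hausdorff correction vanishes because
\[
\tfrac12[\delta_{(1+r)/2}(p),\delta_{(1-r)/2}(p^{-1})]=-\tfrac{(1+r)(1-r)}{8}[p_1,p_1]=0.
\]
Since $p,p^{-1}\in B$ and $\tfrac{1+r}{2}\in[0,1]$, Lemma~\ref{lem05282259} applied to the pair $(p,p^{-1})$ with parameter $t=\tfrac{1+r}{2}$ immediately yields $rp\in B$.

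For the Lipschitz regularity of $\partial B=\bb S_d$, I would invoke Theorem~\ref{teo04221835}, so the only work is to verify that condition \eqref{eq04221835} holds at every $p\in\HH\setminus\{0\}$. A direct computation using $[X,Y]=Z$ gives, for $p=(x,y,z)$,
\[
\dd L_p(V_1)=\Span\{X-\tfrac{y}{2}Z,\;Y+\tfrac{x}{2}Z\},\ \ \dd R_p(V_1)=\Span\{X+\tfrac{y}{2}Z,\;Y-\tfrac{x}{2}Z\},\ \ \bar\delta(p)=xX+yY+2zZ,
\]
so $\dd L_p(V_1)+\dd R_p(V_1)=V_1\oplus\Span\{xZ,yZ\}$; this already equals $T_p\HH$ whenever $(x,y)\ne(0,0)$, and in the remaining case $(x,y)=(0,0)$ with $z\ne 0$ the vector $\bar\delta(p)=2zZ$ supplies the missing $V_2$-direction. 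Hence \eqref{eq04221835} holds at every point of $\bb S_d$, and Theorem~\ref{teo04221835} provides the local Lipschitz-graph representation of the boundary, which is precisely the definition of a Lipschitz domain.

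The main obstacle is the star-like part: the naive estimate coming from the asymmetric decomposition $rp=\delta_r(p)\cdot(0,0,r(1-r)z)$ gives, via the triangle inequality, only $N(rp)\le rN(p)+\sqrt{r(1-r)|z|}\,N(Z)$, which fails to be $\le N(p)$ as $r\to 1$. The idea that salvages the argument is to split $rp$ \emph{symmetrically} between $p$ and its inverse; bilinearity of the Lie bracket then forces the relevant BCH term to be a multiple of $[p_1,p_1]=0$, so no correction appears and the product reproduces $rp$ exactly.
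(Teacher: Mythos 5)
Your proposal is correct and follows essentially the same route as the paper: the star-likeness is obtained from the identity $\delta_t(p)\delta_{1-t}(p^{-1})=(2t-1)p$ (your parametrization $t=\tfrac{1+r}{2}$ is just a relabeling) combined with Lemma~\ref{lem05282259}, and the Lipschitz regularity comes from verifying condition \eqref{eq04221835} at every nonzero point and invoking Theorem~\ref{teo04221835}. The only difference is that you write out explicitly the computation of $\dd L_p(V_1)+\dd R_p(V_1)+\Span\{\bar\delta(p)\}$, which the paper leaves as ``one easily shows''; your computation is correct.
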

\begin{proof}
	One easily shows that condition \eqref{eq04221835} holds for all $p\in\HH\setminus\{0\}$.
	In order to prove that $B$ is star-like, one first notice that if $p\in B$, then $-p\in B$, hence 
	$
	\delta_t(p)\delta_{1-t}(-p) = (2t-1) p
	\in B
	$ 
	for all $t\in[0,1]$, and this is a straight line passing through zero.
\end{proof}
\begin{Prop}\label{prop05291027}
 	Let $N$ and $B$ as in Proposition~\ref{prop05291014}.
	Set $K:=\pi(B)\subset V_1$.
	Then $K$ is a compact, convex set with $K=-K$ and $K=cl(\Int(K))$, and there 
	exists a function $f:K\to[0,+\infty)$, locally Lipschitz on $\Int(K)$, such that
	\begin{equation}\label{eq08291052}
	B = \{v+zZ:\ v\in K,\ -f(-v)\le z\le f(v) \} .
	\end{equation}
\end{Prop}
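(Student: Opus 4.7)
The plan is to proceed in three stages, establishing first the structure of $K$, then the representation of $B$ by $f$, and finally the local Lipschitz regularity of $f$.

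I would first handle the structural properties of $K=\pi(B)$. Compactness follows from compactness of $B$ and continuity of $\pi$, and the symmetry $K=-K$ follows from $B=-B$ combined with linearity of $\pi$. Since $0\in\Int(B)$ and $\pi$ is a surjective linear map (hence open), $0\in\Int(K)$. For convexity, the key observation is that $\pi:(\HH,\cdot)\to (V_1,+)$ is a group homomorphism (because $[\HH,\HH]\subset V_2=\ker\pi$). Combining this with Lemma~\ref{lem05282259}, for $p,q\in B$ and $t\in[0,1]$ we get
\[
\pi(\delta_t(p)\cdot\delta_{1-t}(q))=t\pi(p)+(1-t)\pi(q)\in K,
\]
so $K$ is convex. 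Convexity plus $0\in\Int(K)$ then gives $K=\mathrm{cl}(\Int(K))$ by standard convex analysis.

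Next, for each $v\in K$ I would set $D(v):=\{z\in\R:v+zZ\in B\}$, a nonempty compact subset of $\R$. Using the Heisenberg product $p\cdot q=p+q+\tfrac12[p,q]$ together with $[v,v]=0$ and centrality of $Z$, for $z,z'\in D(v)$ and $t\in[0,1]$ we compute
\[
\delta_t(v+zZ)\cdot\delta_{1-t}(v+z'Z)=v+\bigl(t^2z+(1-t)^2z'\bigr)Z,
\]
which lies in $B$ by Lemma~\ref{lem05282259}. As $t$ runs through $[0,1]$ the $Z$-coefficient continuously sweeps from $z'$ to $z$, so $D(v)$ is an interval. Setting $z'=z$ gives $(t^2+(1-t)^2)z\in D(v)$ for all $t$, hence $z/2^n\in D(v)$, and closedness forces $0\in D(v)$. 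I then define $f(v):=\max D(v)\ge 0$; by $B=-B$ we have $z\in D(v)\iff -z\in D(-v)$, so $\min D(v)=-f(-v)$. Thus $D(v)=[-f(-v),f(v)]$, which yields the representation \eqref{eq08291052}. In particular $v\in B$ for every $v\in K$, so $K=\{v\in V_1:N(v)\le 1\}$ and $\Int(K)=\{v\in V_1:N(v)<1\}$.

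For local Lipschitz regularity of $f$ on $\Int(K)$, the starting point is that condition~\eqref{eq04221835} holds at every nonzero point of $\HH$ (as noted in the Heisenberg subsection), so Theorem~\ref{teo04221835} applies everywhere on $\bb S_d$. Fix $v_0\in\Int(K)$. Since $N(v_0)<1$, continuity of $N$ gives $N(v_0+\epsilon Z)<1$ for small $\epsilon>0$, whence $f(v_0)\ge\epsilon>0$. Therefore $p_0:=v_0+f(v_0)Z$ is a nonzero point of $\bb S_d$, and $\bar\delta(p_0)=v_0+2f(v_0)Z$ has strictly positive $Z$-component. By Theorem~\ref{teo04221835}, $\bb S_d$ is a Lipschitz graph near $p_0$. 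Moreover, the upper and lower sheets $v\mapsto f(v)Z$ and $v\mapsto -f(-v)Z$ differ by the continuous quantity $f(v)+f(-v)$, which is strictly positive at $v_0$; by upper semicontinuity of $f$ and $f(-\cdot)$, it remains bounded below by a positive constant on a neighborhood of $v_0$. Hence on a small enough neighborhood $U$ of $p_0$, the upper sheet coincides with $\bb S_d\cap U$, so it inherits the Lipschitz graph structure from $\bb S_d$.

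The main obstacle is to convert the Lipschitz graph in the direction $\bar\delta(p_0)$ (which is what the proof of Theorem~\ref{teo04221835} directly provides) into a Lipschitz graph in the $Z$-direction, which is the statement that $f$ is Lipschitz. To handle this I would revisit the cone construction in the proof of Theorem~\ref{teo04221835}: for $q\in\bb S_d$ near $p_0$, the closed ball contains a cone $q+C_q$ of fixed opening and height with axis $\bar\delta(q)$. Since $\bar\delta(q)=\pi(q)+2f(\pi(q))Z$ has $Z$-component bounded below by a positive constant uniformly for $q$ close to $p_0$ (because $f(\pi(q))$ is bounded away from $0$), one can extract from $C_q$ a sub-cone of positive opening with axis $-Z$; the analogous outside cone with axis $+Z$ comes from applying the same construction at $-q\in\bb S_d$ and using $B=-B$. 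These uniform cone conditions on both sides of $\bb S_d\cap U$ give precisely that the upper sheet is a Lipschitz graph over $V_1$, i.e.\ that $f$ is locally Lipschitz at $v_0$.
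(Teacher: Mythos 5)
Your treatment of the structure of $K$ and of the representation \eqref{eq08291052} is correct and essentially coincides with the paper's (your computation $\delta_t(v+zZ)\cdot\delta_{1-t}(v+z'Z)=v+(t^2z+(1-t)^2z')Z$ is the content of Lemma~\ref{lem1630}). The gap is in the last step, which is the heart of the proposition. You rightly observe that Theorem~\ref{teo04221835} only yields a Lipschitz graph in the direction $\bar\delta(p_0)$ and that this must be converted into a Lipschitz graph in the direction $Z$, but the proposed conversion does not work. The cone $C_q\subset\bar B_d(0,1)$ coming from the proof of Theorem~\ref{teo04221835} has axis $-\bar\delta(q)=-\pi(q)-2f(\pi(q))Z$ and a fixed but uncontrolled, possibly very small, opening $\alpha$. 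Extracting a sub-cone of positive opening with axis $-Z$ requires the angle between $-\bar\delta(q)$ and $-Z$, namely $\arctan\bigl(|\pi(q)|/(2f(\pi(q)))\bigr)$, to be smaller than $\alpha$; knowing only that the $Z$-component is bounded below does not give this, since $|\pi(q)|$ can be of the order of $\diam(K)$. The failure is not merely one of justification: the directions reachable by $\phi_q(\Delta)$ at $q$ are $\sum a_i\,\dd R_q(v_i)+\sigma\bar\delta(q)+\sum b_i\,\dd L_q(v_i)$ with $\sigma\le-\sum_i(|a_i|+|b_i|)$, and making the horizontal component vanish forces $\sum_i|a_i+b_i|\ge|\sigma|\,|\pi(q)|/\max_i|v_i|$, which is incompatible with the budget $|\sigma|$ as soon as $|\pi(q)|>\max_i|v_i|$; so $-Z$ need not lie in the reachable cone at all. (In general a set can be a Lipschitz graph in one direction and a set-theoretic graph in a second direction without being a Lipschitz graph in the second direction; think of $x=y^3$ in the plane.)

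The paper closes exactly this gap with a separate ``tent'' construction that genuinely uses $\pi(p)\in\Int(K)$: for a small circle $\pi(p)+v_\theta\subset\Int(K)\subset B$ one considers $\phi(t,\theta)=\delta_{1-t}p\cdot\delta_t(\pi(p)+v_\theta)\in B$, whose horizontal displacement is $tv_\theta$ (covering \emph{all} horizontal directions) while its vertical coordinate leaves $p$ with slope $\tfrac12(p_1y_\theta-p_2x_\theta)-2p_3$, bounded uniformly in $\theta$ and locally in $p$; together with the vertical-segment property of Lemma~\ref{lem1630} this produces a genuine vertical cone $q+C\subset B$ at every boundary point $q$ near $p$, which is the Lipschitz estimate for $f$. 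Some argument of this kind is unavoidable even if you keep your route through Theorem~\ref{teo04221835}. A minor additional slip: your lower bound $f(v)+f(-v)\ge c>0$ near $v_0$ needs a lower-semicontinuity-type argument (e.g.\ $N(v+\epsilon Z)<1$ for all $v$ near $v_0$), not upper semicontinuity of $f$.
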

The proof is postponed to Section~\ref{subs07131005}.

We remark that homogeneous distances and sub-Finsler homogeneous distances on $\HH$ have a precise relation.
Indeed, if $d$ is a homogeneous distance on $\HH$, then it is easy to show that the length distance generated by $d$ is exactly the sub-Finsler distance that has the norm on $V_1$ generated by the set $K$ defined in Proposition~\ref{prop05291027}.
\begin{Prop}\label{prop05291028}
	Let $K\subset V_1$ be a compact, convex set with $-K=K$ and $0\in\Int(K)$.
	Let $g:K\to\R$ be Lipschitz.
	Then there exists $b\in\R$ such that for $f:=g+b$ the set $B$ as in \eqref{eq08291052}
	is the unit ball of a homogeneous norm.
\end{Prop}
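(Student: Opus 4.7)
The plan is to apply Lemma~\ref{lem05282259} to the set $B$ defined by~\eqref{eq08291052}, with $f := g+b$ and $b$ to be chosen. Compactness of $B$ follows from compactness of $K$ and continuity of $f$; the identity $B = B^{-1}$ follows from $K = -K$ together with the fact that in the Heisenberg group $(v+zZ)^{-1} = -v-zZ$; and $0 \in \Int(B)$ follows from $0 \in \Int(K)$ combined with $f(0)>0$, which holds as soon as $b > -g(0)$. These requirements pose no difficulty.

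The content of the proposition is therefore the convexity-like condition~\eqref{eq05291157}. For $p_i = v_i + z_i Z \in B$, writing $[v_1,v_2] = \omega(v_1,v_2) Z$ and using that $V_2$ is central, a direct computation gives
\[
\delta_t(p_1)\cdot\delta_{1-t}(p_2) = v + \Bigl( t^2 z_1 + (1-t)^2 z_2 + \tfrac{1}{2} t(1-t)\,\omega(v_1,v_2) \Bigr)Z,
\]
where $v := tv_1 + (1-t)v_2 \in K$ by convexity. Condition~\eqref{eq05291157} then reduces to showing that the $Z$-coefficient above lies in $[-f(-v), f(v)]$ for every admissible choice of $z_1,z_2$.

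For the upper bound, substitute the worst case $z_i = f(v_i)$ and use the Lipschitz inequality $f(v_i) \le f(v) + L\,|v_i - v|$, where $L$ is the Lipschitz constant of $g$ (hence of $f$). Since $t^2 + (1-t)^2 = 1 - 2t(1-t)$, after dividing by $t(1-t) > 0$ the required inequality becomes
\[
f(v) \ge \tfrac{L}{2}\,|v_1 - v_2| + \tfrac{1}{4}\,\omega(v_1,v_2).
\]
The symmetric argument for the lower bound yields $f(-v) \ge \tfrac{L}{2}\,|v_1-v_2| - \tfrac{1}{4}\,\omega(v_1,v_2)$. Both are implied by
\[
\inf_{K} f \;\ge\; \tfrac{L}{2}\,\diam(K) + \tfrac{1}{4} \sup_{v_1,v_2\in K} |\omega(v_1,v_2)|,
\]
and the right-hand side is a finite constant depending only on $K$ and $L$. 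Choosing $b$ large enough so that $\inf_K g + b$ exceeds this quantity (and $b > -g(0)$ as well) finishes the verification, as the remaining degenerate cases $t\in\{0,1\}$ are trivial.

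The main obstacle is the appearance of the Heisenberg twist $\omega(v_1,v_2)$, which has no counterpart in the abelian setting and which naively couples $v_1$ and $v_2$ in a way that would not be controlled by Lipschitz regularity of $f$ alone. The observation that unlocks the proof is that on the compact set $K$ this twist is uniformly bounded, so a single additive constant $b$ absorbs it simultaneously for all $v_1,v_2\in K$ and all $t\in[0,1]$.
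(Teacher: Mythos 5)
Your proof is correct and follows essentially the same route as the paper: reduce condition \eqref{eq05291157} to a scalar inequality on the $Z$-coefficient of $\delta_t(p_1)\cdot\delta_{1-t}(p_2)$ (the paper's Lemma~\ref{lem08291227}) and then absorb both the Lipschitz defect and the uniformly bounded twist $\omega$ with a single large additive constant (Lemmas~\ref{lem08291235} and~\ref{lem08291236}). Your symmetric Lipschitz estimate at the convex combination $v=tv_1+(1-t)v_2$ is in fact slightly cleaner than the paper's, yielding the threshold $\tfrac{L}{2}\diam(K)+\tfrac14\sup_{v_1,v_2\in K}|\omega(v_1,v_2)|$ without the $\sup_K|g|$ term or the case split at $t=\tfrac12$.
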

The proof will appear in Section~\ref{subs07131006}.

As a consequence of Proposition~\ref{prop05291028}, we get the existence of homogeneous distances on $\HH$ that are not almost convex in the sense of \cite{2014arXiv1411.4201D}.
Indeed, one can take the distance associated to $g(xX+yY)=|x|$ from Proposition~\ref{prop05291028}.
\label{page09081203}

\subsection{Proof of Proposition~\ref{prop05291027}}\label{subs07131005}

\begin{Lem}\label{lem1630}
	Let $B\subset\HH$ be an arbitrary closed set satisfying \eqref{eq05291157}.	
 	If $p = v+zZ\in B$ with $v=\pi(p)\in V_1$, then $v+szZ\in B$, for all $s\in[0,1]$.
	In particular,
	\begin{enumerate}
	\item 	$\pi(B) = B\cap V_1$;
	\item 	$\pi(B)\subset V_1$ is convex.
	\end{enumerate}
\end{Lem}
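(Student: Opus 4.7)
The plan is to first establish the main claim—that if $p = v + zZ \in B$ with $v = \pi(p) \in V_1$, then $v + szZ \in B$ for every $s \in [0,1]$—and then to read off the two listed consequences as short corollaries.

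The crux will be a direct computation exploiting the step-$2$ nilpotency of $\h$: since $[V_1,V_2]=0$, $[V_2,V_2]=0$, and $[v,v]=0$ for every $v\in V_1$, all Lie brackets between $\delta_t(p)$ and $\delta_{1-t}(p)$ vanish. Expanding via $p\cdot q = p+q+\tfrac12[p,q]$, I expect
\[
\delta_t(p)\cdot\delta_{1-t}(p)
= \bigl(tv+t^2 zZ\bigr)\cdot\bigl((1-t)v+(1-t)^2 zZ\bigr)
= v+\bigl(t^2+(1-t)^2\bigr)zZ .
\]
Condition \eqref{eq05291157} then forces this element to lie in $B$ for every $t\in[0,1]$, and since $t\mapsto t^2+(1-t)^2$ sweeps out $[\tfrac12,1]$ as $t$ varies in $[0,1]$, this already gives the claim for $s\in[\tfrac12,1]$.

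To reach smaller $s$, I would iterate. The same vanishing of brackets shows that whenever $v+s_1 zZ,\,v+s_2 zZ\in B$, also $v+(t^2 s_1+(1-t)^2 s_2)zZ\in B$ for every $t\in[0,1]$. Taking $s_1=s_2=2^{-n}$, I get by induction on $n$ that the set of admissible $s$ contains $[2^{-(n+1)},1]$, hence its union over $n$ covers $(0,1]$. Closedness of $B$ then forces $v=v+0\cdot zZ\in B$, completing the main claim.

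The two consequences then follow quickly. For (1), one inclusion is obvious because $\pi$ restricts to the identity on $V_1$, while the other is exactly the main claim specialized to $s=0$. For (2), given $v_1,v_2\in\pi(B)=B\cap V_1$, property \eqref{eq05291157} yields
\[
\delta_t(v_1)\cdot\delta_{1-t}(v_2)
= tv_1+(1-t)v_2+\tfrac{t(1-t)}{2}[v_1,v_2]\in B ,
\]
and applying $\pi$ kills the $V_2$-valued bracket term, leaving the whole segment $tv_1+(1-t)v_2$ in $\pi(B)$. The only mildly delicate point I foresee is packaging the iteration so as to cover all of $(0,1]$; everything else reduces to the verification that in step two the BCH corrections disappear, which they do by the centrality of $Z$.
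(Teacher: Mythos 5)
Your argument is correct and is essentially the paper's own proof: the same computation of $\delta_t(p)\cdot\delta_{1-t}(p)=v+(t^2+(1-t)^2)zZ$, the same observation that $t\mapsto t^2+(1-t)^2$ covers $[\tfrac12,1]$, the same iteration-plus-closedness step to reach $s=0$, and the same projection argument for convexity. Your write-up merely spells out the iteration (via $s=2^{-n}$) more explicitly than the paper does.
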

\begin{proof}
 	We have that for all $t\in[0,1]$
	\[
	B \ni \delta_tp\cdot  \delta_{1-t}p 
	= v + (t^2+ (1-t)^2)zZ .
	\]
	Since the image of $[0,1]$ through the map $t\mapsto (t^2+ (1-t)^2)$ is $[\frac12,1]$, then it follows $v+szZ\in B$ for all $s\in[\frac12,1]$.	
	Iterating this process and using the closeness of $B$, we get $v+szZ\in B$ for all $s\in[0,1]$.	
	For the last statement, take $v,w\in\pi(B)\subset B$ and notice that $tv+(1-t)w = \pi(\delta_tv\cdot\delta_{1-t}w) \in \pi(B)$.
\end{proof}

Let $B = \{N\le 1\}$ be the unit ball of a homogeneous norm and set $K:=\pi(B)\subset V_1$ and
$\Omega:=\Int(K)$.
First, we check that $\bar\Omega=K$. 
On the one hand, clearly we have $\bar\Omega\subset K$.
On the other hand, if $v\in K$, then for any $t\in[0,1)$ we have $N(\delta_tv)=tN(v)<1$, i.e., $\delta_tv=tv\in\Int B\cap V\subset\Omega$. Hence $v\in\bar\Omega$.

If we define $f:K\to [0,+\infty)$ as $f(v):=\max\{z:v+zZ\in Q\}$, then we have \eqref{eq08291052}.
In order to prove that $f$ is locally Lipschitz on $\Omega$, we need to prove
\begin{equation}\label{eq05291134}
\begin{array}{c}
 	\forall p\in\de B\cap\{z\ge 0\}\cap\pi^{-1}(\Omega),\\
	\exists U\ni p\text{ open }, \exists C\text{ vertical cone, s.t.}\\ 
	\forall q\in U\cap\de B
	\text{ it holds }q+C\subset B.
\end{array}
\end{equation}
Here a \emph{vertical cone} is a Euclidean cone with axis $-Z$ and non-empty interior.

So, fix $p\in\de Q\cap\{z\ge 0\}$ such that $\pi(p)\in\Omega$.
Define for $\theta\in\R$ and $\epsilon>0$
\[
v_\theta := x_\theta X + y_\theta Y := \epsilon\cos(\theta) X + \epsilon\sin(\theta)Y .
\]
For $\epsilon>0$ small enough, $\pi(p)+v_\theta\in\Omega$ for all $\theta$.
Define
\[
\phi(t,\theta) := \delta_{(1-t)}p\cdot\delta_t(\pi(p)+v_\theta) .
\]
Clearly $\phi(t,\theta)\in B$ for $t\in[0,1]$ and $\theta\in\R$, and $\phi(0,\theta)=p$ for all $\theta$.
Geometrically, $\phi([0,1]\times\R)$ is a ``tent'' inside $B$ standing above the whole vertical segment from $\pi(p)$ to $p$.
Notice that $p\neq\pi(p)$, 
indeed $N(p)=1$ while $N(\pi(p))<1$, because $\pi(p)\in\Omega$.

We only need to prove that the curves $t\mapsto \phi(t,\theta)$ meet this vertical segment by an angle bounded away from 0.
Some computations are needed: set $p=p_1X+p_2Y+p_3Z$, then
\[
\phi(t,\theta) = \pi(p)  + t v_\theta + \left(\frac12 t(1-t) (p_1y_\theta - p_2x_\theta) + (1-t)^2 p_3 \right) Z .
\]
We take care only of the third coordinate. 
Set
\begin{align*}
g(t) :&= \frac12 t(1-t) (p_1y_\theta - p_2x_\theta) + (1-t)^2 p_3  \\
&= t^2\ (-\frac12(p_1y_\theta - p_2x_\theta) + p_3)\ +\ t\ (\frac12(p_1y_\theta - p_2x_\theta) -2p_3)\ +\ p_3 .
\end{align*}
Saying that the angle between the curve $t\mapsto \phi(t,\theta)$ and the vertical segment at $p$ is uniformly grater than zero, is equivalent to give an upper bound to the derivative of $g$ at $0$ for all $\theta$.
Since
\[
g'(0) = \frac12(p_1y_\theta - p_2x_\theta)-2p_3 ,
\]
we are done.

Finally, since both $\epsilon$ and $g'(0)$ depend continuously on $p$, then \eqref{eq05291134} is satisfied.
\qed

\subsection{Proof of Proposition~\ref{prop05291028}}\label{subs07131006}

We consider the bilinear map $\omega:V_1\times V_1\to\R$ given by
\[
\omega(v_1X+v_2Y , w_1X+w_2Y) := v_1w_2-v_2w_1 .
\]


\begin{Lem}\label{lem08291227}
	For any continuous function $f:K\to[0,+\infty)$, 
 	the set $B$ as in \eqref{eq08291052}
	is the unit ball of a homogeneous norm on $\HH$
	if and only if
	\begin{equation}\label{eq05291146}
	\begin{array}{c}
		\forall v,w\in K\quad\forall t\in[0,1]\\
	 	f(tv+(1-t)w) - t^2 f(v) - (1-t)^2f(w) - \frac{t(1-t)}{2}\omega(v,w)\ge 0 .
	\end{array}
	\end{equation}
\end{Lem}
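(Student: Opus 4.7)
The plan is to verify the criterion of Lemma~\ref{lem05282259}: $B$ is the unit ball of a homogeneous distance iff $B$ is compact, contains $0$ in its interior, satisfies $B=B^{-1}$, and enjoys the dilation-convexity property $\delta_t(p)\delta_{1-t}(q)\in B$ for all $p,q\in B$ and $t\in[0,1]$. The first three properties can be dispatched quickly: compactness of $B$ follows from compactness of $K$ and continuity of $f$; since the Heisenberg inverse is $(v+zZ)^{-1}=-v-zZ$, the symmetric form of $B$ in \eqref{eq08291052} immediately gives $B=B^{-1}$; and $0\in\Int(B)$ follows from $0\in\Int(K)$ together with $f(0)>0$ (which is implicit in $B$ being a genuine norm ball). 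All the nontrivial content lies in matching the dilation-convexity property with \eqref{eq05291146}.

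To carry this out, I would take $p=v+z_1Z$, $q=w+z_2Z$ in $B$, so that $v,w\in K$, $z_1\in[-f(-v),f(v)]$, and $z_2\in[-f(-w),f(w)]$, and compute $\delta_t(p)\cdot\delta_{1-t}(q)$ directly from $p\cdot q=p+q+\frac12[p,q]$. Using that $Z$ is central and $[v',w']=\omega(v',w')Z$ for $v',w'\in V_1$, the product has $V_1$-part equal to $tv+(1-t)w$ and $Z$-part equal to $t^2z_1+(1-t)^2z_2+\frac{t(1-t)}{2}\omega(v,w)$. The $V_1$-part lies in $K$ by convexity of $K$, so the remaining requirement is that the $Z$-part lie in the interval $[-f(-(tv+(1-t)w)),\,f(tv+(1-t)w)]$ for every admissible choice of $z_1,z_2$.

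Because the $Z$-coordinate is affine in $(z_1,z_2)$, the extreme cases occur at the corners of the box. The upper bound, taken at $z_1=f(v)$ and $z_2=f(w)$, rearranges to exactly \eqref{eq05291146}. For the lower bound, taken at $z_1=-f(-v)$ and $z_2=-f(-w)$, the resulting inequality transforms, after the substitution $(v,w,t)\mapsto(-w,-v,1-t)$ (a bijection of $K\times K\times[0,1]$ onto itself since $K=-K$) combined with the antisymmetry $\omega(w,v)=-\omega(v,w)$, back into an instance of \eqref{eq05291146}. Hence the lower bound produces no new constraint, and \eqref{eq05291146} is both necessary and sufficient.

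The step that will demand the most care is this last reduction of the lower-bound condition: one must carefully track how the substitution interacts with the antisymmetry of $\omega$ so as to collapse the seemingly independent $+\omega$-inequality arising from the lower bound into an instance of the $-\omega$-inequality \eqref{eq05291146}, rather than producing a strictly stronger constraint. Once this substitution is verified, the rest of the argument is a direct unpacking of the group law and Lemma~\ref{lem05282259}.
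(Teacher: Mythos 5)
Your proposal is correct and follows essentially the same route as the paper: compute $\delta_t(p)\delta_{1-t}(q)$ from the group law, observe that only the $Z$-coordinate is at issue, and match the resulting inequality with \eqref{eq05291146}. The only cosmetic difference is that where you reduce the lower-bound corner $(z_1,z_2)=(-f(-v),-f(-w))$ to \eqref{eq05291146} via the substitution $(v,w,t)\mapsto(-w,-v,1-t)$ and the antisymmetry of $\omega$, the paper splits $B=B^+\cap B^-$ and says the calculation for $B^-$ is ``similar'' --- your substitution is precisely that similarity made explicit.
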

\begin{proof}
	One easily sees that $B=B^{-1}$.
	Notice that $B$ is the unit ball of a homogeneous norm if and only if it satisfies \eqref{eq05291157}.
	
	\ddx
	Assume that $B$ satisfies \eqref{eq05291157}.
	Then for any $v,w\in K$ we have
	\begin{multline*}
	B\ni \delta_t(v+f(v)Z)\cdot\delta_{(1-t)}(w+f(w)Z) = \\
	= t v+(1-t)w +\left(t^2 f(v) +(1-t)^2 f(w) + \frac12 t(1-t)\omega(v,w)\right)Z ,
	\end{multline*}
	hence
	\[
	t^2 f(v) +(1-t)^2 f(w) + \frac12 t(1-t)\omega(v,w) \le f(t v+(1-t)w) .
	\]

	\ssx
	Suppose $f$ satisfies \eqref{eq05291146}.
	Define
	\begin{align*}
	 	B^+ &:=  \{v+zZ: v\in K\text{ and } z \le f(v)\}, \\
		B^- &:=  \{v+zZ: v\in K\text{ and }-f(-v)\le z \} .
	\end{align*}
	We will show that both $B^+$ and $B^-$ satisfy \eqref{eq05291157}, from which it follows that $B=B^+\cap B^-$ satisfies \eqref{eq05291157} as well.
	
	So, let $v,w\in K$ and $z_1,z_2\in\R$ such that $v+z_1Z,w+z_2Z\in B^+$. 
	Then the third coordinate of $\delta_t(v+z_1Z)\cdot\delta_{(1-t)}(w+z_2Z)$ satisfies
	\begin{multline*}
	t^2z_1+(1-t)^2z_2 + \frac12 t(1-t)\omega(v,w) \le \\
	\le t^2f(v)+(1-t)^2f(w) + \frac12 t(1-t)\omega(v,w)
	\le f(t w+(1-t)v),
	\end{multline*}
	therefore we have $\delta_t(v+z_1Z)\cdot\delta_{(1-t)}(w+z_2Z)\in B^+$ for all $t\in[0,1]$.

	The calculation for $B^-$ is similar.
\end{proof}

The verification of the next lemma is simple and therefore it is omitted.
\begin{Lem}\label{lem08291235}
 	Suppose that $g: K\to\R$ is a continuous function such that there is a constant $A\in\R$ with
	\begin{equation}\label{eq1158}
	\begin{array}{c}
	\forall v,w\in K,\ \forall t\in[0,1] \\
	g(tv+(1-t)w) - t^2 g(v) - (1-t)^2g(w) \ge  A t(1-t) .
	\end{array}
	\end{equation}
	Then $f:=g+B$ satisfies \eqref{eq05291146} with
	\[
	B := \sup_{v,w\in K} \frac12\left(\frac12\omega(v,w)-A\right)
	=\frac14\left(\sup_{v,w\in K}\omega(v,w)\right)- \frac12A .
	\]
\end{Lem}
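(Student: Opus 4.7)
The plan is to verify condition \eqref{eq05291146} for $f = g + B$ by a direct algebraic computation. First, I substitute $f = g + B$ into the left-hand side of \eqref{eq05291146}. The constant $B$ contributes
$$B\bigl(1 - t^2 - (1-t)^2\bigr) = 2B\, t(1-t),$$
using the elementary identity $1 - t^2 - (1-t)^2 = 2t(1-t)$. Therefore the left-hand side of \eqref{eq05291146} equals
$$\bigl[g(tv+(1-t)w) - t^2 g(v) - (1-t)^2 g(w)\bigr] + 2B\,t(1-t) - \frac{t(1-t)}{2}\omega(v,w).$$

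Next, I apply the hypothesis \eqref{eq1158} on $g$ to bound the bracketed quantity from below by $A\,t(1-t)$. Since $t(1-t)\ge 0$ on $[0,1]$, factoring it out reduces the problem to showing
$$A + 2B - \frac{1}{2}\omega(v,w) \ge 0 \quad\text{for all } v,w\in K,$$
which is equivalent to
$$B \ge \frac{1}{2}\left(\frac{1}{2}\omega(v,w) - A\right) \quad\text{for all } v,w\in K.$$
This is exactly the defining relation for $B$ as the supremum appearing in the statement, so the inequality holds. The alternative expression $B = \frac{1}{4}\sup_{v,w\in K}\omega(v,w) - \frac{1}{2}A$ then follows by pulling the constants out of the supremum.

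There is no real obstacle here. The only point worth noting is that the supremum defining $B$ is finite, which follows from the compactness of $K$ together with the continuity (bilinearity) of $\omega$ on $V_1\times V_1$. Thus the lemma is essentially an accounting exercise: the constant $B$ is chosen precisely so that the $\omega$-term present in \eqref{eq05291146} but absent from \eqref{eq1158} can be absorbed, which is why the paper omits the verification.
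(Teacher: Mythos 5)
Your proof is correct and is precisely the direct verification that the paper declares ``simple'' and omits: the identity $1-t^2-(1-t)^2=2t(1-t)$ together with the hypothesis \eqref{eq1158} reduces everything to the defining inequality for the constant $B$, whose finiteness follows from compactness of $K$. Nothing further is needed.
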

\begin{Lem}\label{lem08291236}
 	Let $g:K\to\R$ be $L$-Lipschitz. Then $g$ satisfies \eqref{eq1158} for
	\[
	A:=-2L\diam(K) - 4\sup_{p\in K} |g(p)| .
	\]
\end{Lem}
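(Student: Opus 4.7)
\textbf{Proof plan for Lemma~\ref{lem08291236}.}
The plan is to exploit the algebraic identity
\[
t^2 g(v) + (1-t)^2 g(w) = t\,g(v) + (1-t)\,g(w) - t(1-t)\bigl(g(v)+g(w)\bigr),
\]
which follows from $t^2 = t - t(1-t)$ and $(1-t)^2 = (1-t) - t(1-t)$. This reduces the required inequality \eqref{eq1158} to a bound on
\[
g(tv+(1-t)w) - t\,g(v) - (1-t)\,g(w) + t(1-t)\bigl(g(v)+g(w)\bigr).
\]

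First I would estimate the affine-combination defect using only the $L$-Lipschitz hypothesis: writing $z := tv+(1-t)w$, one has $|z-v| = (1-t)|v-w|$ and $|z-w| = t|v-w|$, so
\[
g(z) - t\,g(v) - (1-t)\,g(w)
= t\bigl(g(z)-g(v)\bigr) + (1-t)\bigl(g(z)-g(w)\bigr)
\]
is bounded below by $-2t(1-t) L |v-w| \ge -2t(1-t) L\diam(K)$. Next, the remaining term satisfies $t(1-t)(g(v)+g(w)) \ge -2t(1-t)\sup_{K}|g|$ by the triangle inequality. Adding these two lower bounds produces
\[
g(tv+(1-t)w) - t^2 g(v) - (1-t)^2 g(w) \ge t(1-t)\bigl(-2L\diam(K) - 2\sup_{K}|g|\bigr),
\]
which is in fact a bit sharper than the stated constant, and hence suffices for the value of $A$ given in the statement.

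There is no real obstacle here: the only non-obvious step is recognizing the decomposition of $t^2 g(v)+(1-t)^2 g(w)$ into an affine part plus a symmetric $t(1-t)$ correction, after which the two summands are handled separately by the Lipschitz bound and by the supremum bound. The factor $4$ in $A$ rather than $2$ simply reflects that the authors did not attempt to optimize the constant.
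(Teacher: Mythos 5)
Your proof is correct, and every step checks out: the identity $t^2=t-t(1-t)$, $(1-t)^2=(1-t)-t(1-t)$ is right, the Lipschitz bound on $t(g(z)-g(v))+(1-t)(g(z)-g(w))$ gives $-2t(1-t)L\,\diam(K)$, and the sup bound gives $-2t(1-t)\sup_K|g|$, so you obtain \eqref{eq1158} with the sharper constant $-2L\diam(K)-2\sup_K|g|\ge A$. The paper's proof reaches the same conclusion by a slightly different, asymmetric route: it anchors the expansion at $w$ alone, writing the left-hand side of \eqref{eq1158} divided by $t(1-t)$ as $\frac{g(w+t(v-w))-g(w)}{t(1-t)}+\frac{2-t}{1-t}g(w)-\frac{t}{1-t}g(v)$, and then uses the symmetry of \eqref{eq1158} under $t\leftrightarrow 1-t$ to restrict to $t\in(0,\tfrac12]$, where $\frac1{1-t}\le 2$; this is what produces the factor $4$ in front of $\sup_K|g|$. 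Your symmetric decomposition into an affine part plus a $t(1-t)$ correction avoids the case reduction entirely and yields a better constant, at the cost of one extra algebraic identity up front; since only the existence of some finite $A$ matters for Lemma~\ref{lem08291235} and Proposition~\ref{prop05291028}, either constant is equally serviceable.
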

\begin{proof}
 	Notice that we need to show that \eqref{eq1158} holds only for $t\in(0,1)$ and that \eqref{eq1158} is symmetric in $t$ and $(1-t)$.
	So, it is enough to consider only the case $t\in(0,\frac12]$:
	\begin{multline*}
	 	\frac{ g(tv+(1-t)w) - t^2 g(v) - (1-t)^2g(w) }{ t(1-t) } \\
		= \frac{ g(w+t(v-w)) - g(w) }{ t(1-t) } + \frac{ (1-(1-t)^2) g(w) }{ t(1-t) } - \frac{t}{1-t} g(v) \\ 
		\ge - \frac{L\|v-w\|}{1-t} + \frac{2-t}{1-t} g(w) - \frac{t}{1-t} g(v) \\ 
		\ge -2L\diam(K) - 4\sup_{p\in K} |g(p)| .
	\end{multline*}
\end{proof}
Putting together Lemmas \ref{lem08291236}, \ref{lem08291235}, and \ref{lem08291227}, we get Proposition~\ref{prop05291028}.
\qed


\printbibliography
\end{document}